\newcommand{\fcal}{\mathcal{F}}
\newcommand{\p}{\mathbb{P}}
\newcommand{\tp}{\tilde{\mathbb{P}}}
\newcommand{\loc}{\mathrm{loc}}
\newcommand{\tOmega}{\tilde{\Omega}}
\newcommand{\tfcal}{\tilde{\fcal}}
\newcommand{\tW}{\tilde{W}}
\newcommand{\tu}{\tilde{u}}
\newcommand\law[2]{\mathrm{Law}_{{#1}}(#2)}
\newcommand\newW{\mathbf{W}}
\newcommand\Yg{Y_2}
\newcommand\Yb{Y_1}
\newcommand\dela[1]{}
\newcommand\embed{\hookrightarrow}
\newcommand{\newr}{r}
\newcommand{\news}{s}
\newtheorem{Satz}{Theorem}[section]
\newtheorem{theorem}[Satz]{Theorem}
\newtheorem{lemma}[Satz]{Lemma}		
\newtheorem{corollary}[Satz]{Corollary}	
\newtheorem{proposition}[Satz]{Proposition}	
\numberwithin{equation}{section}
\theoremstyle{definition}
\newtheorem{definition}[Satz]{Definition}
\newtheorem{remark}[Satz]{Remark}
\newtheorem{assumption}[Satz]{Assumption}
\newtheorem{example}[Satz]{Example}
\newcommand{\C}{\mathbb{C}} 
\newcommand{\R}{\mathbb{R}} 
\newcommand{\Rd}{{\mathbb{R}^d}} 
\newcommand{\N}{\mathbb{N}} 
\newcommand{\E}{\mathbb{E}}
\newcommand{\Prob}{\mathbb{P}}
\newcommand{\F}{\mathscr{F}}
\newcommand{\Filtration}{\mathbb{F}}
\newcommand{\tildeProb}{\tilde{\Prob}}
\newcommand{\energy}{{\mathscr E}}
\newcommand{\EA}{{V}}
\newcommand{\EAdual}{{V^\ast}}
\newcommand{\LzweiTimeSum}{{L^2([s,t]\times \N)}}
\newcommand{\LalphaPlusEins}{{L^{\alpha+1}}}
\newcommand{\LalphaPlusEinsDual}{{L^{\frac{\alpha+1}{\alpha}}}}
\newcommand{\D}{\mathscr{D}}
\newcommand{\df }{\,\mathrm{d}}
\newcommand{\im }{\mathrm{i}}
\newcommand{\sumM }{\sum_{m=1}^{\infty}}
\newcommand{\Real}{\operatorname{Re}}
\newcommand{\skpH}[2]{\big(#1,#2\big)_{H}}
\newcommand{\skp}[2]{\big(#1,#2\big)}
\newcommand{\skpHReal}[2]{\Real \big(#1,#2\big)_{H}}
\newcommand{\sqrtA}{A^{\frac{1}{2}}}
\newcommand{\norm}[1]{\Vert #1 \Vert}
\newcommand{\quadVar}[1]{\langle \langle #1 \rangle \rangle}
\newcommand{\duality}[2]{\langle #1, #2 \rangle}
\newcommand{\dualityReal}[2]{\Real \langle #1, #2 \rangle}
\newcommand{\Fhat}{\hat{F}}
\newcommand{\Addresses}{{
		\bigskip
		\footnotesize
		
		Zdzis{\l}aw ~Brze{\'z}niak, Department of Mathematics, University of York,
			Heslington, York, YO105DD, UK\par\nopagebreak
		\textit{E-mail address}: zdzislaw.brzezniak@york.ac.uk
		\medskip
		
		Benedetta Ferrario, Dipartimento di Scienze Economiche e Aziendali, Universit\`a di Pavia, 27100 Pavia, Italy  \par\nopagebreak
		\textit{E-mail address}: benedetta.ferrario@unipv.it
		
		\medskip
		
Margherita Zanella, Dipartimento di Matematica "Francesco Brioschi", Politecnico di Milano, Via Bonardi 13, 20133 Milano, Italy \par\nopagebreak
		\textit{E-mail address}: margherita.zanella@polimi.it	
	}}
\title[Invariant measures for stochastic damped  Schr\"odinger equation]{Invariant measures for a stochastic nonlinear and damped  2D Schr\"odinger equation}
\author[Z. Brze{\'z}niak, B. Ferrario and M. Zanella ]{Zdzis{\l}aw Brze{\'z}niak, Benedetta Ferrario and Margherita Zanella	}
\date{\today}
\begin{document}

\begin{abstract}
We consider a stochastic nonlinear defocusing Schr\"{o}dinger equation  with zero-order linear damping,
where the stochastic forcing term is given by  a combination of a  linear multiplicative noise in the Stratonovich form and a
nonlinear noise in the It\^o form.
We work at the same time on compact Riemannian manifolds without boundary and on relatively compact  smooth domains
with either the Dirichlet or the Neumann boundary conditions,  always in dimension two. We construct a martingale solution using a
modified Faedo-Galerkin's method, following \cite{Brz+H+W-2019}. Then by means of the Strichartz estimates deduced  from
\cite{Blair+Sogge_2008} but modified for our stochastic setting we show the pathwise
uniqueness of solutions.
Finally, we prove  the existence of an invariant measure by means of a version of the Krylov-Bogoliubov method,
which involves the weak topology, as proposed by Maslowski and Seidler \cite{Mas-Seid}. This is the first result of this type
for stochastic NLS on compact Riemannian manifolds without boundary and on relatively compact  smooth domains even for an additive noise.
Some remarks on the uniqueness in a particular case are provided as well.
\end{abstract}

\maketitle

\keywords{\textbf{Keywords:} Nonlinear Schr\"{o}dinger equation, multiplicative noise, Galerkin approximation, compactness method, pathwise uniqueness, sequential weak Feller, tightness, invariant measure.}

\tableofcontents

\section{Introduction}

Let us consider the following nonlinear damped stochastic Schr\"{o}dinger equation
\begin{align}
\label{eqn-ProblemStratonovich}
\df u(t)&= -\left[\im A u(t)+\im  F(u(t)) +\beta u(t) \right] dt  -\im B u(t) \circ \df W(t)
\\
\nonumber
&\hspace{6.2truecm}-\im G(u(t)) \,\df \newW(t) ,\qquad t> 0
\end{align}
Here $A$ is a linear self-adjoint  non-negative operator,
$\beta$  is a damping constant (usually considered not negative),
$B$ is a linear bounded operator; $F$  and $G$  are nonlinear terms. Moreover
$W$ and $\newW$  are two independent  Wiener processes;
the first  stochastic differential is in the Stratonovich form and the other one is  in the It\^o form.

A basic example of the operator  $A$ is the negative Laplace-Beltrami operator $-\Delta_g$ on a compact Riemannian
manifold $(M,g)$ without boundary; this appears in some previous papers on the nonlinear 
Schr\"{o}dinger
equation, see for instance \cite{Burq+G+T_2004}, \cite{BrzezniakStrichartz}, \cite{Brz+H+W-2019}.
However, we can deal as well with  the negative Laplacian $-\Delta$ on a
relatively compact  smooth domain in  $\Rd$ with Neumann or Dirichlet boundary conditions.
We will consider $F$ to be a  power-type defocusing nonlinearity.

The nonlinear Schr\"odinger equation  occurs as a basic model in many areas of physics:
hydrodinamics, plasma physics, nonlinear optics, molecular biology. It describes the propagation of waves media with both
nonlinear and dispersive responses, see, e.g.,  \cite{SulSul} where many physical models are discussed.
A lot of attention has been paid
recently to the influence of a noise on the dynamics described by the equation:
the noise acts as a random potential to incorporate spatial and temporal fluctuations of certain parameters in a physical
model. Typically the noise depends on the solution itself and, according to the physical situation one aims at describing,
the It\^o or the Stratonovich forms of the noise can be taken into account.
These different types of stochastic differential lead in fact to different properties of solutions.
For instance in the multiplicative noise model the mass is preserved only
when the noise is taken in the Stratonovich sense of a particular type.
This provides the same property as for  the deterministic equation.
However, with  both  a Stratonovich and  an It\^o noise, the energy is not conserved anymore in general.
For this reason, when addressing the problem of the existence of an invariant measure, some dissipative terms (e.g.: our model with $\beta>0$) are usually involved in the stochastic case.

The question of the existence and/or the uniqueness of solutions for nonlinear Schr\"{o}dinger equations  with additive or linear
multiplicative noise was previously addressed
in the $\Rd$-case by De Bouard and Debussche \cite{BouardLzwei, BouardHeins},
Barbu, R\"{o}ckner and Zhang \cite{BarbuL2, BarbuH1, Barbu17},
Cui, Hong and Sun in \cite{Cui}
and Hornung  \cite{Hornung-F_PhD} and \cite{Hornung_2018}. In the case of compact two dimensional Riemannian manifolds there are results
 by Brze{\'{z}}niak and Millet \cite{BrzezniakStrichartz} and by  Brze{\'{z}}niak, Hornung and Weis
\cite{Brz+H+W-2019}.

So far  the existence  of an invariant measure has been obtained for this equation with  a damping term
and an additive It\^o  noise, i.e.  $\beta>0$, $B=0$ and $G$ constant in equation \eqref{eqn-ProblemStratonovich}.
In this framework, in the papers by Kim \cite{Kim_2006} and by Ekren,  Kukavica and  Ziane \cite{Ekren_2017} the result is proved in the full space $\mathbb{R}^d$, $d\ge 1$. Debussche and Odasso \cite{Odasso} obtain instead the result on a bounded one-dimensional domain (dealing with the cubic
focusing Schr\"odinger equation)  and solved  the corresponding uniqueness problem too.
The recent paper \cite{noi2} proves the uniqueness of the invariant measure in the large damping regime in $\mathbb{R}^d$ for $d=2,3$.
Some numerical approximations of invariant measures can be found in the book by Hong and Wang \cite{Hong+Wang_2019_invariant}.
The aim of our paper is to generalize the previously cited papers by considering a more general stochastic forcing term: we consider
 a linear multiplicative Stratonovich noise $B$, which conserves the $H$-norm,  and a nonlinear It\^o noise $G$.
When we reduce to the case of a pure additive noise we get the existence of an invariant measure when $\beta>0$. This finding is in line with \cite{Kim_2006} and \cite{Ekren_2017} that obtain the same conclusions working on the full space. We emphasize here that, as far as we know, our result is the first one providing the existence of an invariant measure in the case of a two-dimensional compact Riemannian manifold without boundary and of a relatively compact  smooth domain
with either the Dirichlet or the Neumann boundary conditions.

The paper is structured as follows. In the first part of the paper we shall construct a martingale solution of problem $\eqref{eqn-ProblemStratonovich}$
in the energy space $\EA=\D(\sqrtA)$ by a modified
Faedo-Galerkin's approximation, following the lines
of a previous paper by the first author and collaborators \cite{Brz+H+W-2019}.  However here we generalize that  setting by   dealing with a random initial data and more general diffusion terms. One should mention here that a very recent  paper
\cite{Hornung_2020} provides another generalization \cite{Brz+H+W-2019} in the direction of stochastic NSL equations on unbounded domains and non-compact manifolds.

In the second part of the paper we shall prove the pathwise uniqueness of the solutions.
Hence the existence and the uniqueness of a strong solution will follow.
This result is based on further regularity properties of the martingale solutions,
which are obtained by means of the Strichartz estimates in dimension two. Although the proof of our existence and uniqueness result follows the lines of the proof of the analogous
 results in \cite{Brz+H+W-2019}, we emphasize here that we allow the initial data to be random.

 As far as the existence of an invariant measure is concerned, in the last  part of the paper  we proceed differently from  \cite{Kim_2006}  and
\cite{Ekren_2017}. Following the proof of the existence of a martingale solution  we prove that the corresponding Markov semigroup is
sequential weak Feller in the energy space  $\EA$.
Moreover we show a tightness result in the space $\EA$ equipped with the weak topology, when the damping coefficient is
sufficiently large. In this case  a new condition involving the strengthes  of the two noises will appear.
With these two latter properties we prove the existence of at least one invariant measure, by means of the method introduced
 by Maslowski and Seidler \cite{Mas-Seid}, as a version of the classical Krylov-Bogoliubov technique reset with weak
 topologies. This method has been successful to prove existence of invariant measures for other SPDE's, as the stochastic
 nonlinear beam and wave equations \cite{BOndSeid}, the Navier-Stokes equations in unbounded domains
 \cite{BMO, Brz+Ferr_2019}, the
stochastic Landau-Lifshitz-Bloch equation \cite{BGL}, the stochastic damped Euler equation \cite{BesFer}.

The paper is organized as follows.
In Section \ref{s-2} we present notations and main assumptions. In Section \ref{Section3MR} we state our main results. In Section
\ref{CompactnessSection} we collect some compactness results. Section \ref{sec-exis} deals with the existence of
martingale solutions. Pathwise uniqueness is proved in Section \ref{sec-UniquenessSection}.
The two last sections \ref{sec-weakCont} and \ref{s-inv} are concerned with the sequential weak  Feller property  and
the existence of invariant measures. In Section \ref{sect-unique}  we consider the particular case of multiplicative noise,
where there is also uniqueness of the invariant measure.
In Appendix \ref{app_Lapl} we recall some facts about Laplacian-type operators on manifolds and on bounded domains
with Dirichlet/Neumann boundary conditions and derive the needed Strichartz estimates; this is very different from the
setting in $\mathbb R^d$ considered in many papers. In Appendices \ref{App_B} and \ref{App_C} we collect the proofs of
some results.
 In Appendix \ref{sec-Yamada-Watanabe Theorem} we present the infinite dimensional version of the Yamada-Watanabe Theorem. In Section \ref{Technical_Lemma} we prove a technical lemma that we need in Section \ref{CompactnessSection}.

\subsection*{Acknowledgments}
B.F. and M.Z. thank GNAMPA-INdAM for financial support.
The authors thank the Hausdorff Institute for Mathematics in Bonn, where this work was started, for the kind hospitality.
The authors are grateful to Fabian Hornung for numerous useful comments and discussions. They also would like to thank Markus Kunze and Martin Ondrej\'at on discussion about their papers on the Yamada-Watanabe Theorem. Finally the authors would like to thank an anonymous referee for numerous comments and questions which have lead to an improved and clarified presentation.

\section{Mathematical setting and assumptions}  \label{s-2}

In this Section, we fix the notation, explain the assumptions and formulate the framework for our problem.
Let $\left(X,\Sigma,\mu_X\right)$ be a $\sigma$-finite measure space endowed with the metric $\rho$ such that the corresponding Borel $\sigma$-field
$\mathscr{B}(X)$ is contained in $\Sigma$. Let $D \subseteq X$.
Our framework covers the following cases
\begin{footnote}{From now on we will denote by $\mathscr{O}$ a subset of $\mathbb{R}^2$ and by $M$ a two-dimensional manifold. We will use the letter $D$ when we need to deal with the two cases above at the same time.
}\end{footnote}
:
\begin{itemize}
\item $X=\mathbb{R}^2$ with the Euclidean distance $\rho$ and the Lebesgue measure $\mu_X$ and $D=\mathscr{O}$ is a  relatively compact  smooth, i.e. with $C^{\infty}$ boundary, domain of $\mathbb{R}^2$.
\item $D=X\equiv M$ is a two-dimensional compact Riemaniann manifold without boundary with the geodesic distance $\rho$ and the canonical volume measure $\mu_X$ on $X$.
\end{itemize}

By $L^q(D)$, for $q\in [1,\infty]$, we denote the space of equivalence classes of $\mathbb{C}$-valued $q-$Lebesgue integrable functions.
We abbreviate $L^q:=L^q(D)$. For $q\in [1,\infty],$ let  $q':=\frac{q}{q-1}\in [1,\infty]$ be the conjugate exponent. We further abbreviate ${H}:=L^2$.
This is a complex Hilbert space with inner product $(u,v)_H=\int_D u \overline{v} \,{\rm d}\mu_X$.
However,  we often interpret $H$ as  a real Hilbert space with the inner product $\Real(u,v)_H$.
They are different but  in one-to-one correspondence: $(u,v)_H=\Real (u,v)_H+i\Real (u,iv)_H$.
These inner products introduce the same norms and hence both spaces are topologically equivalent.

By $H^{s,q}(D)$ we denote the fractional Sobolev space of regularity $s\in \R$ and integrability $q\in (1,\infty)$. We abbreviate $H^{s,q}:=H^{s,q}(D)$ and we shortly write $H^s:=H^{s,2}$.
For a definition of these spaces see Appendix \ref{app_Lapl}.

In the sequel, given two Banach spaces $E$ and $F$, we denote by $\mathscr{L}(E,F)$ the space of all linear bounded
operators $B: E\to F$ and abbreviate $\mathscr{L}(E):=\mathscr{L}(E,E).$  Furthermore, we write $E\hookrightarrow F$, if
$E$ is continuously embedded in $F$, i.e. $E\subset F$ with natural embedding  $j\in \mathscr{L}(E,F)$.
For a Hilbert space $H$ and a Banach space $E$, $\gamma(H,E)$ denotes the spaces of $\gamma$-radonifying operators from $H$ to $E$. If $E$ is a Hilbert space, this is indeed the space of  Hilbert-Schmidt operators from $H$ to $E$.
The space $C^{1,2}([0,T]\times E,F)$ consists of all functions $\varPhi\colon [0,T]\times E\to F$ such that
$\varPhi(\cdot,x)\in C^1([0,T],F)$ for every $x\in E$ and $\varPhi(t,\cdot)\in C^2(E,F)$ for every $t\in[0,T]$.
Given the Hilbert space $H$, $C_w([0,T];H)$ stands for  the space
of all continuous functions from the interval $[0,T]$ to  the space $H$ endowed with the weak topology.

If functions  $a,b\ge 0$ satisfy the inequality $a \le C(A) b$ with a constant $C(A)>0$ depending on the expression $A$, we write $a \lesssim_A b$; for a generic constant we put no subscript.
 If we have $a \lesssim_A b$ and $b \lesssim_A a$, we write $a \simeq_A  b$.

\subsection{Assumptions on the operator $A$}

\begin{assumption}
\label{assumption-A-space}
The operator $A$ that appears in equation \eqref{eqn-ProblemStratonovich} is a Laplacian-type operator. We consider
 $A$ to be as one of the following:
\begin{itemize}
\item [i)] the negative Laplace-Beltrami operator $-\Delta_g$ on a compact two-dimensional Riemannian manifold
$(M,g)$ without boundary, equipped with a Lipschitz metric $g$; in this case $\mu_X$ is the canonical volume measure;
\item [ii)] the negative Laplacian with Dirichlet boundary conditions $-\Delta_D$ on a smooth,
i.e. $C^{\infty}$, relatively compact  domain $\mathscr{O}$ of $\mathbb{R}^2$;
\item [iii)] the negative Laplacian with Neumann boundary conditions $-\Delta_N$ on a smooth,
i.e. $C^{\infty}$, relatively compact  domain $\mathscr{O}$ of $\mathbb{R}^2$.
\end{itemize}
\end{assumption}

Some classical results, see e.g. \cite{Ouh_2009},  ensure that the operator $A$, in any of the forms given in Assumption \ref{assumption-A-space}, is a non-negative self-adjoint operator on $H$. We denote by $\D(A)$ its domain. We set $\EA:=\D(\sqrtA)$ and note that $V$  is a Hilbert space when equipped with the inner product
		\begin{align*}
		\skp{u}{v}_{\EA}:=\skpH{u}{v}+\skpH{\sqrtA u}{\sqrtA v},\qquad u,v\in \EA.
		\end{align*}
We call it the \emph{energy space} and we call the induced norm $\norm{\cdot}_{\EA}$ the  \emph{energy norm} associated to $A$.
For a characterization of the energy spaces associated to the operators that appear in Assumption \ref{assumption-A-space},
see Remark \ref{B4} and Proposition \ref{propB6}(i). We denote the dual space of $\EA$ by $\EAdual$ and abbreviate the
duality with $\duality{\cdot}{\cdot}$,
where the complex conjugation is taken over the second variable of the duality. Note that $\left(\EA, H, \EAdual\right)$ is a
Gelfand triple, i.e. \begin{equation}\label{eqn-Gelfand triple}
\EA\hookrightarrow H \cong H^\ast \hookrightarrow \EAdual.
\end{equation}

Notice that, thanks to the geometry of the domain $D$, the condition $\alpha \in (1, \infty)$ ensures that the embedding
$
\label{eqn-V to L^ alpha+1} V \subset L^{\alpha+1}$
 is  compact (and hence bounded/continuous). Hence, since $(\LalphaPlusEins)^\ast=\LalphaPlusEinsDual$, we can extend the $\EA-\EAdual$  duality $\duality{\cdot}{\cdot}$
to the couple $\LalphaPlusEins-\LalphaPlusEinsDual$.
\\
Let us point out that we also have the compact (and hence bounded/continuous) embedding
 \begin{equation}\label{eqn-V to H} V \subset H.
 \end{equation}

It can be proved, see e.g. \cite[Lemma 2.3(a)]{Brz+H+W-2019}, that there exists a non-negative self-adjoint operator $\hat{A}$
on $\EAdual$ with $\D(\hat{A})=\EA$ and $\hat{A}=A$ on $\D(A)$.
In most cases where this does not cause ambiguity or confusion, we also use the notation $A$ for $\hat{A}$.

In the following Lemma we introduce the operator $S$ and state some properties of it. $S$ will play the role of an auxiliary operator to cover the different cases we consider in an unified framework.
\begin{lemma}
\label {S_properties}
Given $A$ as in Assumption \ref{assumption-A-space}, there exists an operator $S$ on $H$ such that
\begin{itemize}
\item [i)]$S$ is strictly positive and self-adjoint. $S$ commutes with $A$ and   satisfies
		$\D(S^k)\hookrightarrow \EA$
		for sufficiently large $k.$ Moreover, $S$
			satisfies the upper Gaussian estimate i.e. for all $t >0$ there is a measurable function $p(t, \cdot, \cdot):D \times D \rightarrow \mathbb{R}$ with
	\begin{equation*}
\left(e^{-tS}f\right)(x)= \int_Dp(t,x,y)f(y)\, \mu_X({\rm d}y), \quad t >0, \quad \text{a.e.} \ x \in D,
	\end{equation*}
for all $f \in H$ and with constants $c,C>0$ and $m \ge 2$
\begin{equation}
\label{eqn_Gauss_est}
|p(t,x,y)| \le \frac{C}{\mu_X(B(x, t^{\frac 1m})}
\exp\left(-c \left(\frac{\rho(x,y)^m}{t} \right)^{\frac{1}{m-1}} \right),
\end{equation}
for all $t>0$ and a.e. $(x,y) \in D \times D$.
\item [ii)] $S$ has compact resolvent. In particular, there is an orthonormal basis $\left(h_n\right)_{n\in \N}$ of $H$ and a nondecreasing sequence $\left(\lambda_n\right)_{n\in\N}$ with $\lambda_n>0$ and  $\lambda_n\to \infty$ as $n\to \infty$ such that
		\begin{align}
		\label{eqn_spectral_S}
		S x=\sum_{n=1}^\infty \lambda_n \skpH{x}{h_n} h_n, \quad x\in \D(S)=\left\{x\in H: \sum_{n=1}^\infty \lambda_n^2 \vert \skpH{x}{h_n}\vert^2<\infty\right\}.
		\end{align}
\end{itemize}
\end{lemma}
\begin{proof}
For $A=-\Delta_g$ we choose $S:=I-\Delta_g$, for $A=-\Delta_D$ we choose $S=A=-\Delta_D$, for $A=-\Delta_N$ we fix $\varepsilon>0$ and choose $S=\varepsilon I-\Delta_N$. For these choices of $S$ all the  statements of the Lemma are verified: for a proof see \cite[Sections 3.2 - 3.3, Remark 2.2(b) and Lemma 2.3(c)]{Brz+H+W-2019} and the therein references.
\end{proof}

\begin{remark}
The operator $S$ plays a crucial role in the construction of our Galerkin approximations.
The Gaussian estimate \eqref{eqn_Gauss_est} is used in the proof of Proposition \ref{PaleyLittlewoodLemma} where spectral multiplier theorems for $S$ are employed (for further details one can consult \cite[Chapter 7]{Ouh_2009}).
\end{remark}

\subsection{Assumptions on the nonlinear term $F$}
We continue with the assumptions on the nonlinear term $F$ of our problem. We deal with power-type defocusing
nonlinearities.

\begin{assumption}
\label{assumption-F_def}
Assume that $\alpha \in (1, \infty)$ and set
\begin{equation*}
F(u):=|u|^{\alpha-1}u.
\end{equation*}
\end{assumption}

The function $F$ satisfies a set of properties that we summarize in the following Lemma (for a proof see e.g. \cite[Proposition 3.25 and Remark 3.16]{Cazenave} and \cite[Proposition 3.1]{Brz+H+W-2019}).
It is important to recall that the embedding $V \subset L^{\alpha+1}$ is  compact (and hence bounded/continuous).
Therefore we have
\begin{equation}\label{eqn-Gelfand triple}
\EA \embed   \LalphaPlusEins \embed L^2 \equiv (L^2)^\ast \embed \LalphaPlusEinsDual \embed \EAdual,
\end{equation}
where the first and the last embeddings are compact, while all other embeddings are simply continuous.

\begin{lemma}\label{lem-nonlinearAssumptions}
	Let $\alpha \in (1, \infty)$.
	\begin{itemize}
	\item[i)] The map $F:L^{\alpha +1} \to \LalphaPlusEinsDual$ satisfies,   for any $ u,v\in \LalphaPlusEins$
		\begin{align}\label{eqn_nonlinearityEstimate}
		\norm{F(u)}_\LalphaPlusEinsDual = \norm{u}_\LalphaPlusEins^\alpha,
		\end{align}
		\begin{align}\label{eqn_nonlinearityLocallyLipschitz}
                 \norm{  F(u)-F(v)}_{\LalphaPlusEinsDual}
                &\lesssim
                \left(\norm{u}_\LalphaPlusEins+\norm{v}_\LalphaPlusEins\right)^{\alpha-1} \norm{u-v}_\LalphaPlusEins.
                \end{align}	
      	Moreover, $F: \EA \to \EAdual$, $F(0)=0$ and
                \begin{align}\label{eqn_nonlinearityComplex}
		\Real \duality{\im u}{F(u)}=0, \quad u\in \LalphaPlusEins 	,	\end{align}
		\begin{align}\label{ineq-dissipativity}
		 \duality{F(u)}{u} = \norm{u}_\LalphaPlusEins^{\alpha+1} ,\quad u\in \LalphaPlusEins.
		\end{align}
	\item[ii)] The map $F: \LalphaPlusEins\to \LalphaPlusEinsDual$ is continuously 
	 Fr\'{e}chet differentiable with
		\begin{align}\label{eqn_deriveNonlinearBound}
		\Vert F^{\prime}[u]\Vert_{L^{\alpha+1}\to L^\frac{\alpha+1}{\alpha}} \le \alpha \norm{u}
		 _\LalphaPlusEins^{\alpha-1}, \quad u\in \LalphaPlusEins.
		\end{align}
	\item[iii)]The map $F$ is defocusing, that is it admits the real non-negative antiderivative
	\begin{footnote}
	{
	We recall that, if there exists a Fr\'echet differentiable map $\hat F:L^{\alpha+1} \rightarrow \mathbb{R}$ with $\hat F'[u]h=\text{Re}\langle F(u),h\rangle$, for every $u,h \in L^{\alpha+1}$, $\hat F$ is called the antiderivative of $F$.
	}
	\end{footnote}$\hat{F}: \LalphaPlusEins\to \R$ given by
\begin{align}
\label{eqn_boundantiderivative}\hat F(u)=\frac 1{\alpha+1}\|u\|_{L^{\alpha+1}}^{\alpha+1}.
\end{align}			
\end{itemize}
\end{lemma}

\begin{remark}
\label{remnew}
It follows from \eqref{eqn_nonlinearityEstimate} and  \eqref{eqn_boundantiderivative}  that
		\begin{align}\label{ineq-F}
		\norm{F(u)}_\LalphaPlusEinsDual
		=\norm{u}_\LalphaPlusEins^\alpha
		\lesssim[\Fhat(u)]^{\frac{\alpha}{\alpha+1}}, \; \quad u\in \LalphaPlusEins.
		\end{align}
Moreover, it follows from \eqref{eqn_deriveNonlinearBound} and  \eqref{eqn_boundantiderivative} that
		\begin{align}\label{ineq-Fp}
		\|F'[u]\|_{L^{\alpha+1}\rightarrow L^{\frac{\alpha+1}{\alpha}}} \le\alpha\|u\|^{\alpha-1}_{L^{\alpha+1}}
		= \alpha (\alpha+1)^{\frac{\alpha-1}{\alpha+1}} [\hat F(u)]^{\frac{\alpha-1}{\alpha+1}} .
		\end{align}
\end{remark}


\subsection{Assumptions on the stochastic terms}

For a probability space $\left(\Omega, \F, \Prob\right)$ and a measurable space $(E,\mathscr{E})$, the law of a random variable $\xi: \Omega \to E$ will be denoted by $\law{\Prob}{\xi}$.  

\begin{assumption}\label{assumption-stochastic}
	We assume the following.
	\begin{itemize}
		\item[i)] Let $(\Omega,\F,\Prob,\Filtration)$, where $\Filtration=\bigl(\mathscr{F}_t\bigr)_{t\geq 0}$,  be a  filtered probability space satisfying the usual conditions, $\Yb$ and $\Yg$ two separable real Hilbert spaces, with orthonormal bases  $(f_m)_{m\in\N}$ and $(e_m)_{m \in \mathbb{N}}$ respectively, and $W$ and
		$\newW$ two independent, $\Yb$  respectively
		$\Yg$, canonical cylindrical $\Filtration$-Wiener processes.
		 		\item[ii)] 		Let $B: {H} \to \gamma(\Yb,{H})$ be a linear
		operator and set $B_m u:=B(u)f_m$ for $u\in {H}$ and $m\in \N.$ Additionally, we assume that
		$B_m\in\mathscr{L}(H)$ is self-adjoint for every $m\in\N$ and the following stronger assumption, needed to make sense of the Stratonovich correction terms,
is satisfied
		\begin{align}
  \label{noiseBoundsH}
		\sumM \norm{B_m}_{\mathscr{L}({H})}^2<\infty.
		\end{align}
Moreover we assume that $B_m\in \mathscr{L}(\EA)$ and  $B_m\in \mathscr{L}(\LalphaPlusEins)$ for all  $m\in\N$  and
		\begin{align}\label{noiseBoundsEnergy}
		&\sumM\norm{B_m}_{\mathscr{L}(\EA)}^2<\infty,\\
\label{noiseBounds-L^alpha}
		&\sumM \norm{B_m}_{\mathscr{L}(L^{\alpha+1})}^2<\infty.
		\end{align}	
\item[(iii)]
Let $G:{H} \to \gamma(\Yg,{H})$ be Lipschitz continuous,  i.e.
 \begin{equation}
 \label{Lipschitz_G}
\exists \ L_G >0 : \quad  \|G(u_1)-G(u_2)\|_{\gamma(\Yg,H)} \le L_G\|u_1-u_2\|_H \qquad \forall u_1, u_2 \in H.
\end{equation}
Moreover the following  ``restrictions'' of $G$, i.e.
\begin{align*}
G: \EA \to \gamma(\Yg,\EA) \mbox{ and } G: \LalphaPlusEins \to \gamma(\Yg,\LalphaPlusEins)\end{align*}
are measurable, see \cite[Section 2]{Brz+Rana_2021} for a reasonably thorough discussion of this issue,  and of at most  linear growth, i.e.
  for some non negative constants $C_2, \tilde C_2, C_3, \tilde C_3$, the following inequalities hold
 \begin{equation}\label{crescitaGEA}
   \|G(u)\|_{\gamma(\Yg,\EA)} \le C_2+\tilde C_2\|u\|_{\EA}\qquad \forall u \in \EA,
\end{equation}
and
 \begin{equation}\label{crescitaGL}
\|G(u)\|_{\gamma(\Yg, \LalphaPlusEins )} \le C_3+\tilde C_3\|u\|_{ \LalphaPlusEins }\qquad \forall u \in  \LalphaPlusEins.
\end{equation}
 \newline
Finally, we also assume the following weak continuity assumption of the diffusion coefficient $G$:
for every $m \in \mathbb{N}$ the map
\[ H \ni \varphi \mapsto G(\varphi)e_m \in  H \]
extends uniquely to a continuous  map from $V^\ast$ to $V^\ast$, i.e.

\begin{equation}
 \label{cont_V^*}
 \text{the map} \ V^\ast \ni \varphi \mapsto G(\varphi)e_m \in V^\ast \ \text{is continuous, }  m \in \mathbb{N}.
 \end{equation}
\end{itemize}

\end{assumption}

\begin{remark}\label{Rem-linear growth} It is well known that the Lipschitz assumption \eqref{Lipschitz_G} implies the following linear growth condition. There exist positive constants $C_1, \tilde C_1$ such that
\begin{equation}\label{eqn-G crescita}
\|G(u)\|_{\gamma(\Yg,H)} \le C_1+\tilde C_1\|u\|_H\qquad \forall u \in H.
\end{equation}
 We mention this obvious fact since  we explicitly use this estimate \eqref{eqn-G crescita} in  many  computations in the following sections.
\end{remark}

\begin{remark}
By assumption  \ref{assumption-stochastic} i), we can represent the Wiener processes as
\[
W(t)=\sumM f_m \beta_m(t) \qquad\qquad \newW(t)=\sumM e_m {\bm\beta}_m(t)
\]
for two sequences of independent standard real Wiener processes $\{\beta_m\}_m$ and $\{\bm\beta_m\}_m$.
\end{remark}
\begin{remark}\label{rem-HS}
	The estimates $\eqref{noiseBoundsH}$, $\eqref{noiseBoundsEnergy}$ and \eqref{noiseBounds-L^alpha} imply
	\begin{align*}
	B\in \mathscr{L}({H},\gamma(\Yb,{H})),\quad B\in \mathscr{L}(\EA,\gamma(\Yb,\EA)),\quad B\in \mathscr{L}(\LalphaPlusEins,\gamma(\Yb,\LalphaPlusEins)).
	\end{align*}
\end{remark}

\begin{remark}
The property \eqref{cont_V^*} will be exploited in the proof of Lemma \ref{convquadvari} given in Appendix C. The corresponding property for $B$ is not needed since the analogue of Lemma \ref{convquadvari} for $B$ can be proved exploiting the selfadjointness of the operators $B_m$, $m \in \mathbb{N}$. For more details see \cite[Lemma 6.3, step 4]{Brz+H+W-2019}.
\end{remark}

\begin{example}
Examples of operator $B$  satisfying the required properties can be found  in \cite[Section 3.5]{Brz+H+W-2019}.
The self-adjointness of $B$ is crucial there, see \cite[Remark 3.7]{Brz+H+W-2019}.
\\
Concerning the second operator $G$,
in the case it is linear (and examples of such are the same as for $B$) we do not require it to be self-adjoint.
An example of a nonlinear operator $G$ can be constructed as done in \cite[Example 2.3]{BF17} and \cite[Section 2.3]{FZ18}. For any $m \in \mathbb{N}$, let $G(u)e_m:=c_m\sigma(u)h_m$, with  $c_m \in \mathbb{R}$ such that \[\sum_{m=1}^{\infty} c_m^2\|h_m\|^2_V < \infty\]
and, for a fixed and given $k \in V$,
\[
\sigma: V^\ast \ni u \mapsto \frac{\langle u,k\rangle^2}{1+\langle u, k\rangle^2}  \in  \mathbb{R}.\]
 It can be easily verified that this operator satisfies Assumption \ref{assumption-stochastic}(iii).
\end{example}

\section{Statement of the main results}
\label{Section3MR}
This Section is devoted to the statements of our main results. The first result concerns the existence of a unique strong solution to \eqref{eqn-ProblemStratonovich} for a random initial data. The second result concerns the existence of an  invariant measure.

We rewrite equation \eqref{eqn-ProblemStratonovich} in the It\^o form. We have, see e.g. \cite{BE-loops,TwN},
\begin{align*}
-\im B u(t) \circ\df W(t)&=-\im B u(t) \df W(t)+\frac12  \sumM -\im B'[u]\left(-\im B(u(t))f_m\right)f_m \df t\\
&=-\im Bu(t)\,{\rm d}W(t)- \frac12\sumM B(Bu(t)f_m)f_m\, {\rm d}t
\\
&= -\im B u(t) \df W(t)-\frac 12 \sumM B_m^2 u(t) \df t.
\end{align*}
Hence,  equation $\eqref{eqn-ProblemStratonovich}$ will be understood in the following It\^o form 	
\begin{equation}\label{Problem}
\begin{aligned}
\df u(t)&= -\left[\im A u(t)+\im  F(u(t))+\beta u(t)-b(u(t)) \right] \df t
\\
&\,\qquad\qquad\qquad-\im B u(t) \, \df W(t) -\im G(u(t)) \,\df \newW(t)
,\hspace{0,3 cm} t>0,
\end{aligned}
\end{equation}
where
\begin{align}
\label{Strat_cor}
b(u) := -\frac{1}{2} \sumM B_m^2 u,\qquad u\in{H},
\end{align}
is the Stratonovich correction term.
Notice that from assumptions \eqref{noiseBoundsH} and  \eqref{noiseBoundsEnergy}
we infer that
\begin{align}
\label{eqn-b operator}
b \in \mathscr{L}(H) \cap \mathscr{L}(\EA)  \cap \mathscr{L}(\LalphaPlusEins),
\end{align}
i.e.  $b$ is a linear bounded operator in $H$ as well as in $\EA$ and $\LalphaPlusEins$.

We recall that the deterministic unforced nonlinear Schr\"odinger equation, i.e. equation \eqref{Problem} with $\beta=0$, $G=0$ and $B=0$, as a consequence of its Hamiltonian structure, has two invariant quantities: the mass $\|u\|^2_H$ and the energy $\mathcal{E}(u)$, which is defined as
\begin{align}
	\label{eqn-energy_def}
	\energy (u):= \frac{1}{2} \Vert A^{\frac{1}{2}} u \Vert_{H}^2+\Fhat(u),\qquad u\in \EA.
	\end{align}
Note that $\Fhat(u)$, hence $\energy (u)$ too, is  well defined for $u\in \EA$
thanks to  the  embedding $\EA \hookrightarrow L^{\alpha+1}$, see \eqref{eqn-Gelfand triple},  and the form of $\hat F$, see \eqref{eqn_boundantiderivative}.
\\
In general, in the presence of stochastic forcing, these quantities are no longer conserved. But, when the noise is of purely Stratonovich form, in the form we consider here,  and if there is no dissipation, i.e. $\beta=0$ and $G=0$,  one has conservation of mass but not conservation of energy. This is the case studied in \cite{Brz+H+W-2019} and it is a particular case of our framework.
 In the more general setting we consider in this work, neither the mass or energy are preserved. Nevertheless, as quite classical in the stochastic case, we can still use these functionals to prove the existence of solutions with values in the energy space $\EA$.

The following definition is given under
 Assumptions \ref{assumption-A-space}, \ref{assumption-F_def} and \ref{assumption-stochastic}.
\begin{definition}\label{def-martingale solution}
	Let  $\mu$  be a Borel probability measure on the energy space $\EA$
	with
	\begin{equation}\label{integrability-initial-data}
	\int \left( \Vert x \Vert_{H}^2+ \energy (x)\right) \,{\rm d}\mu(x)<\infty.
	\end{equation}
A \emph{martingale solution} of the equation $\eqref{Problem}$ with the initial data  $\mu$ is a system
\begin{equation}\label{eqn-mart sol}
\bigl(\tilde{\Omega},\tilde{\F},\tilde{\Prob},\tilde{W},\tilde{\newW},\tilde{\Filtration},u\bigr)
\end{equation}
 consisting of
	\begin{itemize}
\item a filtered probability space $\bigl(\tilde{\Omega},\tilde{\F},\tilde{\Prob},\tilde{\Filtration}\bigr)$,   satisfying the usual conditions, i.e.
 the filtration $\tilde{\Filtration}=\bigl(\tilde{\F}_t\bigr)_{t\in [0,\infty)}$ is  right-continuous and such that all $\tilde{\Prob}$-null, i.e. $\tilde{\Prob}$-negligible,  sets of $\mathscr{F}$ are elements of $\tilde{\F}_0$;
\item  two independent  $\Yb$-cylindrical, resp. $\Yg$-cylindrical,   Wiener  processes $\tilde{W}$ and  $\tilde{\newW}$ on  $\bigl(\tilde{\Omega},\tilde{\F},\tilde{\Prob}\bigr);$
\item
 a $H$-valued  continuous and  $\tilde{\Filtration}$-adapted process  $u$ with $\tilde{\Prob}$-almost all paths in
	 $C_w([0,\infty),\EA)$,  fulfilling the initial condition  \[\tilde{\mathbb{P}}(u(0))=\mu\] and such that,
	 for every  $T>0$,
 \begin{equation}\label{eqn-ass-u}
 \tilde \E \left(\Vert u\Vert_{L^\infty(0,T;H)}^2+ \Vert  \energy (u)\Vert_{L^\infty(0,T)}\right)<\infty
 \end{equation}
and for every $t \geq 0$
  the equality
	\begin{align}\label{eqn-ItoFormSolution}
	u(t)=  u(0)- \int_0^t \left[\im A u(s)+\im F(u(s))+\beta  u(s) -b(u(s))\right] \df s     \nonumber
	\\
	- \im \int_0^t B u(s) \df \tilde{W}(s) -\im \int_0^t G(u(s)) \,\df \tilde{\newW}(s),
	\end{align}
	holds $\tilde{\mathbb{P}}$-almost surely in $\EAdual$.
\end{itemize}
\end{definition}
\begin{remark}\label{rem-def of solution}
Let us notice that the four deterministic Bochner $\EA^\ast$-valued integrals that appear in \eqref{eqn-ItoFormSolution} make sense.
First, we notice that, since by \eqref{eqn-V to H} the embedding $V \hookrightarrow H$ is compact, the (weak)-continuity in $V$ implies the (strong)-continuity in $H$,  hence $u $ has $\tilde{\mathbb{P}}$-a.s. paths in $C([0,T],H)$.
In addition, since by  \eqref{eqn-Gelfand triple}, the embedding $H \hookrightarrow V^\ast$ is  continuous, the (strong)-continuity in $H$  implies the (strong)-continuity in $V^\ast$. Therefore $u $ has $\tilde{\mathbb{P}}$-a.s. paths in $C([0,T],V^*)$.
\\
Second, we have that $u \in L^{\infty}(0,T;V)$ and $u$ is Bochner integrable in $V$, in view of the following argument:
\begin{itemize}
\item[(i)] if  $u \in C_w([0,T],\EA)$, since $[0,T]$ is compact, the range of $u$ is a compact subset of  $\EA_w$. Since by the Banach-Steinhaus Theorem, compact sets in weak topology are strongly, i.e. norm bounded, we infer that  the range of $u$ is a (norm) bounded  subset of  $\EA$;
\item[(ii)] if  $u \in C_w([0,T],\EA)$, then $u$ is $\mathscr{B}([0,T])/\mathscr{B}(\EA_w))$-measurable. On the other hand, see argument after (2.8) in \cite{Brz+Ferr_2019},  $\mathscr{B}(\EA_w)=\mathscr{B}(\EA)$, see also   \cite[Theorem 7.19]{Zizler_2003} and \cite{Edgar_1979} for more general claims. Therefore,
    function $u:[0,T] \to \EA$ is $\mathscr{B}([0,T])/\mathscr{B}(\EA)$-measurable.
\item[(iii)]    It follows from items (i) and (ii),  that
if  $u \in C([0,T],\EA_w)$ then $u:[0,T] \to \EA$ is  measurable and bounded. Hence in particular,
(the equivalence class of) $u$ belongs to
$L^\infty(0,T;\EA)$ and $u$ is Bochner integrable in $V$, see \cite[section II.2, p.50]{Diestel+Uhl_1977}.
    \end{itemize}
A special attention should be paid to the  $\EA^\ast$-valued Bochner integral  $\int_0^T \im F(u(s)) \,{\rm d}s $.
This integral exists because if $ x\in C([0,T];\EA_w)$ then by the compactness of the embedding $\EA\hookrightarrow\LalphaPlusEins$ we infer that
$ x\in C([0,T];\LalphaPlusEins)$ and therefore, by part (i) of Lemma \ref{lem-nonlinearAssumptions},
$F \circ x \in C([0,T];\LalphaPlusEinsDual)$. Thus, the integral   $\int_0^T  F(x(s)) \,{\rm d}s $  exists in $\LalphaPlusEinsDual$ in the Riemann sense. Thus, by
\eqref{eqn-Gelfand triple}, we infer  that the integral   $\int_0^T  F(x(s)) \,{\rm d}s $  exists in $\EAdual$ in the Riemann, and not only Bochner,  sense.

For what concerns the stochastic integrals that appear in \eqref{eqn-ItoFormSolution}, let us emphasize that  in order for them to be well defined
it would be enough to require, instead of \eqref{eqn-ass-u},  that
\begin{equation*}
\tilde{\mathbb{E}} \left[\int_0^T \| u(t) \|_H^2 \, {\rm d}t\right] <\infty, \mbox{ for every } T>0.
\end{equation*}
\end{remark}

\begin{definition}\label{def-strong solution}
Assume that 	
\begin{equation}\label{eqn-mart sol}
\bigl({\Omega},{\F},{\Prob},{W},{\newW},{\Filtration}\bigr)
\end{equation}
is a system  consisting of
	\begin{itemize}
\item a filtered probability space $\bigl({\Omega},{\F},{\Prob},{\Filtration}\bigr)$,   satisfying the usual conditions, i.e.
 the filtration ${\Filtration}=\bigl({\F}_t\bigr)_{t\in [0,\infty)}$ is  right-continuous and such that all ${\Prob}$-null, i.e. ${\Prob}$-negligible,  sets of $\mathscr{F}$ are elements of ${\F}_0$;
\item  two independent  $\Yb$-cylindrical, resp. $\Yg$-cylindrical,   Wiener  processes ${W}$ and  ${\newW}$ on  $\bigl({\Omega},{\F},{\Prob}\bigr).$
\end{itemize}
Let  $u_0:\Omega \to \EA$  be a ${\F}_0/ \mathscr{B}(\EA)$  measurable function such that  with	\begin{equation}\label{integrability-initial-data-u_0}	{\mathbb{E}} \left( \Vert u_0 \Vert_{H}^2+ \energy (u_0)\right) <\infty.	
\end{equation}
A \emph{strong  solution} of the equation $\eqref{Problem}$ with the initial data $u_0$ is
$H$-valued  continuous and  ${\Filtration}$-adapted process  $u$ with ${\Prob}$-almost all paths in
	 $C_w([0,\infty),\EA)$ and such that,
	 for every  $T>0$, condition \eqref{eqn-ass-u} holds and,
and for every $t \geq 0$
  the equality
  	\begin{align*}
	u(t)=  u_0- \int_0^t \left[\im A u(s)+\im F(u(s))+\beta  u(s) -b(u(s))\right] \df s     \nonumber
	\\
	- \im \int_0^t B u(s) \df {W}(s) -\im \int_0^t G(u(s)) \,\df {\newW}(s),
	\end{align*}
	holds ${\mathbb{P}}$-almost surely in $\EAdual$.
\end{definition}

{The following is a summary of the first of our main results. For a more detailed statements see Theorems \ref{thm-existence} and \ref{thm-pathwise uniqueness}.
\begin{theorem}\label{mainTh}
Fix $\newr\in [1,\infty)$.
Under the Assumptions \ref{assumption-A-space}, \ref{assumption-F_def} and \ref{assumption-stochastic}, for  every  Borel probability measure  $\mu$  on $\EA$ whose $r(\alpha+1)$-th moment   is  finite, i.e.
\begin{equation}\label{eqn-mu-r(alpha+1)}
\int\Vert x\Vert_V^{r(\alpha +1)}\,{\rm d}\mu(x)<\infty
\end{equation}
the following assertion hold true.
 \begin{itemize}
  \item [i)]
 There exists a martingale solution  to equation \eqref{eqn-ProblemStratonovich} with the initial data  $\mu$ such that, for every $T>0$,
 \begin{equation}\label{stima_func}
\tilde \E\left[ \sup_{t \in [0,T]} \| u(t)\|_H^{2r}+\sup_{t \in [0,T]} \mathcal{E}(u(t))^{r}  \right]<\infty.
\end{equation}
  In particular
 	\begin{equation}\label{add-regularity}
		\tilde \E\left[ \sup_{t \in [0,T]} \| u(t)\|_V^{2r}  \right]<\infty.
	\end{equation}
 \item [ii)] In addition, if $r\ge 2$, then the solutions fulfilling \eqref{add-regularity}  are pathwise unique.
\end{itemize}
\end{theorem}
The above result implies the existence of a unique strong solution, see Theorem \ref{thm-strong existence}.
\begin{remark}
\label{bound_rem}
Let us notice that inequality \eqref{add-regularity} is an immediate consequence of the previous estimate \eqref{stima_func}
since 
\[\|u\|^{2r}_V \lesssim_r \|u\|^{2r}_H +\energy(u)^r.\]
\end{remark}
\begin{remark}
\label{rem_new_id}
\begin{itemize}
\item[ i)] When $r=1$,  the  assumption on the initial data in Theorem \ref{mainTh}  is
\begin{equation}\label{eqn-mu-r+1}
\int\Vert x\Vert_V^{\alpha+1}\,{\rm d}\mu(x)<\infty
\end{equation}
 which
 is stronger than the assumption \eqref{integrability-initial-data} appearing in the
 Definition \ref{def-martingale solution}. In fact
 \[
\int \left( \Vert x \Vert_{H}^2+ \energy (x)\right) \,{\rm d}\mu(x)
\lesssim
\int \left( \Vert x \Vert_{V}^2+  \Vert x \Vert_{V}^{\alpha+1}\right) \, {\rm d}\mu(x)
\lesssim 1+ \int   \Vert x \Vert_{V}^{\alpha+1}\, {\rm d}\mu(x).
\]
 We need assumption \eqref{eqn-mu-r+1}
   because of our construction of  a martingale solution by means of the
  finite-dimensional Galerkin approximation. Indeed
 in the finite-dimensional Galerkin approximation we will need   uniform estimates of the power-type  nonlinearity, which  hold in the Hilbert spaces  $H$ and $\EA$
but not in the Lebesgue space $\LalphaPlusEins$, see \eqref{PnHeinsKontraktiv}.
\\
 Similarly, to gain the additional $L^r(\widetilde{\Omega})$-integrability, with $r>1$, of the solution process  the condition on the initial datum has to be strengthened requiring its $r(\alpha+1)$-th moment to be bounded, see \eqref{stima-finale-dimo-7-5}.
\item [ii)] On the other hand, if the initial datum $u_0 \in V$ is deterministic, Theorem \ref{mainTh} ensures the existence of a unique strong solution fulfilling \eqref{stima_func} and \eqref{add-regularity} for any finite $r \ge 1$.
\end{itemize}
\end{remark}

To study the existence of an invariant measure for equation \eqref{Problem} we work with deterministic initial data $u_0 \in V$. We are thus in the situation described in Remark \ref{rem_new_id}(ii) and we deal with the unique strong solution to \eqref{Problem} fulfilling  \eqref{stima_func} and \eqref{add-regularity}.
Given the (non-random) initial datum $u_0 \in V$, we denote by  $\{u(t;u_0)\}_{t\ge 0}$
this unique strong solution. We define the family of operators  $\{P_t\}_{t\ge 0}$ by
\begin{equation}\label{def-P_t}
P_t\phi(u_0)=\mathbb E[\phi(u(t);u_0)]
\end{equation}
and prove that this is a Markov semigroup, see Section \ref{sec-weakCont}, which is sequential weak Feller in $V$.
Then we say that a Borel probability measure $\pi$ on  $V$ is an invariant measure for equation \eqref{eqn-ProblemStratonovich} iff
\[
\int_V P_t \phi\ d\pi=\int_V \phi\  d\pi
\]
for  all $t\ge 0$ and all  bounded functions $\phi:V\to\mathbb R$ which are
 sequentially continuous with respect to the
weak topology on $V$.

The following is our second main result.
\begin{theorem}\label{mainTh2}
Under the Assumptions \ref{assumption-A-space}, \ref{assumption-F_def} and \ref{assumption-stochastic}
there exists at least one invariant measure for equation \eqref{eqn-ProblemStratonovich} provided
      \begin{equation}\label{condizione-beta}
\beta>\max\left(\tilde{C}_1^2+\tilde{C}_2^2+\|B\|^2_{\mathscr{L}(V,\gamma(Y_1,V))},
                     \frac{\alpha +1}{2}\|B\|^2_{\mathscr{L}(L^{\alpha+1},\gamma(Y_1,L^{\alpha+1}))} +\alpha\tilde{C}^2_3  \right).
      \end{equation}
\end{theorem}

\begin{remark}
\label{general}
$\quad$
\begin{itemize}
\item [i)] Statement (i)  of Theorem \ref{mainTh} holds in a more general setting, see Remark \ref{rem-FR1}. Since our focus is on the existence of an invariant measure, we restrict our analysis to the less general setting.
This is done also in \cite{Brz+H+W-2019} to get the uniqueness of solutions.
\item[ii)] For a discussion concerning the regularity assumptions we impose on the domain $\mathscr{O}$ in the case of Assumption \ref{assumption-A-space}(ii)-(iii), see Remark \ref{FR2}.
\item [iii)]
The condition \eqref{condizione-beta} on $\beta$ does not depend on the coefficients
$C_1$, $C_2$ and $C_3$ characterizing the second noise term.
When $B=0$ and $\tilde{C}_1=\tilde{C}_2=\tilde{C}_3=0$, that is when we consider a multiplicative noise
$G(u) \,\df \tilde{\newW}$ with bounded covariance, the condition \eqref{condizione-beta} reduces to $\beta>0$.
Therefore, in the particular case of additive noise  (i.e., $B=0$ and $G$ independent of $u$), we recover the same condition
$\beta>0$ as  in  the previous papers \cite{Kim_2006} and \cite{Ekren_2017}.
\item [iv)] In condition \eqref{condizione-beta} there is the constant $\tilde{C}_1$ (which somehow measures the
intensity of the noise $G$ in the $H$-norm)  but not the analogue for the noise $B$,  that is the term
$\|B\|^2_{\mathscr{L}(H,\gamma(Y_1,H))}$. This asymmetry is due to the fact that the Stratonovich noise $B$
 (in the absence of damping) preserves the $H$-norm, whereas the noise $G$ does not.
 In other words, the correction term $b$ of the Stratonovich noise cancels with the term
 $\|B\|^2_{\mathscr{L}(H,\gamma(Y_1,H))}$ acting as a sort of damping term which perfectly balance the intensity
 of the noise $B$ in the $H$-norm.

A similar reasoning can be done to explain also why we have the different constants $\alpha$ and $ \frac{\alpha +1}{2}$
multiplying the terms $\tilde{C}^2_3$ and $\|B\|^2_{\mathscr{L}(L^{\alpha+1},\gamma(Y_1,L^{\alpha+1}))}$ respectively.
We have $\frac{\alpha+1}{2}<\alpha$: the correction term $b$ provides part of the dissipation in the $L^{\alpha +1}$ norm.
\end{itemize}
\end{remark}

For the purely multiplicative noise,  the  uniqueness  of an invariant measure will be given in Corollary \ref{cordelta0}.

\section{Compactness and tightness criteria}	\label{CompactnessSection}

This Section is devoted to recalling the compactness results, which will be used  in Section \ref{sec-exis} to obtain a
martingale solution as  limit of
the Faedo-Galerkin approximation and in Section \ref{sec-weakCont} to prove the continuous  dependence of the solutions on the initial data.

Let $\alpha>1$ and $A$ be chosen according to Assumption $\ref{assumption-A-space}$.
We consider  the  Banach spaces ${C([0,T];\EAdual)}$ and ${L^{\alpha+1}(0,T;L^{\alpha+1})}$,
and the  locally convex space $C_w([0,T];\EA)$.
So we define the space
\begin{align}\label{eqn-Z_T}
Z_T&= {C([0,T];\EAdual)}\cap{L^{\alpha+1}(0,T;L^{\alpha+1})}\cap C_w([0,T];\EA)
\end{align}
with the topology $\mathscr{T}_T$ given by the supremum of the corresponding topologies. By $\mathscr{B}(Z_T)$ we denote the associated  Borel
$\sigma$-field, i.e. the $\sigma$-field generated by the open sets in the locally convex topology $\mathscr{T}_T$ of $Z_T$.
We also define a corresponding space of functions defined on the whole half-line $[0,\infty)$ with three
locally convex spaces $C([0,\infty);\EAdual)$, $L^{\alpha+1}_{\mathrm{loc}}([0,\infty);\LalphaPlusEins)$ and
$C_w([0,\infty);\EA)$:
\begin{align}
\label{eqn-Z_infty}
Z_\infty&= {C([0,\infty);\EAdual)} \cap {L^{\alpha+1}_{\mathrm{loc}}([0,\infty);\LalphaPlusEins)}
\cap C_w([0,\infty);\EA) =\bigcap_{T \in \mathbb{N}} Z_T.
\end{align}
with the topology $\mathscr T_{\infty}$ defined analogously.

In the next Proposition, we give a criterion for compactness in  $Z_\infty $.

\begin{proposition}\label{CompactnessDeterministic}
Let $(r_N)_{N=1}^\infty$ be a sequence of positive numbers	and  $K$ be a subset of $Z_\infty$  such that for every $N\in \N$, 
	\begin{enumerate}
		\item[a)] $
		\displaystyle\sup_{u\in K} \Vert u\Vert_{L^\infty(0,N;\EA)}\le r_N ;
		$
		\item[b)] $K$ is equicontinuous in ${C([0,N];\EAdual)},$ i.e.
		\begin{align*}
		\lim_{\delta \to 0} \sup_{u\in K} \sup_{s,t \in [0,N]:\vert t-s\vert\le \delta} \Vert u(t)-u(s)\Vert_{\EAdual}=0.
		\end{align*}
	\end{enumerate}
	Then, $K$ is relatively compact in $Z_\infty$.
\end{proposition}
\begin{proof}The proof is a minor modification of the proof of \cite[Proposition 4.2]{Brz+H+W-2019}. Here one uses Lemma \ref{convergenceStetigBall} which is the restatement of \cite[Lemma 4.1]{Brz+H+W-2019} on the time interval $[0, \infty)$.
\end{proof}

Now we want to obtain a criterion for tightness in $Z_\infty$.
Therefore, we introduce the Aldous condition, working in a probability space
$(\Omega, \mathcal{F}, \mathbb{P})$ with filtration $\mathbb{F}:=\{\mathcal{F}_t\}_{t \in [0,\infty)}$ satisfying the usual conditions.
\begin{definition}\label{DefinitionAldous}
	We say that a sequence  $(X_n)_{n\in\N}$ of continuous $\mathbb{F}$-adapted stochastic processes taking values in a Banach space $E$ 
satisfies  the Aldous condition $[A]$ if and only if 
for all $T>0$,  $\varepsilon>0$ and $\eta>0$ there is $\delta>0$ such that for every sequence $(\tau_n)_{n\in\N}$ of $\mathbb{F}$-valued stopping times with $\tau_n \le T$, one has
	\begin{align}
	\sup_{n\in\N} \sup_{0<\theta \le \delta} \Prob \left\{ \Vert X_n((\tau_n+\theta)\land T)-X_n(\tau_n)\Vert_E\ge \eta \right\}\le \varepsilon.
	\end{align}
\end{definition}

The following Lemma, which generalises  \cite[Lemma A.7]{Motyl_2013}, gives us a useful consequence of the Aldous condition $[A].$

\begin{lemma} \label{AldousLemma}			
	Let $(X_n)_{n\in\N}$ be a sequence of continuous $\mathbb{F}$-adapted stochastic processes in a Banach space $E,$ which satisfies the Aldous condition $[A].$ Then, for every $\varepsilon>0$ there exists a measurable subset $A_\varepsilon \subset C([0,\infty),E)$ such that
	\begin{align*}
	\inf_{n \in \mathbb{N}}\law{\Prob}{X_n}(A_\varepsilon)\ge 1-\varepsilon,\qquad
	\end{align*}
and, for every $N \in \N$, 
	\begin{align*}
	\lim_{\delta\to 0} \sup_{u\in A_\varepsilon} \sup_{s,t \in [0,N]: \vert t-s\vert\le \delta} \Vert u(t)-u(s)\Vert_E=0.
	\end{align*}
\end{lemma}
\begin{proof}
The proof of \cite[Lemma A.7]{Motyl_2013} can be easily adapted to the present situation.
\end{proof}

The deterministic compactness result in Proposition \ref{CompactnessDeterministic} and Lemma \ref{AldousLemma} can be used to get the following criterion for tightness in $Z_\infty$.

\begin{proposition}\label{TightnessCriterion}
	Let $(X_n)_{n\in\N}$ be a sequence of continuous adapted $\EAdual$-valued processes satisfying the Aldous condition $[A]$ in $\EAdual$ and 
	\begin{align*}
	\sup_{n\in\N} \E \left[\Vert X_n\Vert_{L^\infty(0,T;\EA)}^2\right] <\infty, \mbox{ for every $T>0$}.
	\end{align*}
	Then the sequence $\left(\law{\Prob}{X_n}\right)_{n\in\N}$ is tight in $Z_\infty,$ i.e. for every $\varepsilon>0$ there is a compact set $K_\varepsilon\subset Z_\infty$ with
	\begin{align*}
	\inf_{n \in \mathbb{N}}\law{\Prob}{X_n}(K_\varepsilon)\ge 1- \varepsilon.
	\end{align*}
\end{proposition}

\begin{proof}
	Let us choose and fix $\varepsilon>0.$  Let us set $c:=\sum_{N=1}^\infty\frac{1}{N^2}$ and let us define a sequence $(r_N)_{N=1}^\infty$ by 
\[
r_N:= \left(\frac{2c}{\varepsilon} \sup_{n\in\N} \E \left[ \Vert X_n\Vert_{L^\infty(0,N;\EA)}^2\right]\right)^{\frac{1}{2}}.
\]
Set
\begin{equation*}
B_N:=\{ \Vert X_n\Vert_{L^\infty(0,N;\EA)}\le N r_N\}.
\end{equation*}
Then, by the Chebyshev inequality  we obtain, for every $N \in \N$, 
	\begin{align}
\mathbb{P}(B_N^c)\le \frac{1}{N^2r_N^2}\E \left[\Vert X_n\Vert_{L^\infty(0,N;\EA)}^2\right]\le \frac{\varepsilon}{2cN^2}.
	\end{align}
Set
\begin{align}\label{eqn-B set}
B:= \left\{u\in Z_\infty: \Vert u\Vert_{L^\infty(0,N;\EA)}\le Nr_N, \; \mbox{ for every } N\in \N\right\}= \bigcap_{N \in \mathbb{N}}B_N.
\end{align}
We have 
\begin{equation*}
\mathbb{P}(B^c) \le \sum_{N \in \mathbb{N}}\mathbb{P}(B_N^c)\le \frac{\varepsilon}{2}.
\end{equation*}
By Lemma \ref{AldousLemma}, we  can use the Aldous condition $[A]$ to get a Borel subset $A$ of ${C([0,\infty);\EAdual)}$ such that
	\begin{align}
	\law{\Prob}{X_n}\left(A\right)&\ge 1-\frac{\varepsilon}{2},\quad n\in\N, 
\\
 \lim_{\delta\to 0} \sup_{u\in A} &\sup_{s,t \in [0,N]: \vert t-s\vert\le \delta} \Vert u(t)-u(s)\Vert_\EAdual=0 \mbox{ for every } N\in \N.
	\end{align}
	We define $K:= \overline{A\cap B}$. By Proposition \ref{CompactnessDeterministic} this set $K$ is compact in $Z_\infty$. Moreover 
	for all $n\in \N$ we have 
	\begin{align}
	\law{\Prob}{X_n}(K)\ge \law{\Prob}{X_n}\left(A\cap B\right)\ge \law{\Prob}{X_n}\left(A\right)-\law{\Prob}{X_n}\left( B^c\right)\ge 1-\frac{\varepsilon}{2}-\frac{\varepsilon}{2}=1-\varepsilon.
	\end{align}
This completes the proof.
\end{proof}

In metric spaces, one can apply Prokhorov Theorem (see \cite{Parthasarathy_1967}, Theorem II.6.7) and Skorohod Theorem (see \cite[Theorem 6.7]{Billingsley_1999}) to obtain convergence from tightness. Since the space $Z_\infty$ is a locally convex space, we use the  following generalization to nonmetric spaces.

\begin{proposition}[Skorohod-Jakubowski]\label{SkohorodJakubowski}
	Let $\mathcal{X}$ be a topological space such that there is a sequence of continuous functions $f_m: \mathcal{X}\to \C$ that separates points of $\mathcal{X}.$ Let $\mathcal{A}$ be the $\sigma$-algebra  generated by $\left(f_m\right)_m.$ Then, we have the following assertions:
	\begin{enumerate}
		\item[a)] Every compact set $K\subset \mathcal{X}$ is metrizable.
		\item[b)] Let $\left(\mu_n\right)_{n\in\N}$ be a tight sequence of probability measures on $\left(\mathcal{X}, \mathcal{A}\right).$
		Then, there are a subsequence $\left(\mu_{n_k}\right)_{k\in\N},$ random variables $X_k$ (for $k\in\N$) and   $X$  on a common probability space $(\tilde{\Omega},\tilde{\Filtration},\tildeProb)$ with $\law{\tildeProb}{X_k}=\mu_{n_k}$ for $k\in\N,$ and $X_k \to X$ $\tildeProb$-almost surely for $k\to \infty.$
	\end{enumerate}
\end{proposition}
We stated Proposition \ref{SkohorodJakubowski} in the form of \cite{Brz+Ondr_2011}; see also \cite{Jak98}, where it was first used to construct martingale solutions for stochastic geometric wave equations.
We apply 
this result to get the final result of this Section.
\begin{corollary}\label{corollaryEstimatesToASconvergence}
	Let $(X_n)_{n\in\N}$ be a sequence of adapted $\EAdual$-valued processes satisfying the Aldous condition $[A]$ in $\EAdual$ and
	\begin{align*}
	\sup_{n\in\N} \E \left[\Vert X_n\Vert_{L^\infty(0,T;\EA)}^2\right] <\infty, \mbox{ for every $T>0$}.
	\end{align*}
	Then, there are a subsequence $(X_{n_k})_{k\in\N}$ and random variables $\tilde{X}_k,$ $\tilde{X}$ for $k\in\N$ on another probability space $(\tilde{\Omega},\tilde{\Filtration},\tildeProb)$ with $\law{\tildeProb}{\tilde{X}_k}=\law{\Prob}{X_{n_k}}$ for $k\in\N,$  and $\tilde{X}_k \to \tilde{X}$ $\tildeProb$-almost surely in $Z_\infty$ for $k\to \infty.$
\end{corollary}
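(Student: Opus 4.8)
The plan is to obtain the statement as a direct consequence of the tightness criterion in Proposition \ref{TightnessCriterion}, combined with the Jakubowski generalization of the Skorohod representation theorem to non-metric spaces \cite{Jak86,Jak98}. The only genuine work beyond quoting these two results is to confirm that the locally convex space $(Z_T,\mathcal{T})$ falls within the scope of Jakubowski's theorem, i.e. that it carries a countable family of $\mathcal{T}$-continuous real-valued functions separating its points.

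First I would apply Proposition \ref{TightnessCriterion} verbatim. The two hypotheses of the corollary---the Aldous condition $[A]$ in $\EAdual$ and the uniform bound $\sup_{n\in\N}\E[\|X_n\|^2_{\LinftyEA}]<\infty$---are precisely those of the Proposition, so the family of laws $(\Prob^{X_n})_{n\in\N}$ is tight in $Z_T$: for every $\varepsilon>0$ there is a compact set $K_\varepsilon\subset Z_T$ with $\inf_{n\in\N}\Prob^{X_n}(K_\varepsilon)\ge 1-\varepsilon$.

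Second, I would verify the separation hypothesis. Recall that $Z_T=\StetigEAdual\cap\LalphaPlusEinsAlphaPlusEins\cap\weaklyContinousEA$ with $\mathcal{T}$ the supremum of the three topologies; in particular the inclusion $(Z_T,\mathcal{T})\embed\StetigEAdual$ is continuous. Since $\EAdual$ is separable and $[0,T]$ is compact, $\StetigEAdual$ is a separable Banach space and hence carries a countable family $(g_m)_{m\in\N}$ of continuous functionals separating its points, for instance obtained from a dense sequence of its dual unit ball. Composing with the inclusion yields a countable family of $\mathcal{T}$-continuous functions on $Z_T$. These already separate the points of $Z_T$: any $u\in Z_T$ is, in particular, a continuous $\EAdual$-valued path, so two elements of $Z_T$ that agree as elements of $\StetigEAdual$ must coincide, the additional integrability and weak continuity recorded by the other two components being only extra regularity of the \emph{same} path. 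Thus the hypothesis of Jakubowski's theorem is met without any direct manipulation of the weak topology.

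Finally, Jakubowski's theorem applies to the tight sequence $(\Prob^{X_n})$ on $(Z_T,\mathcal{T})$ equipped with its Borel $\sigma$-algebra: it produces a subsequence $(X_{n_k})_{k\in\N}$, a probability space $(\tilde{\Omega},\tilde{\F},\tildeProb)$, and $Z_T$-valued random variables $\{\tilde{X}_k\}_{k\in\N}$ with $\tildeProb^{\tilde{X}_k}=\Prob^{X_{n_k}}$ together with a limit $\tilde{X}$ such that $\tilde{X}_k\to\tilde{X}$ $\tildeProb$-almost surely in $Z_T$, which is exactly the assertion. The only point requiring care, and hence the main obstacle, is the point-separation check for the non-metrizable component $\weaklyContinousEA$; this is circumvented by the observation above, namely that separation can be arranged already on the metrizable component $\StetigEAdual$, so that the non-metric nature of $Z_T$ enters only through the choice of the Jakubowski theorem rather than through any additional estimate.
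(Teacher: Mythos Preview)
Your proposal is correct and follows exactly the route indicated in the paper: tightness from Proposition~\ref{TightnessCriterion} followed by Jakubowski's Skorohod-type theorem for non-metric spaces. The paper does not spell out the point-separation verification, so your argument via the continuous inclusion $Z_T\hookrightarrow\StetigEAdual$ and the separability of the latter is a welcome addition that makes explicit the one nontrivial hypothesis of Jakubowski's theorem.
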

\begin{proof}
This proof is also a minor modification of the proof of \cite[Corollary 4.7]{Brz+H+W-2019}. 
\\
Let us recall that $Z_\infty$ is a locally convex space. Therefore, the assertion follows by an application of Propositions \ref{TightnessCriterion} and \ref{SkohorodJakubowski} if for each of the spaces in the definition of $Z_\infty$ we find a sequence $f_m: Z_\infty \to \R$ of continuous functions separating points which generates the Borel $\sigma$-field.  The separable Fr{\'e}chet  spaces  ${C([0,\infty);\EAdual)}$ and ${L^{\alpha+1}_{\loc}(0,\infty;L^{\alpha+1})}$ have this property. \\
	Let $\left\{h_m: m\in \N \right\}$ be a dense subset of $\EAdual.$ Then, we define the countable set \\$F:=\left\{f_{m,t}: m\in\N, t \in [0, \infty)\cap \mathbb{Q} \right\}$ of functionals on $C_w([0,\infty);\EA)$ by
	\begin{align*}
	f_{m,t}(u):= \duality{u(t)}{h_m}, \;\; u \in C_w([0,\infty);\EA), 
	\end{align*}
	for $m\in\N,$ $t\in [0,\infty)\cap \mathbb{Q}$ and $u\in C_w([0,\infty);\EA).$ \\
	The set $F$ separates points, since for $u,v \in C_w([0,\infty);\EA)$ with $f_{m,t}(u)=f_{m,t}(v)$ for all $m\in\ N$ and $t\in [0,\infty)\cap \mathbb{Q},$ we get $\duality{u}{h_m}=\duality{v}{h_m}$ on $[0,\infty)$ for all $m\in\N$ by continuous continuation and therefore $u=v$ on $[0,\infty).$ \\
	Furthermore, the density of $\{h_m: m\in\N\}$ and the definition of the locally convex topology yield that $\left(f_{m,t}\right)_{m\in\N, t\in [0,\infty)\cap \mathbb{Q}}$ generate the Borel $\sigma$-algebra on $C_w([0,\infty);\EA).$
\end{proof}

\section{Existence of a martingale solution}\label{sec-exis}

In this Section we prove the existence of at least one martingale solution, see Definition \ref{def-martingale solution}.
In this way we prove part i) of Theorem \ref{mainTh}; moreover we  provide an estimate over the time interval $[0,\infty)$.

\subsection{Statement of the existence result}  \label{subsect-statement}	

Keeping in mind the definition of the energy functional $\mathcal{E}$ given in \eqref{eqn-energy_def}, we state the main result of this Section.

\begin{theorem}\label{thm-existence}
Fix $\newr\in [1,\infty)$  and let $\mu$ be a Borel probability measure on $\EA$ whose
$r(\alpha+1)$-th moment   is  finite.
Under Assumptions \ref{assumption-A-space},  \ref{assumption-F_def}, \ref{assumption-stochastic},
there exists a martingale solution  $\left(\tilde{\Omega},\tilde{\F},\tilde{\Prob},\tilde{W},\tilde{\newW}, \tilde{\Filtration},u\right)$
 of equation \eqref{eqn-ProblemStratonovich} with the initial data $\mu$
  which  satisfies
	\begin{equation}\label{eqn-stimaEnergy-full}
		                 \tilde\E \Big[ \sup_{t \in [0,T]} \| u(t)\|_H^{2r}+\sup_{t \in [0,T]}  \energy(u(t))^\newr\Big] <\infty, \;\;\mbox{ for every }T>0.
		                  \end{equation}
 Hence, in particular,
	 \begin{align}\label{eqn-propertySolution}
	\tilde{\mathbb E} \sup_{t \in [0,T]} \|u(t)\|^{2r}_{\EA}<\infty,
\;\;\mbox{ for every }T>0.
	\end{align}
Moreover, if $\beta$ satisfies condition \eqref{condizione-beta}, then
\begin{equation}\label{Ebound}
\sup_{t\ge 0} \tilde{\mathbb E} \|u(t)\|^2_{\EA}<\infty.
\end{equation}
\end{theorem}

The proof of the existence part  is based on a technique already used in  \cite{BM} and \cite{Brz+H+W-2019}.
We present the basic steps: the estimates on the Galerkin approximation and its  convergence
to a process which is a martingale solution. To be more precise, in Proposition \ref{MassEstimateGalerkinSolution} we prove
that there exists a unique global solution of the approximated problem and we obtain the a priori estimates in the space $H$.
In Proposition \ref{EstimatesGalerkinSolution} we obtain the a priori estimates for the energy functional and prove the Aldous condition.
These results lead to Corollary \ref{cor-aprioriEA} which together with the Aldous condition implies the tightness.
In Section \ref{Section_Convergence} we prove the convergence of the Galerkin approximations to the martingale solution of our problem.
The estimate \eqref{Ebound} will be proved first for the Galerkin approximation
in Proposition \ref{EstimatesGalerkinSolution} and then it will hold for the limit too.

More general assumptions can be considered only to prove the existence of a martingale solutions;
see Remark \ref{rem-FR1} at the end of this Section.

\subsection{The Galerkin Approximation and a priori estimates}  \label{sectionGalerkin}	

In this Section we introduce the Galerkin approximation. We prove the well-posedness of the approximated equation
and the uniform estimates for the solutions, that are sufficient to apply Corollary \ref{corollaryEstimatesToASconvergence}.

Let us recall that the operator $S$ was introduced in Lemma \ref{S_properties}. By the functional calculus we define the operators
$P_n:H \rightarrow H$ by $P_n:=\pmb{1}_{(0,2^{n+1})}(S)$ for $n \in \mathbb{N}_0$. Since $S$ has the representation
\eqref{eqn_spectral_S}, we observe that  $P_n$ is the orthogonal projection from $H$ to
$H_n:=\operatorname{span}\left\{h_m: m\in\N, \lambda_m< 2^{n+1}\right\}$ and
\begin{align*}
P_n x = \sum_{\lambda_m< 2^{n+1}} \skpH{x}{h_m}h_m, \qquad x\in {H}.
\end{align*}
Note that we have $h_m\in \bigcap_{k\in\N}\D(S^k)$ for $m\in\N$. Since by Lemma \ref{eqn_spectral_S}(i)
$\D(S^k)\hookrightarrow \EA$
for some $k\in\N$, we infer that $H_n$ is a closed subspace of $\EA$ for $n\in\N$. In particular, $H_n$ is
a closed subspace of $\EAdual$.
The fact that the operators $S$ and $A$ commute implies that $P_n$ and $A^\frac{1}{2}$ commute.
Thus we get
\begin{equation*}
\norm{P_n x}_{\EA}^2=\norm{P_n x}_{H}^2+\norm{\sqrtA P_n x}_{H}^2
=\norm{P_n x}_{H}^2+\norm{ P_n \sqrtA x}_{H}^2
\le \norm{x}_{\EA}^2, \quad x \in V.
\end{equation*}
Moreover,
\begin{equation*}
\norm{P_n x}_{H}
\le \norm{x}_{H}, \quad x \in H\qquad \qquad \text{and} \qquad
\norm{P_n x}_{\EAdual}\le \norm{x}_\EAdual, \quad   x \in \EAdual,
\end{equation*}
 and, recalling \eqref{eqn-Gelfand triple}, \eqref{eqn_boundantiderivative} and \eqref{eqn-energy_def},
 \begin{multline}\label{PnHeinsKontraktiv}
\energy(P_n x)
=\tfrac 12  \norm{\sqrtA P_n x}_{H}^2 +\tfrac 1{\alpha+1}\norm{P_n x}_{\LalphaPlusEins}^{\alpha+1}
\lesssim_\alpha \norm{ P_n \sqrtA x}_{H}^2+\norm{P_n x}_{\EA}^{\alpha+1}
\\\le \norm{x}_{\EA}^2+\norm{x}_{\EA}^{\alpha+1}\lesssim 1+\norm{x}_{\EA}^{\alpha+1}, \quad x \in V.
\end{multline}
 We also have
\[\begin{split}
&\lim_{n\to\infty} \|P_nx-x\|_V=0
\\
&\lim_{n\to\infty} \|P_nx-x\|_{L^{\alpha+1}}\le C \lim_{n\to\infty} \|P_nx-x\|_V= 0.
\end{split}\]
By density, we can extend $P_n$ to an operator $P_n: \EAdual\to H_n$ with $\norm{P_n}_{\EAdual\to \EAdual}\le 1$ and
\begin{align}\label{PnInEAdual}
\duality{v}{P_n v}\in \R, \qquad \duality{v}{P_n w}=\skpH{P_n v}{w}, \qquad v\in \EAdual, \quad w\in \EA.
\end{align}

Unfortunately, the operators $P_n$, $n\in\N$,
are not uniformly bounded from $\LalphaPlusEins$ to $\LalphaPlusEins$. This property is crucial in the proof of the a priori estimates of the stochastic terms. To overcome this deficit, in the following Proposition we construct the sequence $\left(S_n\right)_{n\in\N}$ which enjoys the needed properties.

\begin{proposition}\label{PaleyLittlewoodLemma}
	There exists a sequence $\left(S_n\right)_{n\in\N}$ of self-adjoint operators $S_n: H \to H_n$ for $n\in\N$ such that
	$S_n \psi \to \psi$ in $\EA$ for $n\to \infty$ and $\psi \in \EA$  and the uniform norm estimates
	\begin{align}\label{SnUniformlyBounded}
	\sup_{n\in\N}\norm{S_n}_{{\mathscr{L}(H)}}\le 1, \quad \sup_{n\in\N} \norm{S_n}_{\mathscr{L}(\EA)}\le 1,
	\quad \sup_{n\in\N} \norm{S_n}_{\mathscr{L}(L^{\alpha+1})}<\infty
	\end{align}
	hold.
\end{proposition}

\begin{remark}
Somehow, $S_n$ represents a smoothed version of the indicator function $p_n:=\pmb{1}_{(0,2^{n+1})}$ used to
define the operator $P_n$. This allows to use the spectral multiplier theorems to prove the uniform
$L^{\alpha +1}$- boundedness of the sequence $(S_n)_{n \in \mathbb{N}_0}$.
In \cite{Brz+H+W-2019} the same result is proved by means of the abstract Littlewood-Paley theory rather than
spectral multipliers theorems. Our  proof follows the lines of  \cite[Proposition 10]{BHM-2020} and \cite[Lemma 3.14 and Remark 4.15]{Hornung-F_PhD}
with the difference that here we use the classical estimate from Ouhabaz \cite{Ouh_2009} instead of the results from Kunstmann and Uhl \cite{KU_2015}.
\end{remark}

\begin{proof}[Proof of Proposition \ref{PaleyLittlewoodLemma}]
We take a function $\rho \in C^{\infty}_c(0, \infty)$ with supp$(\rho)\in \left[ \frac 12 , 2\right]$ and $\sum_{m \in \mathbb{Z}}\rho(2^{-m}t)=1$ for all $t >0$. For a fixed $n \in \mathbb{N}_0$ we introduce the function
\begin{equation*}
s_n: (0, \infty) \rightarrow \mathbb{C}, \qquad \quad s_n(\lambda):=\sum_{m=-\infty}^{n}\rho\left(2^{-m}\lambda\right)
\end{equation*}
and we see that
\begin{equation*}
s_n(\lambda)=
\begin{cases}
\ 1 &\lambda \in (0, 2^n)
\\
\rho(2^{-n}\lambda) & \lambda \in [2^n, 2^{n+1})
\\
0 & \lambda \ge 2^{n+1}.
\end{cases}
\end{equation*}

We define $S_n:=s_n(S)$ via the functional calculus for self-adjoint operators.
In particular, by Lemma \ref{S_properties}(ii), we have the representation
\begin{equation}
\label{Sn_rep}
S_n x= \sum_{\lambda_m<2^n}(x,h_m)_Hh_m+ \sum_{\lambda_m\in[2^n,2^{n+1})}
\rho(2^{-n}\lambda_m)(x,h_m)_Hh_m, \qquad x \in H,
\end{equation}
from which immediately follows that the range of $S_n$ is contained in $H_n$.
 Since $s_n$ is real-valued and bounded by $1$, the operator $S_n$ is self-adjoint with $\|S_n\|_{\mathscr{L}(H)}\le1$. Moreover, since by Lemma \ref{S_properties}(i), $S_n$ and $A$ commute, we obtain $\|S_n\|_{\mathscr{L}(V)}\le1$ and $S_n \psi \rightarrow \psi$ for all $\psi \in V$, by the convergence property of the functional calculus.

 Finally, the uniform estimate in $L^{\alpha +1}$ is a consequence of the spectral multiplier Theorem \cite[Theorem 7.23]{Ouh_2009} and the Marcinkiewicz interpolation Theorem. It is sufficient to show, see \cite[equation (7.69)]{Ouh_2009}, that $s_n$ satisfies
 \begin{equation*}
 \sup_{\lambda>0}|\lambda ^ks_n^{(k)}(\lambda)|< \infty, \qquad \quad \ k=0,1,2. \end{equation*}
 We have
 \begin{equation*}
  \sup_{\lambda>0}|\lambda^ks_n^{(k)}(\lambda)|
  = \sup_{\lambda \in [2^n, 2^{n+1})}|\lambda^ks_n^{(k)}(\lambda)|
  =\sup_{\lambda \in [2^n, 2^{n+1})}|\lambda^k\frac{{\rm d}^k}{{\rm d}\lambda^k}\rho(2^{-n}\lambda)|
  \le
  2^k\sup_{\lambda >0}|\rho^{(k)}(\lambda)|< \infty,
     \end{equation*}
 for all $k \in \mathbb{N}$. This completes the proof of Proposition \ref{PaleyLittlewoodLemma}.
\end{proof}

Let $u_0 $ be an $\F_0$-measurable $\EA$-valued  random variable  such that    $ \law{\mathbb{P}}{u(0)}=\mu$ on $\mathscr{B}(V)$.
Then, since
$2r$-th moment of the Borel probability measure  $\mu$ on $\EA$ is  finite, we infer that the $2r$-th moment of $u_0$ is finite.

Using the operators $P_n$ and $S_n,$ $n\in\N,$ we approximate our original problem $\eqref{eqn-ProblemStratonovich}$ by the stochastic differential equation in $H_n$ given by	
\begin{equation*}
\left\{
\begin{aligned}
\df u_n(t)&=- \left[\im A u_n(t)+\im P_n F(u_n(t)) +\beta u_n(t) \right] \df t-\im  S_n B(S_n u_n(t)) \circ \df W(t)
\\
&\qquad\qquad -\im  S_n G(S_n u_n(t)) \,\df \newW(t)
\\
u_n(0)&=P_n u_0.
\end{aligned}\right.
\end{equation*}
With the Stratonovich correction term
\begin{align*}
b_n := -\frac{1}{2} \sumM \left(S_n B_m S_n\right)^2,
\end{align*}
the approximated problem can be written in the following  It\^o form 	
\begin{equation}\label{galerkinEquation}
\left\{
\begin{aligned}
\df u_n(t)&= -\left[\im A u_n(t)+\im P_n F(u_n(t))+\beta u_n(t)- b_n (u_n(t)) \right] \df t
\\
&\qquad\qquad -\im  S_n B (S_n u_n(t)) \df W(t)  -\im  S_n G(S_n u_n(t)) \,\df \newW(t) \\
u_n(0)&=P_n u_0.
\end{aligned}\right.
\end{equation}
By the well known theory of finite dimensional stochastic differential equations with locally Lipschitz coefficients,
we get a local well-posedness result for $\eqref{galerkinEquation}.$

\begin{proposition}\label{localSolutionGalerkin}
Suppose  Assumptions $\ref{assumption-A-space}$, $\ref{assumption-F_def}$, $\ref{assumption-stochastic}$ hold. Assume that $\newr \in [1,\infty)$. If
 $u_0 $  is an $\F_0$-measurable $H$-valued  random variable  with finite  $2\newr$-th  moment,
then for each $n\in\N$ there is a unique local solution $u_n$ of $\eqref{galerkinEquation}$ with continuous paths in $H_n$
and maximal existence time $\tau_n,$ which is a blow-up time in the sense that
\[\displaystyle\limsup_{ t \nearrow \tau_n(\omega)}\norm{u_n(t,\omega)}_{H_n}=\infty\]
for almost all $\omega\in\Omega$ with  $\tau_n(\omega)<\infty.$
\end{proposition}

Now we introduce a technical lemma used in many instances for a priori estimates, see \cite[Lemma 5.6]{Brz+H+W-2019}.
\begin{lemma}\label{LemmaYOmegaNachLzweiZeit}
	Let $r\in [1,\infty),$ $\varepsilon>0,$ $T>0$ and $f\in L^r(\Omega,L^\infty(0,T)).$ Then,
	\begin{align*}
	\norm{f}_{L^r(\Omega,L^2(0,t))}\le \varepsilon \norm{f}_{L^r(\Omega,L^\infty(0,t))} +\frac{1}{4\varepsilon}\int_0^t \norm{f}_{L^r(\Omega,L^\infty(0,s))} \df s,\qquad t\in[0,T].
	\end{align*}
\end{lemma}

We prove a priori estimates in the space $H$ so to get global existence. We work on any  bounded time interval.

\begin{proposition}\label{MassEstimateGalerkinSolution}
Suppose Assumptions $\ref{assumption-A-space}$, $\ref{assumption-F_def}$  and  $\ref{assumption-stochastic}$
hold. Assume that $\newr \in [1,\infty)$. If
 $u_0 $ is an $\F_0$-measurable $H$-valued  random variable  with
  $2\newr$-th moment finite,   then for each $n\in\N$  there exists a unique global solution $u_n$ of $\eqref{galerkinEquation}$
with continuous paths in $H_n$. 	Moreover, for every finite $T>0$,
		\begin{equation}
	\label{esHnew}
	\sup_{n \in \mathbb{N}}\mathbb{E} \left[\sup_{t \in [0,T]}\|u_n(t)\|_H^{2\newr} \right] <\infty.
\end{equation}
\end{proposition}
\begin{proof}
\emph{Step 1:} We fix $n\in\N$ and take the unique maximal  solution $(u_n,\tau_n)$ from Proposition $\ref{localSolutionGalerkin}$.
We prove that the solution is global appealing to the Khasmiskii's test for non explosion, see \cite{Kah}.
Let us introduce a sequence $\{\tau_{n,k}\}_{k\in\N}$ of stopping times defined by
	\begin{align*}
	\tau_{n,k}:=\inf \left\{t\ge0: \norm{u_n(t)}_{H_n}\ge k\right\},\qquad k\in\N.
	\end{align*}
In order to prove that $\tau_n=+ \infty$ $\mathbb{P}$-a.s. it is sufficient to find a Liapunov function $\mathscr{V}:H \rightarrow \mathbb{R}$ satisfying
\begin{align}
 &\mathscr{V} \ge 0  \qquad \text{on} \quad H, \label{K1}\\
 &a_k:= \inf \bigl\{ \mathscr{V}(v): \|v\|_{H}\ge k \bigr\} \rightarrow \infty, \qquad \text{as} \quad k \rightarrow \infty, \label{K2}\\
 &\mathbb{E}[\mathscr{V}(u_n(0))]< \infty  \label{K3}
\end{align}
such that
\begin{equation}
\label{K4}
\mathbb{E}[\mathscr{V}(u_n(t\wedge \tau_{n,k}))] \le \mathbb{E}[\mathscr{V}(u_n(0))]+ C\int_0^t (1+\mathbb{E}[\mathscr{V}(u_n(s\wedge \tau_{n,k}))])\, {\rm d}s
\end{equation}
for a constant $C< \infty$ and all $t\ge 0$ and $k \in \mathbb{N}$.
The idea is the following: once such a function $\mathscr{V}$  is found, by the Gronwall's lemma we infer
\begin{equation*}
\mathbb{E}[\mathscr{V}(u_n(t \wedge \tau_{n,k})] \le e^{Ct} (1+ \mathbb{E}[\mathscr{V}(u_n(0))]), \quad t \ge 0,
\end{equation*}
which implies
\begin{equation*}
\mathbb{P}(\tau_{n,k}<t) \le \frac{1}{a_k}\mathbb{E}\left[\mathbbm{1}_{\{\tau_{n,k}<t\}}\mathscr{V}(u_n(t \wedge \tau_{n,k})) \right]
\le \frac{1}{a_k}e^{Ct}\left(1+\mathbb{E}[\mathscr{V}(u_n(0))]\right).
\end{equation*}
Passing to the limit, we get
\begin{equation*}
\lim_{k \rightarrow \infty}\mathbb{P}(\tau_{n,k}<t)\le e^{Ct}(1+\mathbb{E}[\mathscr{V}(u(0))])\lim_{k \rightarrow \infty}\frac{1}{a_k}=0,
\end{equation*}
for every fixed $t\ge 0$. Therefore $\mathbb{P}(\tau_n<t)=\displaystyle\lim_{k \rightarrow \infty}\mathbb{P}(\tau_{n,k}<t)=0$ for every fixed $t \ge 0$, which means $\mathbb{P}(\tau_n=+\infty)=1$.
\newline
Set
\begin{equation}
\label{V_Kas}
\mathscr{V}(v):=\|v\|^2_H, \qquad v \in H_n.
\end{equation}
The function $\mathscr{V} \in C^2(H)$, $\mathscr{V}$ is uniformly continuous on bounded sets and satisfies \eqref{K1} and \eqref{K2}.
Moreover, $\mathbb{E}[\mathscr{V}(P_nu_0)]< \infty$ is equivalent to $\mathbb{E}\left[\|P_nu_0\|^2_H\right]< \infty$.
\newline
In order to derive inequality \eqref{K4} we use the It\^o formula.
The function $\mathscr{V}: H_n \to \R$  defined  in \eqref{V_Kas} is  twice continuously Fr\'{e}chet-differentiable with
	\begin{align*}
	\mathscr{V}'[v]h_1&= 2 \Real \skpH{v}{ h_1}, \qquad
	\mathscr{V}^{\prime\prime}[v] \left[h_1,h_2\right]= 2 \Real \skpH{ h_1}{h_2},
	\end{align*}
	for $v, h_1, h_2\in H_n.$
We look for estimates for $\norm{u_n}_{H}^2$. The It\^o formula for this real process involves only real quantities,
expressed by means of the real part of the complex scalar product in $H$.

For a fixed $v\in H_n$ and $m\in\N$, we have some basic relationships:
	\begin{align*}
	\Real \skpH{ v}{ -\im A v}&=\Real \left[\im \Vert \sqrtA v\Vert_{H}^2\right]=0,		\\
	\Real \skpH{ v}{ -\im P_n F\left( v\right)}&=\Real \duality{\im v}{   F\left( v\right)}=0, \\
	2\Real \skpH{ v}{b_n(v)}&= -\sumM\Real  \skpH{ v}{\left(S_n B_m S_n\right)^2 v}=-\sumM  \Vert{ S_n B_m S_n v}\Vert_{H}^2,
	\\
\Real \skpH{v}{ -\im S_n B(S_n v)f_m}&
= \Real \left[\im\skpH{ v}{S_n   B_m S_n v}\right]=0,	
	\end{align*}
where we used $\eqref{PnInEAdual}$ and \eqref{eqn_nonlinearityComplex} for the second term and the fact that the operator $S_n B_mS_n $ is self-adjoint for the third and four terms.
		
Therefore, by the It\^o formula we get, for $t\ge 0$,
\begin{align*}
	\norm{u_n(t\wedge \tau_{n,k})}_{H}^2=&\norm{P_n u_0}_{H}^2-2 \beta\int_0^{t\wedge \tau_{n,k}}  \|u_n(s)\|_H^2
 \df s
 +\int_0^{t\wedge \tau_{n,k}}
	\Vert  S_n G\bigl(S_n u_n(s)\bigr) \Vert_{\gamma(Y_2,H)}^2\df  s
\\
&
+2 \int_0^{t\wedge \tau_{n,k}} \Real \skpH{u_n(s)}{ -\im S_n G( S_n u_n(s))  \df \textbf{W}(s)}.
	\end{align*}
To estimate the term in the RHS of the above equation we introduce the  stochastic process
\begin{equation*}
\alpha_k(t):=
\pmb{1}_{(t<\tau_{n,k})}.
\end{equation*}
and we notice that
\begin{equation}
\label{trick1}
\int_0^{t \wedge \tau_{n,k}}\|u_n(s)\|^2_H\, {\rm d}s
= \int_0^t \|\alpha_k(s)u_n(s)\|^2_H\,{\rm d}s
\le \int_0^t \|u_n(s \wedge \tau_{n,k})\|^2_H\, {\rm d}s.
\end{equation}	
Therefore, by Proposition \ref{PaleyLittlewoodLemma} and  \eqref{eqn-G crescita} we get
\begin{equation*}
\Vert  S_n G\bigl(S_n u_n(s)\bigr) \Vert_{\gamma(Y_2,H)}^2 \le 2C_1^2+ 2\tilde{C}_1^2\|u_n(s)\|^2_H.
\end{equation*}
So we obtain
\begin{align*}
	\norm{u_n(t\wedge \tau_{n,k})}_{H}^2\le
	&\norm{P_n u_0}_{H}^2+2\int_0^t (C_1^2+(-\beta+\tilde{C}_1^2)
	 \|u_n(s\wedge \tau_{n,k})\|_H^2)\df  s
\\
&
+2 \int_0^{t\wedge \tau_{n,k}} \Real \skpH{u_n(s)}{ -\im S_n G( S_n u_n(s))  \df \textbf{W}(s)}, \qquad t\ge 0.
\end{align*}
Taking now the expected value on both sides, we obtain
\[
	\mathbb{E}\left[\norm{u_n(t\wedge \tau_{n,k})}_{H}^2\right]
	\le
	\mathbb{E}\left[\norm{P_n u_0}_{H}^2\right]+2\int_0^t \Big(C_1^2+(-\beta+\tilde{C}_1^2)
	 \mathbb{E}\left[\|u_n(s\wedge \tau_{n,k})\|_H^2\right]\Big)\df  s .
\]
This proves \eqref{K4} and so we conclude the proof of the global existence of the solution.
\newline
\emph{Step 2:}
        We now prove estimate \eqref{esHnew}.
Let us fix $T>0$. We start from equality
\begin{equation}\label{stima-2q-Galerkin}
\begin{split}
	\norm{u_n(t)}_{H}^2=&\norm{P_n u_0}_{H}^2-2 \beta\int_0^{t}  \|u_n(s)\|_H^2
 \df s
 +\int_0^t
	\Vert  S_n G\bigl(S_n u_n(s)\bigr) \Vert_{\gamma(Y_2,H)}^2\df  s
\\
&
+2 \int_0^t \Real \skpH{u_n(s)}{ -\im S_n G( S_n u_n(s))  \df \textbf{W}(s)}, \mbox{ for $t\in [0,T]$},
	\end{split}\end{equation}
 and we apply the $L^{\newr}(\Omega, L^{\infty}(0,T))$-norm to this identity.
From Proposition \ref{PaleyLittlewoodLemma} and  \eqref{eqn-G crescita}  we immediately get
\begin{equation*}
\left \Vert \int_0^\cdot
	\Vert  S_n G\bigl(S_n u_n(s)\bigr) \Vert_{\gamma(Y_2,H)}^2\df  s \right \Vert_{L^{\newr}(\Omega, L^{\infty}(0,T))}	\le 2C_1^2T + 2\tilde C_1^2 \left\Vert\int_0^T
	\|u_n(s)\|_H^2\df  s \right\Vert_{L^{\newr}(\Omega)}.\
	\end{equation*}
The Minkowski inequality yields
\begin{align*}
	\left \Vert \int_0^T\|u_n(s)\|^2_H\, {\rm d}s\right \Vert_{L^{\newr}(\Omega)}
&\le\left \Vert \int_0^{T}\sup_{r \in[0,s]}\|u_n(r)\|^2_H\, {\rm d}s\right \Vert_{L^{\newr}(\Omega)}
\\
&\le \int_0^T\left \Vert \sup_{r \in[0,s]}\|u_n(r)\|^2_H\right \Vert_{L^{\newr}(\Omega)} \, {\rm d}s
= \int_0^T\left\Vert \|u_n\|^2_H \right\Vert_{L^{\newr}(\Omega,L^{\infty}(0,s))}\, {\rm d}s.
\end{align*}
By means of the Burkholder-Davis-Gundy and the Young inequalities, Lemma \ref{LemmaYOmegaNachLzweiZeit} and \eqref{eqn-G crescita} we obtain,  for some constant $C$ (depending on ${\newr}$)
\[\begin{split}
	  \Big\Vert\int_0^{\cdot} \Real & \skpH{u_n(s)}{ -\im S_n G(S_n u_n(s)) \df \textbf{W}(s)} \Big\Vert_{L^{\newr}(\Omega, L^{\infty}(0,T))}	
	  \\
           &\le C	 \left\Vert \left(\int_0^T\sum_{m=1}^{\infty} \skpH{u_n(s)}{ -\im S_n G(S_n u_n(s))e_m}^2 ds\right)^{1/2}\right\Vert_{L^{\newr}(\Omega)}	
	 \\
           &  \le C \left\Vert\|G(S_nu_n)\|_{\gamma(Y_2,H)}\|u_n\|_H \right\Vert_{L^{\newr}(\Omega, L^2(0,T))}	
           \\
            &   \le C \left\Vert   (C_1+\tilde C_1 \|u_n\|_H) \|u_n\|_H \right\Vert_{L^{\newr}(\Omega, L^2(0,T))}	
	 \\
           &\lesssim  \left\Vert   1+ \|u_n\|_H^2    \right\Vert_{L^{\newr}(\Omega, L^2(0,T))}	
	 \\
	  &\le1+T + \left\Vert\|u_n\|^2_H\right\Vert_{L^{\newr}(\Omega, L^2(0,T))}
	  \\
	  &
	 \le 1+T + \varepsilon \left\Vert\|u_n\|^2_H\right\Vert_{L^{\newr}(\Omega, L^{\infty}(0,T))}
	        + \frac{1}{4 \varepsilon} \int_0^T \left\Vert\|u_n\|^2_H \right\Vert_{L^{\newr}(\Omega, L^{\infty}(0,s))}\,{\rm d}s,
	 \end{split}\]	
for any $\varepsilon>0$.
	
Collecting the above estimates, we obtain for any $n$
\begin{align*}
\left\Vert \|u_n\|_H^{2}\right\Vert_{L^{\newr}(\Omega, L^{\infty}(0,T))}	
\le \left\Vert\|u_0\|^2_H\right\Vert_{L^r(\Omega)}+
 C+CT+
+ \varepsilon C \left\Vert\|u_n\|^2_H\right\Vert_{L^{\newr}(\Omega, L^{\infty}(0,T))}
\\
+\left(2|\beta|+ \frac{C}{4\varepsilon}\right)
\int_0^T\left\Vert\|u_n\|^2_H \right\Vert_{L^{\newr}(\Omega, L^{\infty}(0,s))}\,{\rm d}s.
\end{align*}
If we choose $\varepsilon>0$ small enough, we can apply the Gronwall's lemma and we get that there exists a positive constant $C$, independent of $n$ (but depending on $T$ and other parameters)  such that
\begin{equation*}
\left\Vert \|u_n\|_H^2\right\Vert_{L^{\newr}(\Omega, L^{\infty}(0,T))}\le C,\qedhere
\end{equation*}
for any $n \in \mathbb{N}$.
	\end{proof}

The  next goal  is to find uniform energy estimates for the global solutions of the equation \eqref{galerkinEquation}.
Recall that the nonlinearity $F$ has a real antiderivative denoted by $\Fhat$
and the energy functional $\energy$ is defined in formula \eqref{eqn-energy_def}.

The next Proposition is the key step to show that we can apply Corollary \ref{corollaryEstimatesToASconvergence} to the sequence of solutions $(u_n)_{n\in\N}$ of the equation \eqref{galerkinEquation}.	

\begin{proposition}\label{EstimatesGalerkinSolution}
Fix $\newr\in [1,\infty)$ and let
   $u_0 $  be an $\F_0$-measurable $\EA$-valued  random variable whose $r(\alpha+1)$-th moment is finite.
	 Under Assumptions $\ref{assumption-A-space}$, $\ref{assumption-F_def}$
	 and  $\ref{assumption-stochastic}$ the following assertions hold.
	\begin{enumerate}
		\item[a)]   For every finite $T>0$,
				\begin{equation}
				\label{eqn-stimaEnergy}
		                 \sup_{n\in\N}\E \Big[ \sup_{t \in [0,T]} \|u_n(t)\|^{2r}_H+ \sup_{t \in [0,T]} \energy(u_n(t))^{\newr} \Big] <\infty.
		                  \end{equation}
		\item[b)] The sequence $(u_n)_{n\in\N}$ satisfies the Aldous condition $[A]$ in $\EAdual.$
		\item[c)] If in addition
		                 $\beta$ satisfies condition \eqref{condizione-beta}, then
                                 \begin{equation}\label{Galerkin-Ebound}
                                   \sup_{n \in \mathbb{N}} \sup_{t\ge 0} \mathbb E \|u_n(t)\|^2_{\EA}<\infty.
                                  \end{equation}
	\end{enumerate}
\end{proposition}
\begin{proof}[\emph{ad a):}]
Let us fix $T>0$. Thanks to  \eqref{esHnew} it is enough to prove the estimate for the energy.
By Lemma $\ref{lem-nonlinearAssumptions}$, the restriction of the energy $\energy: H_n \to \R$ is twice
continuously Fr\'{e}chet-differentiable with
	\begin{align*}
	\energy'[v]h_1=&\Real \duality{Av+F(v)}{ h_1}; \\
	\energy^{\prime\prime}[v] \left[h_1,h_2\right]=& \Real \skpH{\sqrtA h_1}{\sqrtA h_2}+\Real \duality{F^{\prime}[v]h_2}{h_1}
	\end{align*}
	for $v, h_1, h_2\in H_n.$
We look for estimates on $\energy(u_n)$.
		
	Notice that
\begin{align*}
	\Real \left[\duality{A v}{ -\im P_n F(v)}+\duality{F(v)}{ -\im A v}\right]
	&=\Real \left[-\duality{A v}{ \im F(v)}+\overline{\duality{  A v}{\im F(v)}}\right]=0,
	\end{align*}
	\begin{align*}
	\Real \skpH{A v}{ -\im A v}=\Real \left[\im \norm{A v}_{H}^2\right]=0,
	\end{align*}
	for all $v\in H_n$,	
and we can use $\eqref{PnInEAdual}$ for
	\begin{align*}
	\Real \duality{F(v)}{ -\im P_n F(v)}=\Real \left[\im \duality{F(v)}{  P_n F(v)}\right]=0.
	\end{align*}
Therefore, the It\^o formula leads to the identity
\begin{equation}\label{ItoEnergyWithoutExponent}
   \begin{split}
	\energy\left(u_n(t)\right) &=	\energy\left(P_n u_0\right)	
	+\int_0^t \Real \duality{A u_n(s)+F(u_n(s))}{ b_n(u_n(s))-\beta u_n(s)} \df s	
	\\
	&+\int_0^t \Real \duality{A u_n(s)+F(u_n(s))}{ -\im \left(S_n B S_n u_n(s)\right)\df W(s)}
	\\
	&+\int_0^t \Real \duality{A u_n(s)+F(u_n(s))}{ -\im S_n G \left(S_n u_n(s)\right)\df \newW(s)}
	 \\
	&+\tfrac{1}{2} \int_0^t  \Vert \sqrtA S_n B S_n u_n(s)\Vert_{\gamma(Y_1,H)}^2\df s
	+\tfrac{1}{2} \int_0^t  \Vert \sqrtA S_n G S_n u_n(s)\Vert^2_{\gamma{(Y_2,H)}}\df s
	\\
	&+\tfrac{1}{2}\int_0^t \sumM \Real \duality{F^{\prime}[u_n(s)] \left(S_n B S_n u_n(s)\right)f_m}{ (S_n B S_n u_n(s))f_m} \df s
	\\
	&+\tfrac{1}{2}\int_0^t \sumM \Real \duality{F^{\prime}[u_n(s)] \left(S_n G( S_n u_n(s))e_m\right)}{ S_n G(S_n u_n(s))e_m} \df s,
\end{split}\end{equation}	
almost surely for all $t\in [0,T].$

Let us introduce the short notation
\begin{equation}\label{def-z-somma}
Z(u):= \|u\|^2_H + 2\energy(u)= \|u\|^2_\EA+2\Fhat(u), \quad u \in \EA.
\end{equation}
We will estimate the various terms that appear in the RHS of
\eqref{ItoEnergyWithoutExponent} in the $L^r(\Omega, L^{\infty}(0,T))$-norm.
To slightly simplify the proof we will neglect here the dissipation term by assuming $\beta \ge 0$.
The case $\beta<0$ can be treated as in the proof of Proposition \ref{EstimatesGalerkinSolution} (c), see Appendix \ref{App_B}.
We set $I_{1,n}(u)(t)=2\int_0^t  \Real \duality{A u(s)+F(u(s))}{ b_n(u(s))} \df s$. We have
\[
\left \Vert I_{1,n}(u_n) \right \Vert_{L^{\newr}(\Omega,L^{\infty}(0,T))}
\le 	\left\Vert \int_0^T\left| \sumM  \Real \duality{A u_n(s)+F(u_n(s))}{(S_nB_mS_n)^2u_n(s)} \right|\df s\right \Vert_{L^{\newr}(\Omega)}.
\]
Using the bounds $\eqref{SnUniformlyBounded}$ and Assumptions \ref{assumption-stochastic}(ii), see also Remark \ref{rem-HS}, by means of the Young inequality we get
\[\begin{split}
\left| \sumM  \Real \duality{A u_n}{(S_nB_mS_n)^2u_n} \right|
&\le  \|A^{\frac 12}u_n\|_H  \sumM \|A^{\frac 12}(S_nB_mS_n)^2u_n)\|_H
\\
&\le \|A^{\frac 12}u_n\|_H  \|B\|^2_{\mathscr{L}(\EA, \gamma(Y_1,\EA))}   \|u_n\|_{\EA}
\\
&\le \|B\|^2_{\mathscr{L}(\EA, \gamma(Y_1,\EA))}  \left(
\|A^{\frac 12}u_n\|^2_H+\|u_n\|^2_H\right)
\\&
\lesssim\|B\|^2_{\mathscr{L}(\EA, \gamma(Y_1,\EA))}Z(u_n).
\end{split}\]
On the other hand, $\eqref{SnUniformlyBounded},$  Assumptions \ref{assumption-stochastic}(ii),
$\eqref{eqn_nonlinearityEstimate}$ and $\eqref{eqn_boundantiderivative}$ lead to
\[\begin{split}
 \left| \sumM \Real \duality{F(u_n)}{(S_nB_mS_n)^2u_n}\right|
 &\le \|F(u_n)\|_{L^{\frac{\alpha+1}\alpha}} \sumM \|(S_nB_mS_n)^2u_n\|_{L^{\alpha+1}}
 \\
 &\lesssim
 \hat F(u_n)\|B\|^2_{\mathscr{L}(L^{\alpha+1},\gamma(Y_1,L^{\alpha+1}))}
 \\
 &\lesssim
 \|B\|^2_{\mathscr{L}(L^{\alpha+1},\gamma(Y_1,L^{\alpha+1}))}Z(u_n).
\end{split}\]
Therefore we obtain
\begin{multline}
\label{e1}
\left \Vert I_{1,n}(u_n)  \right \Vert_{L^{\newr}(\Omega,L^{\infty}(0,T))}
\\
\lesssim
\left(\|B\|^2_{\mathscr{L}(\EA, \gamma(Y_1,\EA))} + \|B\|^2_{\mathscr{L}(L^{\alpha+1},\gamma(Y_1,L^{\alpha+1}))}\right)\int_0^T\|Z(u_n)\|_{L^{\newr}(\Omega,L^{\infty}(0,s))}\, {\rm d}s.
\end{multline}

We set $I_{2,n}(u)(t)=2\int_0^t \Real \duality{A u(s)+F(u(s))}{ -\im S_n B \left(S_n u(s)\right)\df W(s)}$.
We employ the Burkholder-Davis-Gundy inequality to get
\[
\left \Vert   I_{2,n}(u_n)     \right \Vert_{L^{\newr}(\Omega,L^{\infty}(0,T))}
\le 2
\left \Vert \left(\sumM|\Real \duality {A u_n+F(u_n)}{ -\im (S_n B S_n u_n)f_m}|^2 \right)^{\frac 12}\right \Vert_{L^{\newr}(\Omega,L^2(0,T))} .
\]
Using the bounds $\eqref{SnUniformlyBounded},$  Assumptions \ref{assumption-stochastic}(ii) and the Young inequality, we estimate
\[\begin{split}
\left(  \sumM  |\Real \duality{A u_n}{ -\im \left( S_n B S_n u_n\right) f_m}|^2 \right)^{\frac 12}
&\le \|A^{\frac 12}u_n\|_H \|B(S_nu_n)\|_{\gamma(Y_1, \EA)}
\\&
\le \|A^{\frac 12}u_n\|_H  \|B\|_{\mathcal L (\EA,\gamma(Y_1, \EA))}   \|u_n\|_{\EA}
\\&
\le \|B\|_{\mathscr{L}(\EA,\gamma(Y_1, \EA))}  \left( \|A^{\frac 12}u_n\|^2_H+\|u_n\|^2_H\right)
\\&
\lesssim \|B\|_{\mathscr{L}(\EA,\gamma(Y_1, \EA))}  Z(u_n).
\end{split}\]
On the other hand $\eqref{SnUniformlyBounded},$  Assumptions \ref{assumption-stochastic}(ii), $\eqref{eqn_nonlinearityEstimate}$ and $\eqref{eqn_boundantiderivative}$ lead to
\[\begin{split}
 \left(\sumM|\Real \duality {F(u_n)}{ -\im \left(S_n B S_n u_n\right))f_m}|^2 \right)^{\frac 12}
 &\lesssim \|B\|_{\mathscr{L}(L^{\alpha+1}, \gamma(Y_1, L^{\alpha+1}))} \hat F(u_n)
 \\
&\lesssim \|B\|_{\mathscr{L}(L^{\alpha+1}, \gamma(Y_1, L^{\alpha+1}))} Z(u_n).
 \end{split}\]
Therefore, the latter three estimates with Lemma $\ref{LemmaYOmegaNachLzweiZeit}$
 yield
\begin{align}
\label{e2}
 \Vert I_{2,n}(u) \Vert_{L^{\newr}(\Omega,L^{\infty}(0,T))}
&
\lesssim
\left(\|B\|_{\mathscr{L}(\EA, \gamma(Y_1,\EA))} + \|B\|_{\mathscr{L}(L^{\alpha+1},\gamma(Y_1,L^{\alpha+1}))}\right)\|Z(u_n)\|_{L^{\newr}(\Omega,L^2(0,T))}
\notag \\&
\lesssim \varepsilon \|Z(u_n)\|_{L^{\newr}(\Omega, L^{\infty}(0,T))}+\frac{1}{4\varepsilon}\int_0^T \|Z(u_n)\|_{L^{\newr}(\Omega, L^{\infty}(0,s))}\, {\rm d}s,
\end{align}
for any $\varepsilon>0$.

We set $I_{3,n}(u)(t)=2\int_0^t   \Real \duality{A u(s)+F(u(s))}{ -\im S_n G \left(S_n u(s)\right)\df \newW(s)} $.
Also for this stochastic integral we employ  the Burkholder-Davis-Gundy's inequality to get
\[
\left \Vert  I_{3,n}(u_n)\right \Vert_{L^{\newr}(\Omega,L^{\infty}(0,T))}
\le 2 \left \Vert \left(\sumM|\Real \duality {A u_n+F(u_n)}{ -\im S_n G \left(S_n u_n\right))e_m}|^2 \right)^{\frac 12}\right \Vert_{L^{\newr}(\Omega,L^2(0,T))}.
\]
Using the bounds $\eqref{SnUniformlyBounded}$ and Assumptions \ref{assumption-stochastic}(iii), exploiting the Young inequality we obtain
\[\begin{split}
 \left(\sumM|\Real \duality{A u_n}{ -\im \left( S_n G (S_n u_n)\right)e_m} |^2 \right)^{\frac 12}
 &\lesssim \|A^{\frac12}u_n\|_H \|G(S_nu_n)\|_{\gamma(Y_2, \EA)}
 \\&
 \lesssim \|A^{\frac12 }u_n\|_H \left(C_2+ \tilde C_2\|u_n\|_{\EA}\right)
\\&
 \lesssim 1+ Z(u_n).
 \end{split}\]
Moreover, $\eqref{SnUniformlyBounded},$  Assumptions \ref{assumption-stochastic}(iii), $\eqref{eqn_nonlinearityEstimate}$,
$\eqref{eqn_boundantiderivative}$, \eqref{ineq-F} and the Young inequality lead to
 \[\begin{split}
 \left(\sumM|\Real \duality {F(u_n)}{ -\im S_n G \left(S_n u_n\right)e_m}|^2 \right)^{\frac 12}
 &\le \|F(u_n)\|_{L^{\frac{\alpha+1}{\alpha}}}\|G(S_nu_n)\|_{\gamma(Y_2,L^{\alpha+1})}
 \\
 &\lesssim \|u_n\|^{\alpha}_{L^{\alpha+1}}\left(C_3+ \tilde C_3\|u_n\|_{L^{\alpha+1}} \right)
 \\
 &\lesssim [\hat F(u_n)]^{\frac{\alpha}{\alpha+1}}+ \hat F(u_n)
  \\
 &\lesssim 1+ \hat F(u_n)
 \\
 &\lesssim  1 +Z(u_n).
  \end{split}\]
 Therefore, collecting the previous estimates, by means of Lemma $\ref{LemmaYOmegaNachLzweiZeit}$ we obtain
\begin{align}
 \label{e3}
 \left \Vert   I_{3,n}(u_n) \right \Vert_{L^{\newr}(\Omega,L^{\infty}(0,T))}
 &
\lesssim
\|1+Z(u_n)\|_{L^{\newr}(\Omega, L^2(0,T))}
\\&
\lesssim 1+ \varepsilon \|Z(u_n)\|_{L^{\newr}(\Omega,L^{\infty}(0,T))}+ \frac{1}{4 \varepsilon}\int_0^T \|Z(u_n)\|_{L^{\newr}(\Omega,L^{\infty}(0,s))} \, {\rm d}s,
\end{align}
for any $\varepsilon>0$.

We set $I_{4,n}(u)(t)= \int_0^t  \Vert \sqrtA S_n B S_n u(s)\Vert_{\gamma(Y_1,H)}^2\df s $.
 Exploiting Remark \ref{rem-HS} and bounds $\eqref{SnUniformlyBounded}$, we easily obtain
\begin{equation}
\label{e4}
  \left\Vert I_{4,n}(u_n) \right\Vert_{L^{\newr}(\Omega, L^{\infty}(0,T))}
 \lesssim \|B\|^2_{\mathscr{L}(\EA,\gamma(Y_1, \EA))}\int_0^T \|Z(u_n)\|_{L^{\newr}(\Omega,L^{\infty}(0,s))}\,{\rm d}s.
\end{equation}

We set $I_{5,n}(u)(t)=\int_0^t \Vert \sqrtA S_n G (S_n u(s))\Vert_{\gamma(Y_2,H)}^2\df s $.
Similarly, estimate \eqref{crescitaGEA} and bounds \eqref{SnUniformlyBounded} yield
\[
\Vert \sqrtA S_n G (S_n u_n)\Vert_{\gamma(Y_2,H)}^2\le (C_2 +\tilde C_2 \|u_n\|_V)^2
 \lesssim 1+ Z(u_n) .
\]
Hence
\begin{equation}
\label{e5}
 \left\Vert  I_{5,n}(u_n)   \right\Vert_{L^{\newr}(\Omega, L^{\infty}(0,T))}
 \lesssim 1+ \int_0^T \|Z(u_n)\|_{L^{\newr}(\Omega,L^{\infty}(0,s))}\,{\rm d}s.
\end{equation}

We set $I_{6,n}(u)(t)=\int_0^t \sumM \Real \duality{F^{\prime}[u(s)] \left(S_n B (S_n u(s))f_m\right)}{ S_n B (S_n u(s))f_m} \df s$.
From \eqref{eqn_deriveNonlinearBound}, Remark \ref{rem-HS}, \eqref{SnUniformlyBounded}  and \eqref{eqn_boundantiderivative} we get
\[\begin{split}
\left| \sumM \Real \duality{F^{\prime}[u_n] \left(S_n B S_n u_n\right)f_m}{ (S_n B S_n u_n) f_m} \right|
& \lesssim
  \|B\|^2_{\mathscr{L}(L^{\alpha+1},\gamma(Y_1,L^{\alpha+1}))} \|u_n\|^{\alpha+1}_{L^{\alpha+1}}
\\&
 \lesssim  \|B\|^2_{\mathscr{L}(L^{\alpha+1},\gamma(Y_1,L^{\alpha+1}))} \hat F(u_n).
\end{split}\]
Hence,
\begin{equation}\label{e6}
\left\Vert   I_{6,n}(u_n)    \right\Vert_{L^{\newr}(\Omega, L^{\infty}(0,T))}
 \lesssim \|B\|^2_{\mathscr{L}(L^{\alpha+1},\gamma(Y_1,L^{\alpha+1}))}
 \int_0^T \|Z(u_n)\|_{L^{\newr}(\Omega,L^{\infty}(0,s))}\,{\rm d}s.
\end{equation}

We set $I_{7,n}(u)(t)=\int_0^t \sumM \Real \duality{F^{\prime}[u(s)] \left(S_n G( S_n u(s))e_m\right)}{ S_n G(S_n u(s))e_m} \df s $.
By means of   \eqref{ineq-Fp}, \eqref{crescitaGL}, \eqref{SnUniformlyBounded} and the Young inequality
 we get
\[\begin{split}
\left|  \sumM \Real \duality{F^{\prime}[u_n] \left(S_n G( S_n u_n)e_m\right)}{ S_n G(S_n u_n)e_m}    \right|
&\le
\|F'[u_n]\|_{L^{\alpha+1} \rightarrow L^{\frac{\alpha+1}{\alpha}}}
\sumM \|S_n G( S_n u_n)e_m\|^2 _{L^{\alpha+1}}
\\
&\lesssim
\hat F(u_n)^{\frac{\alpha-1}{\alpha+1}} \|G( S_n u_n)\|^2 _{\gamma(Y_2,L^{\alpha+1})}
\\
&\le
\hat F(u_n)^{\frac{\alpha-1}{\alpha+1}}  (C_3+\tilde C_3 \|u_n\|_{L^{\alpha+1}})^2
\\
&\lesssim \hat F(u_n)^{\frac{\alpha-1}{\alpha+1}}+\hat F(u_n)
\\
&\lesssim 1+\hat F(u_n) \le 1+ Z(u_n).
\end{split}\]
Therefore,
\begin{equation}
\label{e7}
\left\Vert I_{7,n}(u_n)  \right\Vert_{L^{\newr}(\Omega, L^{\infty}(0,T))}
\lesssim
1+\int_0^T \|Z(u_n)\|_{L^{\newr}(\Omega,L^{\infty}(0,s))}\,{\rm d}s.
\end{equation}

We now go back to \eqref{ItoEnergyWithoutExponent}; using \eqref{e1}-\eqref{e7} we finally obtain
\begin{equation}\label{stima-finale-dimo-7-5}
\begin{split}
\|\energy(u_n)\|_{L^{\newr}(\Omega,L^{\infty}(0,T))}
&\le
\left\Vert\energy(P_nu_0)\right\Vert_{L^r(\Omega)}+C
+C  \int_0^T \|Z(u_n)\|_{L^{\newr}(\Omega,L^{\infty}(0,s))}\, {\rm d}s
+2 \varepsilon \|Z(u_n)\|_{L^{\newr}(\Omega,L^{\infty}(0,T))}
\\
&\le C\left(1+\Vert \|u_0\|_V^{\alpha+1}\Vert_{L^r(\Omega)}\right)
  +C
+
C  \int_0^T 2\|\energy(u_n)\|_{L^{\newr}(\Omega,L^{\infty}(0,s))}\, {\rm d}s
\\
&\quad+4 \varepsilon \|\energy(u_n)\|_{L^{\newr}(\Omega,L^{\infty}(0,T))}
+(CT+2 \varepsilon)   \left\Vert\|u_n\|^2_H\right\Vert_{L^{\newr}(\Omega,L^{\infty}(0,T))}
\end{split}\end{equation}
for some positive constant $C$, independent of $n$. We have estimated the initial data thanks to \eqref{PnHeinsKontraktiv}.
If we choose $\varepsilon$ sufficiently small and bear in mind  the a priori estimate \eqref{esHnew} of Proposition \ref{MassEstimateGalerkinSolution}, we get
\[
\|\energy(u_n)\|_{L^{\newr}(\Omega,L^{\infty}(0,T))} \lesssim 1+  \int_0^T \|\energy(u_n)\|_{L^{\newr}(\Omega,L^{\infty}(0,s))}\, {\rm d}s.
\]
By the Gronwall Lemma we deduce the assertion of Proposition \ref{EstimatesGalerkinSolution}, part a).
\end{proof}
\begin{proof}[\emph{ad b):}]
We choose and fix $T>0$. Let us now prove the Aldous condition.
The proof of this part is an extension of the proof of part (b) in \cite[Proposition 5.7]{Brz+H+W-2019}.
We will provide just the main steps.
We have
	\begin{align*}
	u_n(t)- P_n u_0=&  -\im \int_0^t A u_n(s) \df s-\im \int_0^t P_n F(u_n(s)) \df s+\int_0^t b_n(u_n(s)) \df s
	-\beta \int_0^t u_n(s)\, {\rm d}s	\\
	\hspace{1cm}&- \im \int_0^t S_n B (S_n u_n(s)) \df W(s)
	-\im \int_0^t S_nG(S_n u_n(s))\, {\rm d}\textbf{W}(s)
	\\
	=&:J_1(t)+J_2(t)+J_3(t)+J_4(t)+J_5(t)+J_6(t),
	\end{align*}
	in $H_n$ almost surely for all $t\in [0,T]$ and therefore
	\begin{align*}
	\Vert u_n((\tau_n+\theta)\land T)-u_n(\tau_n)\Vert_\EAdual\le\sum_{k=1}^{6} \Vert J_k((\tau_n+\theta)\land T)-J_k(\tau_n)\Vert_{\EAdual}
	\end{align*}
	for each sequence $\left(\tau_n\right)_{n\in\N}$ of stopping times and $\theta>0.$
	Hence, we get
	\begin{multline}\label{AldousStartingEstimate}
	\Prob \left\{\Vert u_n((\tau_n+\theta)\land T)-u_n(\tau_n)\Vert_\EAdual\ge \eta \right\}\le \sum_{k=1}^6\Prob \left\{\Vert J_k((\tau_n+\theta)\land T)-J_k(\tau_n)\Vert_{\EAdual}\ge \frac{\eta}{6}\right\},
	\end{multline}
	for a fixed $\eta>0$. We aim to apply the Chebyshev inequality and estimate the expected value of each term in the sum. 	
	Proceeding as in the proof of part (b) in \cite[Proposition 5.7]{Brz+H+W-2019} one obtains the following estimates:
	\begin{align*}
	\E\Vert J_1((\tau_n+\theta)\land T)-J_1(\tau_n)\Vert_{\EAdual}&\le  \theta K_1,
	\end{align*}
	\begin{align*}
	\E\Vert J_2((\tau_n+\theta)\land T)&-J_2(\tau_n)\Vert_{\EAdual}
	\le \theta K_2,
	\end{align*}
	\begin{align*}
	\E\Vert J_3((\tau_n+\theta)\land T)-J_3(\tau_n)\Vert_{\EAdual}
	\le \theta K_3,
	\end{align*}

\begin{align*}
	\E\Vert J_5((\tau_n+\theta)\land T)-J_5(\tau_n)\Vert_{\EAdual}^2&
	\le \theta K_5.
	\end{align*}
We use part a) to estimate
\begin{align*}
	\E\Vert J_4((\tau_n+\theta)\land T)&-J_4(\tau_n)\Vert_{\EAdual}
	\le \beta\mathbb{E} \left\Vert\int_{\tau_n}^{(\tau_n+\theta)\wedge T} u_n(s)\, {\rm d}s\right\Vert_{H}
	\\
	&\le \beta\mathbb{E}\int_{\tau_n}^{(\tau_n+\theta)\wedge T} \|u_n(s)\|_{H}\, {\rm d}s
	\lesssim \theta \beta\mathbb{E} \left[\sup_{s \in [0,T]}\|u_n(s)\|_H\right] \le \theta K_4.
		\end{align*}
The It\^o isometry and \eqref{eqn-G crescita} yield
\begin{align*}
\E\Vert J_6((\tau_n+\theta)\land T)&-J_6(\tau_n)\Vert^2_{\EAdual}
	\le \mathbb{E} \left\Vert \int_{\tau_n}^{(\tau_n+\theta)\wedge T}S_nG(S_nu_n(s))\, {\rm d}\newW(s)\right\Vert^2_H
	\\
	& = \mathbb{E}\int_{\tau_n}^{(\tau_n+\theta)\wedge T}\|S_nG(S_nu_n(s))\|^2_{\gamma(Y,H)}\, {\rm d}s
	\\
	&\le \mathbb{E}\int_{\tau_n}^{(\tau_n+\theta)\wedge T}\left(2C_1^2+2\tilde C_1^2\|u_n(s)\|^2_H \right)\, {\rm d}s
	\\
	&\lesssim \mathbb{E}\int_{\tau_n}^{(\tau_n+\theta)\wedge T}\left(1+\|u_n(s)\|^2_H\right)\, {\rm d}s
	\lesssim \theta + \theta \mathbb{E} \left[ \sup_{s \in [0,T]}\|u_n(s)\|_H^2\right] \lesssim \theta K_6.
\end{align*}

	By the Chebyschev inequality, we obtain for any given $\eta>0$
	\begin{align}\label{AldousEins}
	&\hspace{-1truecm}\Prob \left\{\Vert J_k((\tau_n+\theta)\land T)-J_k(\tau_n)\Vert_{\EAdual}\ge \frac{\eta}{6}\right\} \\
&\le \frac{6}{\eta} \E\Vert J_k((\tau_n+\theta)\land T)-J_k(\tau_n)\Vert_{\EAdual}\le \frac{ 6K_k \theta}{\eta}
\mbox{ 	for $k\in \{1,2,3,4\}$}
\nonumber
	\end{align}
 and
	\begin{align}\label{AldousZwei}
&\hspace{-1truecm}	\Prob \left\{\Vert J_k((\tau_n+\theta)\land T)-J_k(\tau_n)\Vert_{\EAdual}\ge \frac{\eta}{6}\right\}\\
&\le \frac{36}{\eta^2} \E\Vert J_k((\tau_n+\theta)\land T)-J_k(\tau_n)\Vert_{\EAdual}^2\le \frac{36 K_k \theta}{\eta^2}
\mbox{ for $k \in \{5,6\}$.}
\nonumber 	\end{align}
	
	Let us fix $\varepsilon>0$ and $\eta>0$. Due to  estimates $\eqref{AldousEins}$ and $\eqref{AldousZwei}$ we can choose $\delta_1,\dots,\delta_{6}>0$ such that
	\begin{align*}
	\Prob \left\{\Vert J_k((\tau_n+\theta)\land T)-J_k(\tau_n)\Vert_{\EAdual}\ge \frac{\eta}{6}\right\}\le \frac{\varepsilon}{6}
	\end{align*}
	for $0<\theta\le \delta_k$ and $k=1,\dots,6.$ With $\delta:= \min \left\{\delta_1,\dots,\delta_{6}\right\},$  using $\eqref{AldousStartingEstimate}$ we get
	\begin{align*}
	\Prob \left\{\Vert J_k((\tau_n+\theta)\land T)-J_k(\tau_n)\Vert_{\EAdual}\ge \eta\right\}\le \varepsilon
	\end{align*}
	for all $n\in\N$, $k=1,...6$ and $0<\theta\le \delta$ and therefore, the Aldous condition $[A]$ holds in $V^\ast.$	

\end{proof}
\begin{proof}[\emph{ad c):}]
This point has some similarities with point (a) proved above.
We prove it in Appendix \ref{App_B}.
\end{proof}

As an immediate consequence of Propositions \ref{MassEstimateGalerkinSolution} and
\ref{EstimatesGalerkinSolution} and the fact that $\|u\|^{2r}_V \lesssim_r \|u\|^{2r}_H +\energy(u)^r$ we obtain
\begin{corollary}
\label{cor-aprioriEA}
Fix $\newr\in [1,\infty)$ and let
 $u_0 $  be  an $\F_0$-measurable $\EA$-valued random variable
  with finite $r(\alpha+1)$-th moment. Then,
under Assumptions $\ref{assumption-A-space}$, $\ref{assumption-F_def}$, $\ref{assumption-stochastic}$,
  the following bound holds
		\begin{align}\label{eqn-5.22}
		\sup_{n\in\N}\E \Big[\sup_{t\in[0,T]} \norm{u_n(t)}_\EA^{2\newr}\Big]<\infty, \;\;\; \mbox{ for every }T>0.
		\end{align}
\end{corollary}

\subsection{Convergence. Proof of the first part of Theorem \ref{thm-existence}}\label{Section_Convergence}
\hfill

In this Section we prove part (i) of Theorem \ref{thm-existence}, that is the existence of a martingale solution of \eqref{eqn-ProblemStratonovich} which satisfies
conditions \eqref{eqn-stimaEnergy-full} and \eqref{eqn-propertySolution}.

We construct a solution to equation \eqref{eqn-ProblemStratonovich} by a suitable limiting process in the Galerkin equation \eqref{galerkinEquation}, exploiting the results of the previous sections.
Proposition \ref{EstimatesGalerkinSolution}(b) and Corollary \ref{cor-aprioriEA} provide the tightness to pass to the limit.
One proceeds as in \cite{Brz+H+W-2019} and \cite{BM}. We will just provide the main steps of the proof and refer to these papers for more details.

Let us recall from Section \ref{CompactnessSection} the definition of the space $Z_\infty$: 
\begin{align}
\label{eqn-Z_infty-2}
Z_\infty&= {C([0,\infty);\EAdual)} \cap {L^{\alpha+1}_{\mathrm{loc}}([0,\infty);\LalphaPlusEins)}
\cap C_w([0,\infty);\EA). 
\end{align}
\dela{
\begin{align*}
Z_T={C([0,T];\EAdual)}\cap {L^{\alpha+1}(0,T;L^{\alpha+1})} \cap C_w([0,T];\EA).
\end{align*}
}

Proposition \ref{EstimatesGalerkinSolution} (b)  and Corollary \ref{cor-aprioriEA}
provide the a priori estimates on the Galerkin approximation sequence; hence, this is tight  in $Z_\infty$ thanks to Proposition \ref{TightnessCriterion}.
Then by means of Corollary \ref{corollaryEstimatesToASconvergence}  we get the convergence.
More precisely,  there exist a subsequence $\left(u_{n_k}\right)_{k\in\N}$, a probability space $\left(\hat{\Omega},\hat{\F},\hat{\Prob}\right)$ and random variables $v_k, v:\hat{\Omega} \rightarrow Z_\infty$ with $\text{Law}_{\hat{\Prob}}(v_k)=\text{Law}_{\mathbb{P}}(u_{n_k})$ such that
\begin{equation}
\label{convergencev_n}
v_k\to v  \ \quad \hat{\Prob}-\text{a.s.} \quad  \text{in $Z_\infty$ for $k\to \infty$}.
\end{equation}
Moreover, arguing as in the proof of \cite[ Proposition 6.1(b)]{Brz+H+W-2019}, we infer that  $v_k \in  C\left([0,\infty),H_k\right)$ $\hat{\Prob}$-a.s. and \begin{equation}
\sup_{k\in\N} \hat{\mathbb{E}} \left[ \sup_{t \in [0,T]}\norm{v_k(t)}_{H}^{2\newr}+ \sup_{t \in [0,T] }\mathcal{E}(v_k(t))^r\right]<\infty,\quad \text{for any $T>0$,}
\end{equation}
from which, keeping in mind that $\|\cdot\|^{2r}_V \lesssim_r \|\cdot\|^{2r}_H +\energy(\cdot)^r$, we also infer
\begin{align}
\label{STAR}
\sup_{k\in\N} \hat{\mathbb{E}} \left[ \sup_{t \in [0,T]}\norm{v_k(t)}_V^{2\newr}\right]<\infty, \quad \text{for any $T>0$,}
.
\end{align}
Let us remark that
we also get
\begin{equation}\label{eqn-5.23}
\hat{\mathbb{E}} \left[ \sup_{t \in [0,T]}\norm{v(t)}_{H}^{2\newr}+ \sup_{t \in [0,T] }\mathcal{E}(v(t))^r\right] \le \liminf_{k} \hat{\mathbb{E}} \left[ \sup_{t \in [0,T]}\norm{v_k(t)}_{H}^{2\newr}+ \sup_{t \in [0,T] }\mathcal{E}(v_k(t))^r\right]<\infty.
\end{equation}
Hence, by Remark \ref{bound_rem}
\begin{equation}\label{confronto-norme}
\hat{\mathbb{E}} \left[ \sup_{t \in [0,T]}\norm{v(t)}_V^{2\newr}\right]
< \infty,\quad \text{for any $T>0$}
.
\end{equation}
The last two inequalities  prove respectively inequalities  \eqref{eqn-stimaEnergy-full} and \eqref{eqn-propertySolution}.

Since each $v_k$ has the same law as $u_{n_k}$, it is a martingale solution to equation \eqref{galerkinEquation}: expanding the arguments in \cite[Lemma 6.3]{Brz+H+W-2019} one can easily prove that each process $N_n: \hat{\Omega} \times [0,\infty) \rightarrow H_n$ defined by
\begin{align}
\label{Nn}
N_n(t)=-v_n(t)+ P_n u_0&+ \int_0^t \left[-\im A v_n(s)-\im P_n F(v_n(s))+b_n(v_n(s))-\beta v_n(s)\right] \df s
\end{align}
for $n\in \N$ and $t\in[0,\infty)$ is  an ${H}$-valued continuous square integrable martingale w.r.t. the filtration $\hat{\F}_{n,t}:=\sigma \left(v_n(s): s\le t\right)$.
As far as its quadratic variation process is concerned, in order to exploit classical results presented in a real  Hilbert space setting, see e.g., \cite{DPZ1},
we work with the real inner product in $H$ (and $H_n$). This means that the  quadratic variation process $\quadVar{N_n}$ is defined through
the property that for any $\psi,\phi \in H_n$ the process
\[
\skpHReal{N_n(t)}{\psi}\skpHReal{N_n(t)}{\phi}-\skpHReal{\quadVar{N_n}_t\psi }{\phi}, \qquad t \in [0,\infty),
\]
is a martingale.
Therefore we find that
	\begin{multline}
	\label{Nnqv}
	\quadVar{N_n}_t\psi
	= \sumM \int_0^t \im S_n B_m S_n v_n(s) \skpHReal{\im S_n B_m S_n v_n(s)}{\psi} \df s
	\\
	+\sumM \int_0^t \im S_n G(S_n v_n(s))e_m \skpHReal{\im S_n G(S_n v_n(s))e_m}{\psi} \df s
	\end{multline}
	for all $\psi \in {H}$ and $t\in[0,\infty)$.
The martingale property can be rephrased as
\begin{align}
\label{1lim}
	\hat{\mathbb{E}} \left[ \skpHReal{N_n(t)-N_n(s)}{\psi} h(v_n|_{[0,s]})\right]=0
	\end{align}
	and
	\begin{align}
	\label{2lim}
	\hat{\mathbb{E}} \Bigg[ \Bigg(&\skpHReal{N_n(t)}{\psi}\skpHReal{N_n(t)}{\varphi}-\skpHReal{N_n(s)}{\psi}\skpHReal{N_n(s)}{\varphi}
	\notag\\
	&\hspace{1 cm}-\sumM \int_s^t  \skpHReal{\im S_n B_m S_n v_n(r)}{\psi} \skpHReal{\im S_n B_m S_n v_n(r)}{\varphi} \df r
	\notag\\
	&\hspace{1 cm}-\sumM \int_s^t  \skpHReal{\im S_n G (S_n v_n(r))e_m}{\psi} \skpHReal{\im S_n G(S_n v_n(r))e_m}{\varphi} \df r
	\Bigg) h(v_n|_{[0,s]})\Bigg]=0,
	\end{align}		
	for all $0<s<t<\infty$,  $\psi, \varphi \in {H}$ and bounded continuous functions $h$ on $C([0,\infty),H_n).$

It is useful at this point to introduce the following notation. Let $\iota: \EA \hookrightarrow {H}$ be the usual embedding,
$\iota^\ast: {H} \rightarrow \EA$ its Hilbert-space-adjoint,
i.e. $\skpH{\iota u}{v}=\skp{u}{\iota^\ast v}_\EA$ for $u\in\EA$ and $v\in{H}.$
Further, we set $L:=\left(\iota^\ast\right)': \EAdual \rightarrow {H}$  as the dual operator of $\iota^\ast$ with respect to the
Gelfand triple $\EA\hookrightarrow H\eqsim H^\ast\hookrightarrow\EAdual.$
	\newline
Let us introduce the process
\begin{align*}
N(t):=-v(t)+ u_0+ \int_0^t \left[-\im A v(s)-\im  F(v(s))+b(v(s))-\beta v(s)\right] \df s, \quad t\in[0,T],
\end{align*}
which has $\EAdual$-valued continuous paths. Moreover, $N(t) \in L^2(\hat\Omega,\EAdual)$.

We now use the martingale property of $N_n$ for $n\in\N$ and a passage to the limit in \eqref{1lim} and \eqref{2lim}
to show that $L N$ is an ${H}$-valued continuous square integrable martingale with respect to the filtration
$\hat{\Filtration}=\left(\hat{\F}_t\right)_{t\in[0,T]},$ where $\hat{\F}_{t}:=\sigma \left(v(s): s\le t\right)$,
with quadratic variation given by
	\begin{align}
	\label{qvN}
	\quadVar{L N}_t\zeta
	=\sumM \int_0^t\left[\im  L B_m v(s) \skpHReal{\im L B_m v(s)}{\zeta} + \im  L G(v(s))e_m
	\skpHReal{\im L G( v(s))e_m}{\zeta}\right]\df s,
		\end{align}
	for all $\zeta \in {H}.$	
We just provide the main steps of the limiting process (for a detailed proof see \cite{Brz+H+W-2019})
and the computations of what is new.

Taking the limit as $n \rightarrow \infty$ in \eqref{1lim}, for $\psi \in \EA$, we obtain
\begin{footnote}{
For the proof see \cite[Lemma 6.2, Lemma 6.4 Steps 1-2]{Brz+H+W-2019}. Here, in addition, we have to consider the
convergence of the damping term but the needed estimates can be obtained rather easily.}
\end{footnote}
\begin{equation}
\label{martH}
\hat{\mathbb{E}} \left[\langle N(t)-N(s),\psi\rangle h(v|_{[0,s]}) \right]=0.
\end{equation}
Then we take the limit as $n\to\infty$  in \eqref{2lim}, for $\phi, \psi \in \EA$, and obtain
	\begin{align}
	\label{martes2}
	\hat{\mathbb{E}}\Bigg[ \Bigg(
	\dualityReal{N(t)}{\psi}&\dualityReal{N(t)}{\varphi}-\dualityReal{N(s)}{\psi}\dualityReal{N(s)}{\varphi}	\\ \notag
	&-\sumM \int_s^t  \dualityReal{ B_m v(r)}{\psi} \dualityReal{B_m v(r)}{\varphi} \df r
	\\\notag
	&-\sumM \int_s^t  \dualityReal{G v(r)e_m}{\psi} \dualityReal{G v(r)e_m}{\varphi} \df r
	\Bigg) h(v|_{[0,s]})\Bigg]=0.
	\end{align}
To prove the convergence of the first three terms in \eqref{2lim}
 one proceeds as in \cite[Lemma 6.4 Steps 3-4]{Brz+H+W-2019}.
The convergence of the four
th term is proved in the following lemma whose proof is postponed
to Appendix \ref{App_C}.
\begin{lemma}
 \label{convquadvari}
 Under Assumption \ref{assumption-stochastic}(iii), for all $0\le s\le t < \infty$, $\psi, \varphi\in \EA$, $h$ a bounded continuous function on ${C([0,\infty);\EAdual)}$, we have
 \begin{multline*}
\lim_{n \rightarrow \infty}\hat{\mathbb{E}}
 \Bigg[
\Bigg(
\sumM \int_s^t  \skpHReal{S_n G (S_n v_n(r))e_m}{\psi} \skpHReal{S_n G (S_n v_n(r))e_m}{\varphi} \df r
\Bigg) h(v|_{[0,s]})
\Bigg]
\\
=
\hat{\mathbb{E}} \Bigg[
\Bigg(
\sumM \dualityReal{ G  v(r)e_m}{\psi} \dualityReal{G  v(r)e_m}{\varphi}  \df r
\Bigg) h(v|_{[0,s]})
\Bigg].
\end{multline*}
\end{lemma}

Now let $\eta, \zeta \in H$. Then $i^\ast\eta$, $i^\ast\zeta \in \EA$ and for every $z$ in $\EAdual$ we have $\skpHReal{Lz}{\eta}=\dualityReal{z}{i^\ast\eta}$.
Thus from \eqref{martH} and \eqref{martes2} we deduce
\begin{equation}
\label{martH2}
\hat{\mathbb{E}} \left[\left( LN(t)-LN(s),\psi\right)_H h(v|_{[0,s]}) \right]=0.
\end{equation}
and
\begin{align}
	\label{martes2a}
	\hat{\mathbb{E}} \Bigg[ \Bigg(\skpHReal{L N(t)}{\eta}&\skpHReal{L N(t)}{\zeta}-\skpHReal{L N(s)}{\eta}\skpHReal{L N(s)}{\zeta}\\ \notag
	&-\sumM \int_s^t  \skpHReal{L  B_m v(r)}{\eta} \skpHReal{L B_m v(r)}{\zeta} \df r
	\\\notag
	&-\sumM \int_s^t  \skpHReal{L  G( v(r)e_m)}{\eta} \skpHReal{L G( v(r)e_m)}{\zeta} \df r
	\Bigg) h(v|_{[0,s]})\Bigg]=0.
	\end{align}
	Hence, from \eqref{martH2} and \eqref{martes2a}, we infer that $L N$ is a continuous, square integrable martingale in ${H}$ with respect to  $\hat{\F}_{t}:=\sigma \left(v(s): s\le t\right)$ and quadratic variation given by \eqref{qvN}.		
			
Therefore, with the usual Martingale Representation Theorem, see  \cite[Theorem 8.2]{DPZ1},
we can conclude that there exist two  cylindrical Wiener processes $\tilde{W}$ and $\tilde{\newW}$ on $Y$
defined on a probability space $
	\left(\tilde\Omega,\tilde\F,\tilde\Prob\right)=\left(\hat{\Omega} \times \hat{\hat{\Omega}}, \hat{\F}\otimes \hat{\tilde{\F}},  \hat{\Prob}\otimes\hat{\hat{\Prob}}\right)$
	with
	\begin{align}
	\label{LN}
	L N(t)
	=\int_0^t \im L B\left( v(s)\right) \df \tilde{W}(s)+\int_0^t \im L G\left( v(s)\right) \df \tilde{\newW}(s)
	\end{align}
	for $t\in [0,\infty).$
	Thanks to 	\eqref{confronto-norme}, the estimates
	\begin{multline*}
	\norm{B v}_{L^2([0,T]\times \Omega,\gamma(Y_1,\EAdual))}^2
	=\E \int_0^T \sumM \norm{B_m v(s)}_\EAdual^2 \df s
	\lesssim\E \int_0^T \sumM \norm{B_m v(s)}_\EA^2 \df s\\
	\le \E \int_0^T \left(\sumM \norm{B_m}_{{\mathscr{L}(\EA)}}^2\right) \norm{v(s)}_\EA^2 \df s
	\lesssim \E \int_0^T  \norm{v(s)}_\EA^2 \df s
	\lesssim \norm{v}_{L^2(\Omega,L^\infty(0,T;\EA))}^2 <+\infty	
	\end{multline*}
	and
	\begin{multline*}
	\norm{G (v)}_{L^2([0,T]\times \Omega,\gamma(Y_2,\EAdual))}^2
	=\E \int_0^T \sumM \norm{G (v(s))e_m}_\EAdual^2 \df s
	\lesssim\E \int_0^T \sumM \norm{G (v(s))e_m}_\EA^2 \df s
	\\
	= \mathbb{E} \int_0^T \|G(v(s))\|^2_{\gamma(Y_2, \EA)}\, {\rm d}s
	\lesssim 1+ \norm{v}_{L^2(\Omega,L^\infty(0,T;\EA))}^2<\infty ,
	\end{multline*}
yield that $LN$ in \eqref{LN} is a continuous martingale in $H$ and using the continuity of the operator $L$, we get
	\begin{align*}
	\int_0^t \im L B\left( v(s)\right) \df \tilde{W}(s)  +\int_0^t \im L B\left( v(s)\right) \df \tilde{\newW}(s)
	=
	L \left(\int_0^t \im  B\left( v(s)\right) \df \tilde{W}(s) +\int_0^t \im G\left(v(s)\right) \df \tilde{\newW}(s) \right)
	\end{align*}
	for all $t\in [0,T].$
	The definition of $N$ and the injectivity of $L$ yield the equality
	\begin{align}\label{vIsSolution}		
	\int_0^t \im B  v(s) \df \tilde{W}(s) +\int_0^t \im G (v(s)) \df \tilde{\newW}(s)=-v(t)+ u_0+ \int_0^t \left[-\im A v(s)-\im  F(v(s))+b(v(s))\right] \df s
	\end{align}
		in $\EAdual$ for $t\in [0,\infty).$

The estimates for properties \eqref{eqn-stimaEnergy-full} and $\eqref{eqn-propertySolution}$ and the weak continuity of the paths of $v$ in $\EA$ have already been shown at the beginning of the proof. Moreover, in view of (the beginning of) Remark \ref{rem-def of solution}, we have that $v$ is a $H$-valued continuous process. Hence, the system
$\left(\tilde{\Omega},\tilde{\F},\tilde{\Prob},\tilde{W},\tilde{\Filtration},v\right)$ is a martingale solution of equation
$\eqref{eqn-ProblemStratonovich}$ with the initial data  $\mu$, that satisfies  \eqref{eqn-stimaEnergy-full} and $\eqref{eqn-propertySolution}$.

It remains to prove Lemma \ref{convquadvari}. This is done in Appendix \ref{App_C}.

\subsection{Proof of the second part of Theorem \ref{thm-existence}}
\label{Section_inequalities}

Inequality \eqref{Ebound} is a consequence of the same inequality for the Galerkin approximation
\eqref{Galerkin-Ebound}, which is inherited by the limit.

\begin{remark}
\label{rem-FR1}
Theorem \ref{thm-existence} holds in a more general setting. It is sufficient for $A$ to satisfy
\cite[Assumption 2.1]{Brz+H+W-2019} and assume $(X, \Sigma, \mu_X)$ to be a $\sigma$-finite measure space with metric $\rho$
satisfying the doubling property and $D$ to be an open bounded subset of $X$ with $\mu_X(D)<\infty$.
Moreover, it is sufficient for the nonlinear term $F$ to satisfy \cite[Assumptions 2.4 and 2.6(i)]{Brz+H+W-2019}. This kind of
assumptions ensure the compactness of the embedding $V \subset H$ which is the crucial ingredient to prove the existence of a solution
by means of a tightness argument, see Section \ref{sec-exis}. In this general framework one can work in any space
dimension provided a suitable condition on $\alpha$ is taken into account, see \cite[Assumption 2.1(iv)]{Brz+H+W-2019}. Moreover, this framework  allows $A$ to be also a fractional power of Laplacian type operators considered so far; for more details see \cite[Section 3.4]{Brz+H+W-2019}.
\newline
One can easily check that the computations that lead to the existence of a martingale solution, as stated in  Theorem \ref{thm-existence}, hold true in this more general setting.
\end{remark}

\begin{remark}
\label{FR2}
 In light of Remark \ref{rem-FR1}, when one works under Assumptions \ref{assumption-A-space}(ii) or (iii), the
 regularity assumptions on the domain $\mathscr{O}$ are as follows.
To ensure the existence of a martingale solution, see  Theorem \ref{thm-existence}, it is sufficient for the domain $\mathscr{O}$
to be a bounded
open subset of $\mathbb{R}^2$  in the case of Assumption \ref{assumption-A-space}(ii) and a bounded open subset  of
$\mathbb{R}^2$ with Lipschitz boundary in the case of Assumption \ref{assumption-A-space}(iii); the same
 holds in dimension $d$ for suitable $\alpha$ depending on $d$.
In fact, to prove the existence of a martingale solution we just exploit the fact that the embedding $V \subset H$ is
continuous and compact. With the above mentioned regularity assumption on the domain, the continuity of the embeddings in the cases (ii)-(iii) follows by Leoni \cite[Theorem 11.23 and
Exercise 11.26]{Leoni}: roughly speaking, the regularity we require on the domain ensures that $\mathscr{O}$ is an
extension domain, see also Leoni \cite[Exercises 12.11 and 12.14]{Leoni}. The compactness of the embedding is instead ensured by the boundedness of $\mathscr{O}$.
\\
We emphasize that to prove  the pathwise uniqueness of solutions and  the existence of  invariant measures, an additional regularity on the domain  is  required. For more details see Remark \ref{FR2bis}.
\end{remark}

\section{Pathwise uniqueness}\label{sec-UniquenessSection}

In this Section we study the pathwise uniqueness for solutions to \eqref{eqn-ProblemStratonovich}.
We work under  Assumptions \ref{assumption-A-space}, \ref{assumption-F_def} and \ref{assumption-stochastic}.
Here it is crucial that the spatial dimension is 2,
whereas the result of existence of martingale solutions can be obtained in a more general setting.
Indeed,  by means of the Stichartz estimates we prove that any  martingale solution fulfilling
\eqref{eqn-propertySolution} enjoyes more regularity.
The deterministic and stochastic Strichartz estimates will be presented in
 Appendix \ref{Str_est_sec}; they are
based on the results of Blair, Smith and Sogge, see \cite{Blair+Sogge_2008}.
  These estimates allow us to work at the same time with the Laplace-Beltrami operator
  on a two-dimensional compact Riemannian manifold $M$ and the realization of the negative Laplace operator with Dirichlet or Neumann boundary
conditions on a smooth relatively compact  domain $\mathscr{O} \subset \mathbb{R}^2$.
This is a first difference with respect to \cite{Brz+H+W-2019} where the authors start from the Strichartz estimates due to Bernicot and Samoyeau, see \cite{Bernicot} and  \cite[Lemmas B.3 and B.4]{Brz+H+W-2019}.
Moreover, differently to \cite{Brz+H+W-2019}, in order to prove the  pathwise uniqueness we cannot work pathwise since our noise is not conservative. We address this issue by appealing to a classical argument contained in \cite{Sch1997}.

As usual, when we write $L^q$, $H^{s,q}$ without specifying the domain, we mean either $L^q(M)$, $H^{s,q}(M)$ or $L^q(\mathscr{O})$, $H^{s,q}(\mathscr{O})$.

\begin{lemma}\label{lem-F-continity}
Let $\vartheta \in (0,1)$. Then $F$ maps the space $V$ into $\D(A^{\frac{\vartheta }{2}})$ and
\begin{align}\label{eqn-F-growth}
  \|F(u) \|_{\D(A^{\frac{\vartheta }{2}})} &\lesssim  \|u \|_{V}^\alpha, \qquad u \in V.
  \end{align}
\end{lemma}
\begin{proof}
For the case of $(M,g)$ a compact manifold without boundary equipped with a Lipschitz metric g and $-A$ equal to the Laplace-Beltrami operator we refer to \cite[Lemma 7.1]{Brz+H+W-2019}.
Hence we have to  prove \eqref{eqn-F-growth} when $-A$ is the Laplace operator with either Dirichlet or Neumann
boundary conditions on a smooth relatively compact  subset $\mathscr{O}$ of $\mathbb{R}^2$.

Let us fix $\vartheta \in (0,1)$ and choose ${\news} \in (1,2)$ such that
\begin{equation}
\label{beta_s}
\vartheta  < \frac{2({\news}-1)}{{\news}}.
\end{equation}
 We start by proving that
\begin{equation}
\label{ineq_1s}
\|F(u)\|_{W^{1,{\news}}(\mathscr{O})} \lesssim \|u\|_{H^1(\mathscr{O})}^{\alpha},\;\; u\in H^1(\mathscr{O}).
\end{equation}
In order to prove \eqref{ineq_1s} we compute the weak derivative of $F(u)$:
\begin{equation*}
\nabla F(u)= \left( \frac{\alpha -1}{2}\right)|u|^{\alpha -3}\left(\bar u\nabla u+u \nabla \bar u\right)u +|u|^{\alpha-1}\nabla u, \qquad  \text{for an arbitrary} \ u \in H^1(\mathscr{O}).
\end{equation*}
The H\"older inequality and the Sobolev embedding $H^1(\mathscr{O}) \subset L^q(\mathscr{O})$ with $q=\frac{2{\news}(\alpha-1)}{2-{\news}}$, see Proposition \ref{B1}(i), yield
\begin{equation*}
\|\nabla F(u)\|_{L^{\news}(\mathscr{O})}
\lesssim \| |u|^{\alpha-1}\|_{L^{\frac{2{\news}}{2-{\news}}}(\mathscr{O})}\|\nabla u\|_{L^2(\mathscr{O})}
\lesssim \|u\|^{\alpha-1}_{L^q(\mathscr{O})}\|\nabla u\|_{L^2(\mathscr{O})} \lesssim \|u\|^{\alpha}_{H^1(\mathscr{O})}.
\end{equation*}
Similarly, the Sobolev embedding $H^1(\mathscr{O})\subset L^{{\news}\alpha}(\mathscr{O})$ yields
\begin{equation*}
\|F(u)\|_{L^{\news}(\mathscr{O})} \simeq \|u\|_{L^{{\news}\alpha}(\mathscr{O})}^{\alpha} \lesssim \|u\|^{\alpha}_{H^1(\mathscr{O})},\;\; u\in H^1(\mathscr{O})
\end{equation*}
and thus \eqref{ineq_1s} immediately follows. By the choice of ${\news}$ in \eqref{beta_s}, Proposition \ref{B1}(iii) ensures that Sobolev embedding $H^{1,{\news}}(\mathscr{O})\subset H^{\vartheta }(\mathscr{O})$ holds and thus
\begin{equation}
\label{general_est}
\|F(u)\|_{H^{\vartheta }(\mathscr{O})} \lesssim \|u\|^{\alpha}_{H^1(\mathscr{O})}.
\end{equation}

Observe now that, when on $\mathscr{O}$ we consider the Neumann boundary conditions, thanks to \eqref{realization_sN}, $H^{\vartheta }=\D(A_N^{\frac{\vartheta }{2}})$ and thus from \eqref{general_est}, \eqref{eqn-F-growth} immediately follows.

In the case of the Dirichlet boundary conditions, from \eqref{realization_sD} and \eqref{general_est}, \eqref{eqn-F-growth}
immediately follows when $\vartheta  \in \left(0, \frac 12\right)$. When $\vartheta  \in \left(\frac 12,1\right)$, \eqref{eqn-F-growth}
is obtained by a density argument: for $\vartheta  \in \left(\frac 12,1\right)$, by \eqref{realization_sD},
$\D(A_D^{\frac{\vartheta }{2}})=H_0^{\vartheta }$, where $H_0^{\vartheta }$ is the closure of $C^{\infty}_0$ w.r.t. the norm
$\|\cdot\|_{H^{\vartheta ,2}}$. One can easily verify that $F$ maps $C^{\infty}_0$ in itself and thus, by \eqref{general_est} we
deduce \eqref{eqn-F-growth} when $\vartheta  \in \left(\frac 12,1\right)$.
\end{proof}

We now reformulate problem \eqref{eqn-ProblemStratonovich} in the mild form to show additional regularity properties of solutions to \eqref{eqn-ProblemStratonovich} that satisfies \eqref{eqn-propertySolution}.

\begin{proposition}\label{prop-mild solution-new}
Fix  $r \in [1, \infty)$
and let $\mu$ be a Borel probability measure on $V$  whose $r(\alpha+1)$-th moment is finite.\\ Let
$\left(\tilde{\Omega},\tilde{\F},\tilde{\Prob},\tilde{W},\tilde{\newW},\tilde{\Filtration},u\right)$ be a martingale solution to
\eqref{eqn-ProblemStratonovich}  with $\law{\tilde{\mathbb{P}}}{u(0)}=\mu$ on $\mathscr{B}(V)$
which satisfies the condition  \eqref{eqn-propertySolution}.
If $2<p,q<\infty$  satisfy  the  following admissibility condition
\begin{equation}\label{eqn-admissibility condition}
\frac 2p+\frac 2q=1
\end{equation}
 and  $\vartheta  \in \left(\frac{4}{3p},1\right)$, then the following hold.
\begin{itemize}
\item For any $T>0$, \begin{equation}
\label{more_regularity}
u \in 
L^{2r/\alpha}(\tilde{ \Omega};Y^\vartheta _T),
\end{equation}
where  $Y^\vartheta_T$ is a Banach space defined as
\begin{align}\label{eqn-Y^theta_T}
Y^\vartheta _T=L^p(0,T;\D(A_q^{\frac{\vartheta }2-\frac{2}{3p}})) \cap C([0,T];\D(A^{\frac{\vartheta }{2}})),
\end{align}
endowed with a norm
\begin{equation*}
\|\cdot\|_{Y_T^{\vartheta }}=\|\cdot\|_{L^p(0,T;\D(A_q^{\frac{\vartheta }2-\frac{2}{3p}}))}+ \|\cdot\|_{L^\infty(0,T;\D(A^{\frac{\vartheta }{2}}))}.
\end{equation*}
In particular,
\begin{equation}
\label{double_star}
u \in C\left([0, \infty);\D(A^{\frac{\vartheta }2})\right), \qquad \tilde{\mathbb{P}}-a.s.
\end{equation}
\item
For every $t \in [0,\infty)$ the equality
 \begin{align}
\label{mild_form}
\im u(t)&= \im e^{-\im tA}u_0+ \int_0^t e^{-\im(t-\tau)A}F(u(\tau))\, {\rm d}\tau
\notag\\
&+\im \int_0^t e^{-\im(t-\tau)A}b(u(\tau))\, {\rm d}\tau - \im \beta   \int_0^t e^{-\im(t-\tau)A}u(\tau)\, {\rm d}\tau
\notag\\
&+\int_0^t e^{-\im(t-\tau)A}B(u(\tau))\, {\rm d}\tilde{W}(\tau)+\int_0^t e^{-\im(t-\tau)A}G(u(\tau))\, {\rm d}\tilde{\newW}(\tau)
\end{align}
is satisfied $\tilde{\Prob}$-a.s.
in $\D(A^{\frac{\vartheta }{2}})$.
\end{itemize}
\end{proposition}


\begin{proof}
   Let $\left(\tilde{\Omega},\tilde{\F},\tilde{\Prob},\tilde{W},\tilde{\newW},\tilde{\Filtration},u\right)$ be
   a martingale solution  to \eqref{eqn-ProblemStratonovich}  given  in Theorem \ref{thm-existence}
  such that it has the regularity
   \eqref{eqn-propertySolution}.
Let us at first show  the equality \eqref{mild_form} makes sense in
$\D(A^{-\frac 32})$.  Let us notice that for  $\sigma=-\frac32$,
 the group $(e^{-\im tA})_{t\ge 0}$ on $L^2$ extends to a $C_0$-group
 $(T_{\sigma}(t))_{t\ge 0}$ on $\D(A^{-\sigma})$ with the generator
 $-\im A_\sigma$,  where  $\D(A_\sigma)=\D(A^{-\sigma+1})$, i.e.  $-\im A_\sigma$ is a suitable extension of  $-\im A$.
 To keep the notation simple we will denote this semigroup  by $(e^{-\im tA})_{t \ge 0}$.\\
Let us choose and fix $t \in (0,\infty)$.
We apply the It\^o formula, see \cite[Theorem 2.4]{BvNVW},  to the process $\im f(\tau, u(t-\tau))$, $\tau \in [0,t]$,  where $f$ is the function defined as
\begin{equation*}
f : [0,t]\times \D(A^{-\frac 12}) \ni  (\tau,x) \mapsto e^{-\im (t-\tau)A}x \in \D(A^{-\frac 32}),
\end{equation*}
where we recall that  $\D(A^{-\frac 12})=\EA^\ast$.
Obviously $f$ is of  $C^{1,2}$-class and since it  follows from the assumptions that  we can apply
Theorem 2.4  of \cite{BvNVW},    we deduce that $\tilde{\mathbb{P}}$-a.s.,
\begin{align*}
\im u(t)
&= \im e^{-\im tA}u(0)+ \int_0^t e^{-\im(t-\tau)A}F(u(\tau))\, {\rm d}\tau
\\
&+\im \int_0^t e^{-\im(t-\tau)A}b(u(\tau))\, {\rm d}\tau - \im \beta  \int_0^t e^{-\im(t-\tau)A}u(\tau)\, {\rm d}\tau
\\
&+\int_0^t e^{-\im(t-\tau)A}B(u(\tau))\, {\rm d}\tilde{W}(\tau)+\int_0^t e^{-\im(t-\tau)A}G(u(\tau))\, {\rm d}\tilde{\newW}(\tau) \;\mbox{ in $\D(A^{-\frac 32})$}.
\end{align*}
We now use the Strichartz estimates from Lemma \ref{Strichartz_lemma} to improve the regularity of the solution.
Let us consider two Strichartz pairs: $(\infty, 2)$ and $(p,q)$ with $p,q \in (2,\infty)$ such that $\frac 2p+\frac 2q=1$. Moreover, we choose and fix a  $\vartheta  \in \left(\frac{4}{3p},1\right)$.

Let $T>0$ and let $Y^\vartheta _T$ be the Banach  space defined by \eqref{eqn-Y^theta_T} for this choice of Strichartz pairs. Notice that  by assumption the exponent $\frac{\vartheta }2-\frac{2}{3p}$ is positive.
By estimating the terms in the RHS of the above expression we will prove \eqref{more_regularity}, \eqref{double_star} and that identity
\eqref{mild_form} holds a.s. in $\D(A^{\frac{\vartheta }{2}})$.
\\
Thanks to the homogeneous Strichartz estimate \eqref{eqn-Strichartz-211} we get
\begin{align}
\label{strong_cond}
\|e^{-\im tA}u(0)\|_{L^{2r/\alpha}(\tilde{\Omega},Y^{\vartheta }_T)}&\lesssim_T \|u(0)\|_{L^{2r/\alpha}(\tilde{\Omega},(\D(A^{\frac{\vartheta }{2}}))} \lesssim \|u(0)\|_{L^{2r/\alpha}(\tilde{\Omega},\EA)}
\notag\\
&\lesssim \|u(0)\|_{L^{2r}(\tilde{\Omega},\EA)}\lesssim\|u(0)\|_{L^{r(\alpha+1)}(\tilde{\Omega},\EA)} < \infty,
\end{align}
where in the second to last inequality we exploited the embedding $V \subset D(A^{\frac{\vartheta }{2}})$ and in the last two inequalities we used the embeddings $L^{r(\alpha+1)}\subset L^{2r}\subset L^{2r/\alpha}$ since $2r>2r/\alpha$ and $r(\alpha+1)>2r$ being $\alpha>1$ by Assumption \ref{assumption-F_def}. Moreover, $\|u(0)\|_{L^{r(\alpha+1)}(\tilde{\Omega},\EA)}  < \infty$ since $\law{\tilde{\mathbb{P}}}{u(0)}=\mu$ and $\mu$ has finite $r(\alpha+1)$-th moment by assumptions.\\
The inhomogeneous Strichartz estimate \eqref{eqn-Strichartz-212}, Lemma \ref{lem-F-continity} and \eqref{eqn-propertySolution} yield
\begin{align*}
\left \Vert\int_0^{\cdot} e^{-\im(\cdot-\tau)A} F(u(\tau))\ {\rm d}\tau\right\Vert_{L^{2r/\alpha}(\tilde{\Omega};Y^{\vartheta }_T)}
&\lesssim_T
\|F(u)\|_{L^{2r/\alpha}(\tilde{\Omega},L^1(0,T;\D(A^{\frac{\vartheta }{2}})))}
\\
&\lesssim \|u\|^{\alpha}_{L^{2r}(\tilde{\Omega},L^{\alpha}(0,T;V))}
\lesssim \|u\|^{\alpha}_{L^{2r}(\tilde{\Omega},L^{\infty}(0,T;V))}
< \infty.
\end{align*}
Similarly, since we assume  $\vartheta <1$, from the inhomogeneous Strichartz estimate \eqref{eqn-Strichartz-212} and \eqref{eqn-propertySolution} 
we infer that
\begin{align*}
\left \Vert   \int_0^{\cdot} e^{-\im(\cdot-\tau)A} u(\tau)\ {\rm d}\tau\right\Vert_{L^{2r/\alpha}(\tilde{\Omega};Y^{\vartheta }_T)}
&\lesssim
 \|u\|_{L^{2r/\alpha}(\tilde{\Omega},L^1(0,T;\D(A^{\frac{\vartheta }{2}})))}
\lesssim \|u\|_{L^{2r}(\tilde{\Omega},L^{\infty}(0,T;V))}< \infty.
\end{align*}
The It\^o correction term can be estimated as follows. Recalling \eqref{Strat_cor}, by the inhomogeneous Strichartz estimate \eqref{eqn-Strichartz-212}, and \eqref{noiseBoundsEnergy} and \eqref{eqn-propertySolution} we infer that
	\begin{align*}
	\left \Vert\int_0^{\cdot} e^{-\im (\cdot-\tau)A}b(u(\tau))\df \tau\right\Vert_{L^{2r/\alpha}(\tilde{\Omega},Y^{\vartheta }_T)}
	&\lesssim \norm{b(u)}_{L^{2r/\alpha}(\tilde{\Omega}, L^1(0,T; \D(A^{\frac{\vartheta }{2}})))}
	\\
&\lesssim \norm{b(u)}_{L^{2r}(\tilde{\Omega}, L^\infty(0,T; V))}
	\lesssim \norm{u}_{L^{2r}(\tilde{\Omega}, L^\infty(0,T; V))}<\infty.
	\end{align*}
For what concerns the stochastic convolution term involving the operator $B$, by means of the stochastic Strichartz estimate \eqref{eqn-Strichartz-213}, and   Assumptions
\eqref{noiseBoundsEnergy}  and \eqref{eqn-propertySolution}, we obtain
	\begin{align*}
	\left\Vert \int_0^{\cdot} e^{-\im (\cdot-\tau)A}B(u(\tau))\df \tilde{W}(\tau)\right\Vert_{L^{2r/\alpha}(\tilde{\Omega},Y_T^{\vartheta })}
	&	
	\lesssim \norm{B(u)}_{L^{2r/\alpha}(\tilde{\Omega}, L^2(0,T; \gamma(Y_1,\D(A^{\frac{\vartheta }{2}}))))}
		\\
\lesssim \norm{B(u)}_{L^{2r/\alpha}(\tilde{\Omega}, L^2(0,T; \gamma(Y_1,V)))}
	&\lesssim \norm{u}_{L^{2r/\alpha}(\tilde{\Omega}, L^2(0,T; V))}
	\lesssim \norm{u}_{L^{2r}(\tilde{\Omega}, L^{\infty}(0,T;V))}<\infty
	\end{align*}
when $\frac{2r}\alpha\ge 1$. For smaller values we  first estimate with the moment of order 1 and then conclude as above. The same will be done for the next stochastic integral.
The stochastic Strichartz estimate \eqref{eqn-Strichartz-213}, and Assumptions \eqref{crescitaGEA} and \eqref{eqn-propertySolution} yield
\begin{align*}
\left\Vert \int_0^{\cdot} e^{-\im (\cdot-\tau)A}G(u(\tau))\df \tilde{\newW}(\tau)\right\Vert_{L^{2r/\alpha}(\tilde{\Omega},Y_T^{\vartheta })}
&\lesssim \norm{G(u)}_{L^{2r/\alpha}(\tilde{\Omega}, L^2(0,T; \gamma(Y_2,\D(A^{\frac{\vartheta }{2}}))))}
\\
&\hspace{-3truecm} \lesssim \norm{G(u)}_{L^{2r/\alpha}(\tilde{\Omega}, L^2(0,T; \gamma(Y_2,V)))}
= \mathbb{E} \left[ \left(\int_0^T \|G(u(\tau))\|^2_{\gamma(Y_2,V)}\,{\rm d}\tau \right)^{r/\alpha} \right]
\\
&\hspace{-2truecm}\lesssim 1 + \|u\|_{L^{2r/\alpha}(\tilde{\Omega}, L^2(0,T;V))}
\lesssim 1 + \|u\|_{L^{2r}(\tilde{\Omega}, L^{\infty}(0,T;V))} < \infty.
\end{align*}
Thus
the mild equation \eqref{mild_form} holds $\tilde{\mathbb{P}}$-a.s. in $\D(A^{\frac{\vartheta }{2}})$ for every $t \in [0,\infty)$.
 Therefore,
 thanks to the pathwise continuity
 of the deterministic and stochastic integrals, we get \eqref{more_regularity} and \eqref{double_star}.
\end{proof}

\begin{remark}
Let us note the following difference between our result and Proposition 7.2 in \cite{Brz+H+W-2019}. Here in \eqref{eqn-propertySolution} we assume the $\EA$-regularity of the solution while in the paper \cite{Brz+H+W-2019} only $H^s$-regularity  was assumed, see assumption (7.6) therein.   We have to make a stronger assumption here because the Strichartz estimates for  a  boundaryless manifold  are stronger
than the Strichartz estimates for a bounded domain with smooth boundary, see Remark \ref{rem-Strichartz comparison}. However, this stronger assumption  is fully sufficient for our purposes.
\end{remark}

We are now ready to prove the pathwise uniqueness of the martingale solutions to \eqref{eqn-ProblemStratonovich} satisfying condition   \eqref{eqn-propertySolution}. We will need the following result which exploits the gain of regularity of solutions proved in Proposition \ref{prop-mild solution-new}.

\begin{lemma}
\label{lem-psi}
Assume $r \in [1, \infty)$ and let $\mu$ be a Borel probability measure on $V$ whose  $r(\alpha+1)$-th moment is finite.\\ Let
$\left(\tilde{\Omega},\tilde{\F},\tilde{\Prob},\tilde{W},\tilde{\newW},\tilde{\Filtration},u\right)$ be a martingale solution to
\eqref{eqn-ProblemStratonovich} with $\law{\tilde{\mathbb{P}}}{u(0)}=\mu$ on $V$
such that \eqref{eqn-propertySolution} holds.
Then the trajectories of the  process $h$ defined by
\begin{equation*}
h(s):= \|u(s)\|_{L^{\infty}}^{\alpha-1}, \qquad s \in [0,\infty),
\end{equation*}
belong to $L_{\mathrm{loc}}^1([0,\infty))$, $\tilde{\Prob}$-a.s.
\end{lemma}

\begin{proof}[Proof of Lemma \ref{lem-psi}] \textbf{Step 1.} Let us assume that $\alpha \in (1,3]$.  We choose $p,q\in (2,\infty)$
satisfying the admissibility condition \eqref{eqn-admissibility condition}. Note that in this case $p> \alpha-1$.
\\
Since $1-\frac{2}{3p}>\frac{4}{3p}$ we can choose  a number   $\vartheta  \in \left(1-\frac{2}{3p},1\right) \cap \left(\frac{4}{3p},1\right)$.
Thanks to Proposition \ref{prop-mild solution-new}, for any $T>0$,
$u \in L^p(0,T;\D(A_q^{\frac{\vartheta }{2}-\frac{2}{3p}}))$ $\tilde{\mathbb{P}}$-a.s.. Moreover, by  Proposition \ref{B1}(ii), Definition \ref{def-sobolev spaces with boundary conditions} and Theorem \ref{dom_D_N},
which ensures that $\D(A_{B_q}^{\frac{\vartheta }{2}-\frac{2}{3p}}) \subset H^{\vartheta -\frac{4}{3p},q}(\mathscr{O})$,
for $B=D,N$,  and Proposition \ref{propB6}(i)-(ii), we infer  that $\D(A_q^{\frac{\vartheta }{2}-\frac{2}{3p}})\embed L^{\infty}$.
In fact, this embedding holds true
because $\vartheta -\frac{4}{3p}-\frac2q=\vartheta -1 +\frac{2}{3p}$ so that  $\vartheta -\frac{4}{3p}-\frac2q>0\iff \vartheta  >1- \frac{2}{3p}$.
Since $p> \alpha-1$, by applying the H\"older inequality in time  we see that the process $h$ satisfies,  $\tilde{\mathbb{P}}$-a.s.
\begin{equation}
\label{eqn-esHold-1}
\|h\|_{L^1(0,T)} \lesssim  \|u\|^{\alpha-1}_{L^{\alpha-1}(0,T;\D(A_q^{\frac{\vartheta }{2}-\frac{2}{3p}}))}\lesssim  \|u\|^{\alpha-1}_{L^p(0,T;\D(A_q^{\frac{\vartheta }{2}-\frac{2}{3p}}))} .
\end{equation}
 \textbf{Step 2.} Let us assume that   $\alpha >3$.  Then we set $p:=\alpha-1$. Since $p>2$ we can find $q>2$ such that the admissibility condition
\eqref{eqn-admissibility condition} holds. As in  Step 1 we  choose a number   $\vartheta  \in \left(1-\frac{2}{3p},1\right) \cap \left(\frac{4}{3p},1\right)$ and observe that
 $\vartheta -\frac{4}{3p}-\frac2q>0$. Therefore, we have  $\D(A_q^{\frac{\vartheta }{2}-\frac{2}{3p}})\embed L^{\infty}$.
Hence,  we get  the following version of  estimate \eqref{eqn-esHold-1}
\begin{equation}
\label{eqn-esHold-2}
\|h\|_{L^1(0,T)} \lesssim   \|u\|^{\alpha-1}_{L^{\alpha-1}(0,T;\D(A_{q}^{\frac{\vartheta }{2}-\frac{2}{3p}}))}=\|u\|^{p}_{L^{p}(0,T;\D(A_{q}^{\frac{\vartheta }{2}-\frac{2}{3p}}))}.
\end{equation}
\textbf{Step 3}
We  conclude that, for any $T>0$, $h\in L^1(0,T)$ $ \tilde{\mathbb{P}}$-a.s. as a consequence of inequalities \eqref{eqn-esHold-1} and \eqref{eqn-esHold-2} combined with the moment estimate \eqref{more_regularity}.
\end{proof}

Now we are ready to prove the pathwise uniqueness. This holds for any martingale solution $u$,
as in Definition \ref{def-martingale solution} and enjoying \eqref{eqn-propertySolution}, since we have proved that, under suitable conditions on the initial distribution, for these solutions there is  the additional regularity
$u\in L_{\mathrm{loc}}^{\alpha-1}([0,\infty);L^\infty)\quad \tilde \Prob-a.s.$
 and  $u\in L^4(\tilde \Omega; L^4(0,T;H))$, for every $T>0$,
at least.

\begin{theorem}\label{thm-pathwise uniqueness}
 Let Assumptions \ref{assumption-A-space},  \ref{assumption-F_def}, \ref{assumption-stochastic} be in force. Assume $r \in [2, \infty)$.
Let $\mu$ be a Borel probability measure on $V$ whose $r(\alpha+1)$-th moment is finite.
 \\
 Let
$\left(\tilde{\Omega},\tilde{\F},\tilde{\Prob},\tilde{W},\tilde{\newW},\tilde{\Filtration},u_i\right)$, $i=1,2$ be two martingale solutions to \eqref{eqn-ProblemStratonovich}
with random initial data of law $\mu$ and both satisfying condition \eqref{eqn-propertySolution}.
Then
\begin{trivlist}
\item[(i)] these solutions to equation \eqref{eqn-ProblemStratonovich} are pathwise unique, i.e.
\begin{equation}\label{eqn-equivalence}
\tilde{\mathbb{P}}\left(u_1(t)=u_2(t) \ \text{for all} \ t \in [0,\infty)\right)=1.
\end{equation}
\item[(ii)] martingale  solutions to equation \eqref{eqn-ProblemStratonovich} are unique in law, i.e.  if
$\left(\widetilde{\Omega_{i}},\widetilde{\F_{i}},\widetilde{\Prob_{i}},\widetilde{W_{i}},\widetilde{\newW_{i}},\widetilde{\Filtration_{i}},u_i\right)$, $i=1,2$ be two martingale solutions to \eqref{eqn-ProblemStratonovich}
with random initial data of law $\mu$ and both satisfying condition \eqref{eqn-propertySolution}, then
\begin{equation}\label{eqn-unique in law}
\law{\widetilde{\Prob}_{1}}{ u_1}= \law{\widetilde{\Prob}_{2}}{u_2} \mbox{ on } Z_\infty.
\end{equation}
\end{trivlist}

\end{theorem}
\begin{proof}
Let us first deal with assertion  (ii). In view of assertion (i), it is a consequence   of \cite[ Theorem  2]{Ondrejat_2004_Uniqueness}, the second assertion; see also Theorem 12.1 therein.

Let us next deal with assertion  (i).
Since the noise is not conservative we can not work pathwise as in \cite{Brz+H+W-2019}. We prove the uniqueness of the solution by means of a rather classical argument, see \cite{Sch1997}.
Take two solutions $\left(\tilde{\Omega},\tilde{\F},\tilde{\Prob},\tilde{W},\tilde{\newW},\tilde{\Filtration},u_i\right)$, $i=1,2$, with the same initial data $\mu$ on $V$ which  satisfy
  \begin{align}
&u_i \in L^{2r}(\tilde{\Omega},L^{\infty}(0,T;V))\; \text{ for } \ i=1,2 \mbox{ and }T>0.
\label{reg_Sch}
\end{align}

Define $v:= u_1-u_2$. This difference satisfies
\begin{equation*}
\begin{cases}
{\rm d}v(t)= -[\im Av(t)+\im\left(F(u_1(t))-F(u_2(t))\right)+\beta v(t)]{\rm d}t +
\\
\qquad \quad \ - \im B(v(t)) \circ {\rm d} \tilde{W}(t)-\im[G(u_1(t))-G(u_2(t))] {\rm d}\tilde{\newW}(t)
\\
v(0)=0.
\end{cases}
\end{equation*}
We use the It\^o formula to compute ${\rm d}\left( e^{-\int_0^t\psi(s)\, {\rm d}s}\|v(t)\|^2_H\right)$, by choosing a process  $\psi$ as
\begin{equation}
\label{u0}
\psi(t):=2\left[\|u_1(t)\|_{L^{\infty}}^{\alpha-1} +\|u_2(t)\|^{\alpha-1}_{L^{\infty}}-\beta\right] +L_G , \qquad t \in [0,\infty),
\end{equation}
with $L_G$ the Lipschitz constant given in \eqref{Lipschitz_G}. Thanks to
Lemma \ref{lem-psi} we have that $\psi \in L^1_{\text{loc}}(0,\infty)$, $\tilde{\mathbb{P}}$-a.s.. For $t \in [0,\infty)$, we have
\begin{align}
\label{u1}
{\rm d}\left( e^{-\int_0^t\psi(s)\, {\rm d}s}\|v(t)\|^2_H \right)= -\psi(t)e^{-\int_0^t\psi(s)\,{\rm d}s}\|v(t)\|^2_H\, {\rm d}t + e^{-\int_0^t\psi(s)\, {\rm d}s}\,{\rm d}\|v(t)\|^2_H,
\end{align}
where
\begin{align}
\label{u2}
{\rm d} \|v(t)\|_H^2
&= 2 \text{Re}\langle v(t), [G(u_1(t))-G(u_2(t))]\, {\rm d}\tilde{\newW}(t)\rangle
\\
&\hspace{-1truecm}+\left[ 2\text{Re}\langle v(t), -\im F(u_1(t))+ \im F(u_2(t))\rangle -2 \beta \|v(t)\|^2_H+\|G(u_1(t))-G(u_2(t))\|^2_{\gamma(Y_2,H)} \right] \, {\rm d}t.
\notag
\end{align}
By means of inequality \eqref{Lipschitz_G} we have the following estimate
\begin{equation}
\label{u3}
\|G(u_1(t))-G(u_2(t))\|^2_{\gamma(Y_2,H)} \le L_G \|v(t)\|^2_H, \;\; t \geq 0.
\end{equation}
The inequality
\begin{equation*}
|F(z_1)-F(z_2)| \lesssim \left(|z_1|^{\alpha-1}+|z_2|^{\alpha-1}\right) |z_1-z_2|, \qquad z_1, z_2 \in \mathbb{C},
\end{equation*}
and the H\"older inequality yield
\begin{equation}
\label{u4}
\text{Re}\langle v(t), -\im F(u_1(t))+ \im F(u_2(t))\rangle
\lesssim \|v(t)\|^2_H \left[\|u_1(t)\|_{L^{\infty}}^{\alpha-1} +\|u_2(t)\|^{\alpha-1}_{L^{\infty}}\right], \;\; t\geq 0.
\end{equation}
By means of 
\eqref{u3} and \eqref{u4} we estimate \eqref{u1} as follows
\begin{align*}
{\rm d}\left( e^{-\int_0^t\psi(s)\, {\rm d}s}\|v(t)\|^2_H \right)
&\lesssim
-\psi(t)e^{-\int_0^t\psi(s)\,{\rm d}s}\|v(t)\|^2_H\, {\rm d}t
\\
&+ e^{-\int_0^t\psi(s)\, {\rm d}s} \left[ 2 \|v(t)\|^2_H \left[\|u_1(t)\|_{L^{\infty}}^{\alpha-1} +\|u_2(t)\|^{\alpha-1}_{L^{\infty}}\right]\, {\rm d}t-2 \beta \|v(t)\|^2_H\, {\rm d}t \right.
\\
&\left.\qquad +L_G\|v(t)\|^2_H\, {\rm d}t+ 2 \text{Re}\langle v(t), [G(u_1(t))-G(u_2(t))]\, {\rm d}\tilde{\newW}(t)\rangle \right]
\\
&= e^{-\int_0^t\psi(s)\, {\rm d}s}\|v(t)\|^2_H
\left[-\psi(t) +2 \left[\|u_1(t)\|_{L^{\infty}}^{\alpha-1} +\|u_2(t)\|^{\alpha-1}_{L^{\infty}}-\beta\right]+L_G\right]\, {\rm d}t
\\
& \qquad + 2 e^{-\int_0^t\psi(s)\, {\rm d}s}\text{Re}\langle v(t), [G(u_1(t))-G(u_2(t))]\, {\rm d}\tilde{\newW}(t)\rangle.
\end{align*}
Therefore, recalling \eqref{u0}, we obtain
\begin{equation}
\label{Schmalfuss}
e^{-\int_0^t\psi(s)\, {\rm d}s}\|v(t)\|^2_H
\lesssim
2\int_0^te^{-\int_0^r\psi(s)\, {\rm d}s} \text{Re}\langle v(r), [G(u_1(r))-G(u_2(r))]\, {\rm d}\tilde{\newW}(r)\rangle.
\end{equation}
Let us observe that the right hand side of \eqref{Schmalfuss} is a square integrable martingale. Indeed, using inequality \eqref{Lipschitz_G} we get
\begin{align*}
& \mathbb{E}\left[ \int_0^te^{-2\int_0^r\psi(s)\, {\rm d}s}\|v(r)\|^2_H\|G(u_1(r))-G(u_2(r))\|^2_{\gamma(Y_2,H)}\,{\rm d}r\right]
\\
& \hspace{1truecm} \lesssim e^{4\beta t} L_G^2\mathbb{E}\left[ \int_0^t\|v(r)\|^4_H{\rm d}r\right] \lesssim \|v\|^4_{L^4(\tilde{\Omega};L^4(0,T;H))}
\lesssim \|v\|^4_{L^{2r}(\tilde{\Omega};L^\infty(0,T;V))},
\end{align*}
which is finite thanks to \eqref{reg_Sch} because $r\geq 2$.
Therefore, by taking the expected value in both sides of \eqref{Schmalfuss}
 we get
\begin{equation*}
\mathbb{E} \left[e^{-\int_0^t\psi(s)\, {\rm d}s}\|v(t)\|^2_H \right]\le 0, \qquad \forall \ t \in [0,\infty).
\end{equation*}
Thus, in particular, for any $t \in [0,\infty)$
\begin{equation*}
e^{-\int_0^t\psi(s)\, {\rm d}s}\|v(t)\|^2_H=0, \qquad \tilde{\mathbb{P}}-a.s.
\end{equation*}
Therefore, if we take a sequence $\{t_k\}_{k=1}^{\infty}$, which is dense in $[0,\infty)$, we have
\begin{equation*}
\tilde{\mathbb{P}}\left( \|v(t_k)\|_H=0 \ \text{for all}\ k \in \mathbb{N}\right)=1.
\end{equation*}
Since by Definition \ref{def-martingale solution} both processes  $u_1$ and $u_2$  are  $H$-valued  continuous,
we deduce that
\begin{equation*}
\tilde{\mathbb{P}}\left(u_1(t)=u_2(t) \ \text{for all} \ t \in [0,\infty)\right)=1
\end{equation*}
and this concludes the proof.
\end{proof}

The pathwise uniqueness and the existence of martingale solutions
imply the existence of  strong solutions, see e.g. \cite[Theorem 2]{Ondrejat_2004_Uniqueness} and \cite[Theorem 5.3 and Corollary 5.4]{Kunze_2013_Yamada}.
In Section \ref{sec-Yamada-Watanabe Theorem} we have formulated a suitable modification of the above two results.
The following result dealing with a generic time interval $[t_0,\infty)$ is thus a direct consequence of Theorems \ref{thm-Yamada-Watanabe}, \ref{thm-existence} and \ref{thm-pathwise uniqueness}.

Before  we formulate this result it convenient to introduce additional notation analogous  to \eqref{eqn-Y^theta_T}, i.e.
	\begin{equation}
Y^\vartheta_{[t_0,\infty)}=L^p_{\mathrm{loc}}([t_0,\infty);\D(A_q^{\frac{\vartheta }2-\frac{2}{3p}})) \cap C([t_0,\infty);\D(A^{\frac \vartheta  2 }))
\end{equation}
and, for $T>0$, 	\begin{equation}
Y^\vartheta_{[t_0,T]}=L^p([t_0,T];\D(A_q^{\frac{\vartheta }2-\frac{2}{3p}})) \cap C([t_0,T];\D(A^{\frac \vartheta  2 })).
\end{equation}

\begin{theorem}\label{thm-strong existence}
Let Assumptions \ref{assumption-A-space}, \ref{assumption-F_def} and \ref{assumption-stochastic} hold. Fix  $\newr\in [2,\infty)$.
Assume that   $t_0\in  [0,\infty)$ and  $u_{t_0}$  is an ${\F}_{t_0}$-measurable  Borel $\EA$-valued random variable  with finite $r(\alpha+1)$-th moment. Then the following assertions are satisfied.
\begin{trivlist}
\item[(1)]  There exists a unique strong solution   $u=(u(t): t\in[ t_0, \infty))$
 to equation \eqref{eqn-ProblemStratonovich}
such that
 \begin{equation}\label{eqn-stimaEnergy-full-SS}
		                 \E \left[  \sup_{t \in [t_0,T]} \|u(t)\|_H^{2r}+\sup_{t \in [t_0,T]} \energy(u(t)) ^{\newr}\right] <\infty, \mbox{ for every $T\geq t_0$},
		                  \end{equation}
(and hence)
  \begin{equation}\label{add-regularity-t0-T}
     \E \left[  \sup_{t \in [t_0,T]} \|u(t)\|_\EA^{2r}\right]<\infty,  \mbox{ for every $T\geq t_0$}.
 \end{equation}
\item[(2)]
If  $\vartheta$ is as in Proposition \ref{prop-mild solution-new}, then this solution has $\mathbb{P}$-a.s. paths in $C([t_0, \infty);H \cap \D(A^{\frac{\vartheta}{2}})) \cap C_w([t_0, \infty);V) 
\cap Y^\vartheta _{[t_0,\infty)}$ and
for every $T\geq t_0$,
 \begin{align}\label{eqn-propertySolution-SS}
	u\in L^{2r/\alpha}( \Omega;Y^\vartheta _{[t_0,T]})
	\end{align}
\item[(3)]
Moreover, for every $t\in [t_0,\infty)$
 the following equality  in $\EAdual$
	\begin{align}\label{eqn-ItoFormSolution-SS}
	u(t)=  u_{t_0}- \int_{t_0}^t \left[\im A u(s)+\im F(u(s))+\beta  u(s) -b(u(s))\right] \df s     \nonumber
	\\
	- \im \int_{t_0}^t B u(s) \df W(s) -\im \int_{t_0}^t G(u(s)) \,\df \newW(s)
	\end{align}
	holds $\mathbb{P}$-almost surely.
\end{trivlist}
\end{theorem}

\begin{remark}
\label{FR2bis} When one works under Assumptions \ref{assumption-A-space}(ii) or (iii), the regularity assumptions on the domain $\mathscr{O}$ that ensure the uniqueness of the solution (and, consequently, the existence of an invariant measure too) are as follows. One needs to require that $\mathscr{O}$ has $C^{\infty}$ (smooth) boundary and  is relatively compact. The relative compactness of the domain is required to apply the needed Strichartz estimates, whereas the smoothness of the boundary is needed both for the Strichartz estimates to hold, and for the definition of the fractional Sobolev spaces (for this point see also Remark \ref{remreg}.) Notice that, to infer just the existence of a martingale solution, less regularity on the domain is required, see Remark \ref{FR2}.
\end{remark}

\section{Sequential weak  Feller property}\label{sec-weakCont}

Consider the
 family of operators  $\{P_t\}_{t\ge 0}$ defined  in \eqref{def-P_t}.
Our aim is  now to prove the sequential weak  Feller property in $V$ at any fixed time $t$ and that
 $\{P_t\}_{t\ge 0}$ is a Markov semigroup.
 These are part of  the ingredients to prove existence of invariant measures. The
sequential weak  Feller property
 relies on an argument of continuous dependence of the solution on the initial data.
 The Markov property depends on the pathwise uniqueness.

Let us now recall the following fundamental definition. A function
$\phi:\EA\to\mathbb R$ is sequentially continuous w.r.t. the weak
topology  on $\EA$ (we write $\phi \in SC(\EA_w)$) if  $\phi(x_k)\to \phi(x)$ when $x_n\rightharpoonup x \text{ in } \EA$.
Using  the subindex  $SC_b(\EA_w)$ we add the property of boundedness.
We recall that the following inclusions hold
\[
  C_b(\EA_w) \subset SC_b(\EA_w) \subset    C_b(\EA_n) .
\]
Here  $\EA_w$  denotes $\EA$ equipped with the weak topology  and  $\EA_n$  denotes $\EA$ equipped with
the strong (norm)  topology.

Let us also notice that because $\EA$ is a separable space,
the weak Borel and the (strong) Borel $\sigma$-fields on it are equal, i.e.
$\mathscr{B}(\EA_n)=\mathscr{B}(\EA_w)$, see, e.g., \cite{Edgar_1977}.

For $x \in V$, by $u(\cdot;x)=\bigl\{ u(t;x): t\geq 0\bigr\}$ we denote the unique strong solution
 with the deterministic initial condition $x$, defined on   the probability space $(\Omega,\F,\Prob)$.
Bearing in mind the  Remark \ref{rem_new_id}(ii) the unique strong solution of Problem \ref{eqn-ProblemStratonovich} with deterministic initial data
 enjoyes  property \eqref{add-regularity-t0-T} for every finite $r \ge 1$.

It is known, see  \cite[Corollary 23]{On2005},
that the transition function is jointly measurable, that is  for any Borel subset $\Gamma$  of $\EA$
the map \[V \times [0,\infty)\ni  (x,t)\mapsto \Prob\{u(t;x)\in\Gamma\}\in \mathbb{R}\] is measurable.

 We define the  family  of operators $P_t$, for any $t>0$,
\begin{equation}\label{eqn-Markov}
(P_t\phi)(x)= {\mathbb E}[\phi( u(t;x))], \;\; x\in V.
\end{equation}
If $\phi:V\to \mathbb R$ is a  bounded and Borel measurable  function,
then the same holds for $P_t\phi$.

We first provide the following result of continuous dependence on the initial data.
For $0s\ge 0$ let   $u(t;s,x) $, $t \geq s$, denote the solution of equation \eqref{eqn-ProblemStratonovich}    when the initial value  at time $s$ is $x$.
According to the previous notation  used so far we have $u(t;x)=u(t;0,x)$, $t\geq 0$.

\begin{theorem}\label{gener-dep-x}
Let Assumptions \ref{assumption-A-space}, \ref{assumption-F_def} and \ref{assumption-stochastic} hold.
Assume $\newr\in [2,\infty)$ and  $t_0\in [0,\infty)$.
Let   $(x_k)_k$ and $x $ be   $\F_{t_0}$-measurable  Borel $\EA$-valued random variables with finite $r(\alpha+1)$-th moments. If
\begin{equation}\label{stime-unif-k-t0}
\sup_k \mathbb E\|x_k\|_\EA^{r(\alpha+1)}<\infty,\qquad
 \mathbb E\|x\|_\EA^{r(\alpha+1)}<\infty
\end{equation}
and
$\mathbb{P}$-a.s. $x_k$ weakly converges to $x$ in $V$, then
\begin{equation}\label{mean-convergence}
\lim_{k\to \infty}\E \phi(u(t;t_0,x_k))=\E \phi(u(t;t_0,x))
\end{equation}
for any  $t \in (t_0,\infty)$ and any $\phi\in SC_b(\EA_w)$.
\end{theorem}
\begin{proof} Let us choose  and fix $ t>t_0 \geq 0$ and $\phi\in SC_b(\EA_w)$.
By  Theorem \ref{thm-strong existence} we know  that for each initial data there exists a unique solution
to equation \eqref{eqn-ProblemStratonovich}. Moreover  we obtain the uniform estimate
\begin{equation}\label{stima-unif-su u-con-dati-x_k}
\sup_{k\in\N} \E \left[\Vert u(\cdot;t_0,x_k)\Vert_{L^\infty(t_0,T;\EA)}^2\right] <\infty, \;\; \mbox{ for every } T\geq t_0.
\end{equation}
and  the Aldous condition as in
inequality \eqref{eqn-5.22} in Corollary \ref{cor-aprioriEA} and Proposition \ref{EstimatesGalerkinSolution} part (b).
The only difference with respect to the Galerkin approximation sequence  is on the initial data,
but assumption \eqref{stime-unif-k-t0}  is a uniform estimate on them leading to \eqref{stima-unif-su u-con-dati-x_k}.
Therefore we deduce  that the sequence $\bigl(\law{\Prob}{u(\cdot;t_0,x_k)}\bigr)_{k=1}^\infty $
 is tight in the space
$Z_{[t_0,\infty)}= C([t_0,\infty);\EAdual) \cap L^{\alpha+1}_{\rm{loc}}([t_0,\infty);\LalphaPlusEins)\cap C_w([t_0,\infty);\EA)$.
 Hence Corollary \ref{corollaryEstimatesToASconvergence}
applies, i.e.  there exists a subsequence $\{u(\cdot;t_0,x_{n_k})\}_k$ such that  on  a new
 probability space $(\tilde{\Omega},\tilde{\Filtration},\tildeProb)$  there exist
$Z_{[t_0,\infty]}$-valued  random variables
$\{\tilde{u}_k\}_{k \in \mathbb{N}}$ and  $\tilde{u}$   with $\law{\tildeProb}{\tilde{u}_k}=\law{\Prob}{u(\cdot;t_0,x_{n_k})}$ for any $k\in\N$  such that
$\tilde{u}_k \to \tilde{u}$ $\tildeProb$-almost surely in $Z_{[t_0,\infty)}$,   as  $k\to \infty$ and,
by repeating the proof of Theorem \ref{thm-existence} given in subsections \ref{Section_Convergence} and \ref{Section_inequalities},
 the system
$\bigl( \tOmega ,  {\tilde{\mathbb{F}}}, \tp ,  \tu  \bigr)$ is also a martingale  solution with the initial of equation  \eqref{eqn-ProblemStratonovich} with the initial  value $x$ at time $t_0$. In particular,  because $\tilde u_k$ converges to    $\tilde u$ in    $C_w([t_0,\infty);\EA)$,  we deduce that
\begin{equation}
\tilde u_k(t) \to  \tilde u(t)  \mbox{ weakly  in }\EA, \;\;  \tilde \Prob-\mbox{a.s.}.
\end{equation}
 Hence, since function $\phi:\EA \to\mathbb R$ is   bounded and  sequentially  weak
continuous, by the Lebesgue Dominated Convergence Theorem we infer  that
   $\tilde \E [\phi(\tilde u_k(t))] \to \tilde \E [\phi(\tilde u(t))]$. Since $\law{\tildeProb}{\tilde{u}_k}=\law{\Prob}{u(\cdot;t_0,x_{n_k})}$ for any $k\in\N$, we infer that
 \begin{equation}
\lim_{k \to \infty}  \E [\phi( u(\cdot;t_0,x_{n_k}))] = \tilde \E [\phi(\tilde u(t))].
\end{equation}
Since the system
$\bigl( \tOmega ,  {\tilde{\mathbb{F}}}, \tp ,  \tu  \bigr)$ is also a martingale solution of equation  \eqref{eqn-ProblemStratonovich} with the initial  value $x$ at time $t_0$ and since by part (ii) in Theorem \ref{thm-pathwise uniqueness} the solution of \eqref{eqn-ProblemStratonovich} is unique in law, i.e.
\[
  \mbox{the processes $u(\cdot;t_0,x)$ and $\tilde u $ have the same law on the space } Z_{[t_0,\infty)},
\]
we infer that
\begin{equation}  \label{E:bw_Feller_3}
     \tilde{\mathbb{E}}[\phi(\tu (t))] = \mathbb{E}[\phi(u (t))]
\end{equation}
Summing up we proved that
\begin{equation}\label{mean-convergence-2}
\lim_{k\to \infty} \E [\phi( u(\cdot;t_0,x_{n_k}))]=\E \phi(u(t;t_0,x))
\end{equation}
Finally, using the standard sub-subsequence argument, we infer that the whole sequence $\E [\phi( u(\cdot;t_0,x_{k}))]$ is convergent and
\eqref{mean-convergence} holds.
This completes the proof of Theorem \ref{gener-dep-x}.
\end{proof}

\dela{
Since by assumptions $( \Omega , \fcal ,  {\mathbb{F}}, \p , W,u )$ is  a martingale  solution of equation \eqref{E:NS'} with the initial data $u_0$
and $\bigl( \tOmega , \tfcal ,  {\tilde{\mathbb{F}}}, \tp , \tW,\tu  \bigr)$ is also a martingale  solution with the initial of equation \eqref{E:NS'} with the initial data $u_0$
 and since the solution of \eqref{E:NS'} is unique in law, we infer that
\[
  \mbox{the processes $u$ and $\tu $ have the same law on the space } {\mathcal{Z}}_{t}.
\]
Hence
\begin{equation}  \label{E:bw_Feller_3}
     \tilde{\mathbb{E}}[\phi(\tu (t))] = \mathbb{E}[\phi(u (t))] = {P}_{t}\phi ({u}_{0}).
\end{equation}
Thus by \eqref{E:bw_Feller_1}, \eqref{E:bw_Feller_2} and \eqref{E:bw_Feller_3}, we infer that
\[
   \lim_{k\to \infty } {P}_{t}\phi ({u}_{0{n}_{k}}) =  {P}_{t}\phi ({u}_{0}).
\]
Using the sub-subsequence argument, we infer that the whole sequence ${({P}_{t}\phi ({u}_{0n}))}_{n\in \mathbb{N} }$ is convergent and
\[
   \lim_{n\to \infty } {P}_{t}\phi ({u}_{0n}) =  {P}_{t}\phi ({u}_{0}),
\]
which completes the proof of Proposition \ref{prop-Feller_bw}.
}

It easily follows the sequential weak Feller property in $V$, that is
\[P_t:SC_b(\EA_w)\to SC_b(\EA_w), \mbox{ for any } t>0\]

\begin{corollary}\label{Feller}
Let Assumptions \ref{assumption-A-space}, \ref{assumption-F_def} and \ref{assumption-stochastic} hold.
For  any $t>0$, if
$\phi \in SC_b(\EA_w)$ then $P_t\phi \in SC_b(\EA_w)$.
\end{corollary}
\begin{proof} Let us fix any  $t>0$ and $\phi \in SC_b(\EA_w)$.
We have to prove that, given a sequence $\left(x_k \right)_k \subset \EA$ which converges weakly in $\EA$ to
$x$,  the sequence $ P_t\phi(x_k)$ converges to
$P_t\phi(x) $.
\dela{We  fix $T>t$, and we will prove the result for any time  in $ (0,T]$.}

By the weak convergence we get the uniform estimate
\[
\sup_k\|x_k\|_\EA<\infty;
\]
hence the sequence of deterministic initial data fulfills the assumptions of Theorem \ref{gener-dep-x} on the time interval $[0,T]$, i.e. we set $t_0=0$. Therefore \eqref{mean-convergence} holds true;  bearing in mind the definition \eqref{eqn-Markov} of the operator $P_t$ we conclude the proof.
\end{proof}

Now we consider the  Markov property.
\begin{proposition}\label{PROP-Markov}
Let Assumptions \ref{assumption-A-space}, \ref{assumption-F_def} and \ref{assumption-stochastic} hold.
For every $\phi\in SC_b(\EA_w )$,
$x\in \EA$  and $t,s>0$ we have
\begin{equation}\label{Markov-reg}
  {\mathbb E}\left[  \phi\left(  u(t+s;x)\right)  |{\mathcal F}_s\right]
  =
  \left(P_{t}\phi\right)  \left(  u(s;x)\right)  \qquad {\mathbb P}\text{-a.s.}
\end{equation}
\end{proposition}
\begin{proof}
The proof is classical when the solution is a continuous process
taking values in a separable Banach space, endowed with the strong topology;
see, e.g. \cite[Theorem 9.14 ]{DPZ1}.
Hence we highlight only the differences when dealing with the weak topology in $\EA$.

By the pathwise uniqueness we know that for all $t,s>0$
\begin{equation}u(t+s;0, x)=u(t+s;s,u(s;0,x)) \qquad \ a.s.
\label{eqn-question 01}
\end{equation}
Set $\eta=u(s;0,x)$;
the  identity
 \eqref{Markov-reg} can be written as
\begin{equation}\label{markov}
{\mathbb E}\left[  \phi\left(u(t+s;s,\eta)\right)  |{\mathcal F}_{s}\right]
  =
  \left(P_{t}\phi\right)  \left( \eta\right) \qquad  \Prob\text{-a.s.}
\
\end{equation}
We notice that
given any  deterministic initial data $x\in \EA$,  Theorem \ref{mainTh}  gives that  $\mathbb E\|u(s;0,x)\|_V^{2r}<\infty$ for any finite $r\ge 1$.
Let us choose $r=\alpha+1$.

Hence it is enough to show that equality \eqref{markov} holds for an arbitrary
$2(\alpha+1)$-integrable  $\mathcal F_s$-measurable random variable $\eta$.

Let us first suppose that  $\eta$ is a simple random variable of the form
$\sum_{j=1}^N x_j \mathbbm 1_{\Gamma_j}$ with $x_j \in \EA$ and a partition
$\Gamma_1,\ldots,\Gamma_N\subset \mathcal F_s$. Then   \eqref{markov}
is proved as usual  by noticing that $u(t+s;s,\eta)=\sum_{j=1}^N u(t+s;s,x_j)\mathbbm 1_{\Gamma_j}$;
  indeed, $\Prob$-a.s. we have the following relationships
\begin{equation}
\begin{split}
&{\mathbb E}\left[  \phi\left(u(t+s;s,\eta)\right)  |{\mathcal F}_{s}\right]
=\sum_{j=1}^N {\mathbb E}\left[ \mathbbm 1_{\Gamma_j} \phi\left(u(t+s;s,x_j)\right)  |{\mathcal F}_{s}\right]
\\&=\sum_{j=1}^N \mathbbm 1_{\Gamma_j}  {\mathbb E}\left[  \phi\left(u(t+s;s,x_j)\right)  |{\mathcal F}_{s}\right]  =\sum_{j=1}^N \mathbbm 1_{\Gamma_j}  {\mathbb E}\left( \phi(u(t+s;s,x_j))\right)
\\& =\sum_{j=1}^N \mathbbm 1_{\Gamma_j}  {\mathbb E}\left( \phi(u(t;0,x_j))\right)
 =\sum_{j=1}^N \mathbbm 1_{\Gamma_j}  (P_t\phi)(x_j)=(P_t\phi)(\eta).
\end{split}
\end{equation}
We used that $ u(t+s;s,x_j)$  is independent of  ${\mathcal F}_{s}$ and  that $u(t+s;s,x_j)$ and $u(t;0,x_j)$ have the same law.

Otherwise, for general $\eta\in L^{2(\alpha+1)}(\Omega)$
 there exists a sequence of simple random variables $\eta_n$  with $\lim_{n\to\infty}\eta_n=\eta$  in $L^{2(\alpha+1)}(\Omega)$ and $\Prob$-a.s.
(in the  strong topology of $\EA$, hence weak too);
moreover
\begin{equation}\label{stime-eta-n}
\sup_n \mathbb E \|\eta_n\|_\EA^{2(\alpha+1)}\le \mathbb E \|\eta\|_\EA^{2(\alpha+1)}<\infty.
\end{equation}
We checked before that
\begin{equation}\label{markov-step}
\mathbb E \left[  \phi\left(u(t+s;s,\eta_n)\right)  |{\mathcal F}_{s}\right]
  =
  \left(P_{t}\phi\right)  \left( \eta_n\right) \qquad  \mathbb P\text{-a.s.}
\
\end{equation}
Thanks to \eqref{stime-eta-n}  we can  proceed as in  Theorem \ref{gener-dep-x} on the time interval $[s,s+t]$ in order to deal with the conditional expectation and  pass to the limit  in the l.h.s. of \eqref{markov-step}. Thus
we have proved that the l.h.s. of \eqref{markov-step} converges to the l.h.s. of
\eqref{Markov-reg} as $n \to \infty$.

 As far as the convergence of the  r.h.s. of \eqref{markov-step}  is concerned, we
 know from Corollary \ref{Feller} that $P_t\phi\in SC_b(\EA_w)$; since $\Prob$-a.s.
   $\eta_n$ converges to $\eta$ weakly in $V$, we obtain that
   $P_t\phi(\eta_n)\to P_t\phi(\eta)$, $\Prob$-a.s. .
\end{proof}

Taking the mathematical expectation in \eqref{Markov-reg},  we deduce that
 the family $\{P_t\}_{t\ge 0}$ is a Markov semigroup, namely $P_{t+s}=P_t P_s$ for any $s,t>0$.

\section{Existence of an invariant measure}\label{s-inv}

Given the  sequential weak Feller  Markov semigroup on the  separable Hilbert space $V$,
 we can define an invariant measure $\pi$ for equation \eqref{eqn-ProblemStratonovich} as a
Borel probability measure  on $V$ such that for any time $t\ge 0$
\begin{equation}\label{def-invmeas}
\int_{\EA} P_t \phi\ d\pi=\int_{\EA} \phi\ d\pi, \qquad \forall  \phi\in SC_b(V_w).
\end{equation}

Let us recall a result of Maslowski-Seidler \cite{Mas-Seid} about  the
existence of an invariant measure. This is a modification of
the Krylov-Bogoliubov technique, usually  presented in the setting of strong topologies, see, e.g.,  \cite{KB} and \cite{DPZ2}.

\begin{theorem}\label{MSprop}
Assume that\\
i) the semigroup $\{P_t\}_{t\ge 0}$ is sequential weak Feller in $\EA$;\\
ii) for any $\varepsilon >0$ there exists $R_\varepsilon>0$ such that
\[
\sup_{T\ge 1} \frac 1T \int_0^T \mathbb P\left(\|u(t;0)\|_{\EA}>R_\varepsilon\right)dt<\varepsilon .
\]
Then there exists  at least one invariant measure for equation \eqref{eqn-ProblemStratonovich}.
\end{theorem}

Hence we get our main result on invariant measures as defined by \eqref{def-invmeas}.

\begin{theorem}\label{th:mis-inv-2d}
Let Assumptions \ref{assumption-A-space}, \ref{assumption-F_def} and \ref{assumption-stochastic} hold.
 If condition
\eqref{condizione-beta} is fulfilled,
then there exists at least one invariant measure  $\pi$ for
equation \eqref{eqn-ProblemStratonovich} and $\pi(\EA)=1$.
\end{theorem}
\begin{proof}  The proof is based on Theorem \ref{MSprop}.
The sequential weak Feller property has been proved before in Corollary \ref{Feller}.
For the tightness
it is enough to recall \eqref{Ebound} and the Chebyshev inequality, so
\[
\mathbb P\left(\|u(t;0)\|_{\EA}>R \right)\le \frac{\E \|u(t;0)\|_\EA^2}{R^2}\le \frac C{R^2} \qquad \text{ for all }t\ge 0
\]
where the constant $C$ is independent of $t$.  Hence we have verified that the two assumptions of Theorem \ref{MSprop} are fulfilled.
\end{proof}

\section{Existence and uniqueness of the invariant measure with purely multiplicative noise}\label{sect-unique}

Assume that the coefficients characterizing the operator $G$  are such that $C_1=0$; this implies that $G(0)=0$, see \eqref{eqn-G crescita}.
Hence the zero process is a solution of equation \eqref{eqn-ProblemStratonovich}, or equivalently $\delta_0$ is an invariant measure.
Let us prove that this is the unique invariant measure if
\[
\beta>\tfrac 12 \tilde C_1^2.
\]
This is our result
\begin{theorem}
\label{cordelta0}
Let Assumptions \ref{assumption-A-space}, \ref{assumption-F_def}  hold and
Assumption \ref{assumption-stochastic} holds with $C_1=0$, that is
\begin{equation}\label{crescitaG-0}
\|G(u)\|_{\gamma(\Yg,H)} \le \tilde C_1\|u\|_H\qquad \forall u \in H.
\end{equation}
If
\begin{equation}
\beta>\tfrac 12 \tilde C_1^2,
\end{equation}
then  there exists a unique invariant measure for equation \eqref{eqn-ProblemStratonovich} given by $\pi=\delta_0$.
\end{theorem}

The proof is based on an auxiliary result
\begin{lemma}
\label{lem1_delta0}
Under the assumptions of Theorem \ref{cordelta0},
 there exists a constant $\lambda>0$ such that, if $u$ is a solution to \eqref{eqn-ProblemStratonovich}, 
 then the process
 $\{e^{\lambda t}\|u(t)\|^2_H\}_{t \ge 0}$ is a non-negative continuous  supermartingale.
\end{lemma}
\begin{proof} Let $u$ be the unique solution to equation \eqref{eqn-ProblemStratonovich} starting from $u_0 \in V$.
We apply the It\^o formula to the process $g(r, u(r))$ with $g(r,x):=e^{\lambda r}\|x\|^2_H$, for $r \in [s,t]$, since we know that the paths are in $  C([0,T];H)$.
We have
\begin{equation*}
{\rm d}\left(e^{\lambda t}\|u(t)\|_H^2\right)= \lambda e^{\lambda t} \|u(t)\|_H^2+e^{\lambda t}{\rm d}\|u(t)\|_H^2.
\end{equation*}
By the same computations done in the proof of Proposition \ref{MassEstimateGalerkinSolution}
we get
\[
d\|u(t)\|_H^2+2\beta\|u(t)\|_H^2 dt = \|G(u(t))\|^2_{\gamma(Y_2,H)} dt + 2 \Real \skpH{u(t)}{ -\im  G(u(t))  \df \textbf{W}(t)}.
\]
Hence, using \eqref{crescitaG-0}
\[
{\rm d}\left(e^{\lambda t}\|u(t)\|_H^2\right)
\le
 (\lambda -2\beta+\tilde C_1 ^2) e^{\lambda t} \|u(t)\|_H^2+ 2 \Real \skpH{u(t)}{ -\im  G(u(t))  \df \textbf{W}(t)}.
 \]
Taking the conditional expected value on both sides, we get
\begin{equation*}
\frac{{\rm d}}{{\rm d}t}\mathbb{E}[e^{\lambda t}  \|u(t)\|_H^2 |\mathscr{F}_s]
\le (\lambda -2\beta+\tilde C_1 ^2)\mathbb{E}[e^{\lambda t} \|u(t)\|_H^2|\mathscr{F}_s], \quad s <t,
\end{equation*}
so that
\begin{equation*}
\mathbb{E}[e^{\lambda t}  \|u(t)\|_H^2 |\mathscr{F}_s]
\le
e^{(\lambda-2\beta+\tilde C_1 ^2)(t-s)}e^{\lambda s} \|u(s)\|_H^2 ,\quad s <t.
\end{equation*}
If we now choose $\lambda>0$ such that $\lambda-2\beta+\tilde C_1 ^2<0$ we obtain
\begin{equation}
\label{eq4.}
\mathbb{E}[e^{\lambda t}\|u(t)\|_H^2 |\mathscr{F}_s] \le e^{\lambda s} \|u(s)\|_H^2  \qquad \forall  \ s<t.
\end{equation}
\end{proof}

 Let us now prove Theorem \ref{cordelta0}.
\begin{proof}[Proof of Theorem \ref{cordelta0}]
 For the (unique) solution of problem \eqref{eqn-ProblemStratonovich}, we put in evidence the initial datum $u_0 \in \EA$
by writing $u(\cdot;u_0)$.
Lemma \ref{lem1_delta0} yields
\begin{equation*}
\label{es1_delta0}
\mathbb{E} \|u(t;u_0)\|^2_H \le e^{-\lambda t} \|u_0\|^2_H, \qquad \forall \ t \ge 0.
\end{equation*}
Proceeding as in  \cite[Proof of Theorem 1.4]{BrzMasSei} one then shows by means of the Borel-Cantelli Lemma
that, for every $\bar \lambda \in (0, \lambda)$,
there exists a $\mathbb{P}$-a.s. finite function $t_0: \Omega \rightarrow [0, \infty]$ such that
\begin{equation*}
 \|u(t;u_0)\|^2_H \le e^{-\bar \lambda t} \|u_0\|^2_H, \qquad \forall \ t \ge t_0, \quad \mathbb{P}-a.s.
\end{equation*}
Hence
 \begin{equation*}
\|u(t;u_0)\|_H \rightarrow 0, \quad \text{as} \ t \rightarrow \infty, \quad \mathbb{P}-a.s.
\end{equation*}

Now take any function $\phi:V\to\mathbb R$ which is continuous with respect to the $H$-norm; write
$\phi \in C(V_H)$.  By the above we have
\[
\lim_{t\to+\infty} \phi(u(t;u_0))=\phi(0)
\]
for any initial data $u_0$. Moreover this function $\phi$ belongs to $SC_b(V_w)$, because
 the embedding
$V\subset H$ is compact, so that any sequence weakly convergent in $V$ is strongly convergent in $H$.

Now let $\pi$ be any invariant measure. Then we have
\[
\int_V P_t \phi\ d\pi=\int_V \phi\  d\pi \qquad \forall \phi \in SC_b(V_w), t\ge 0.
\]
Taking $\phi  \in C_b(V_H)$, by the Dominated Convergence Theorem the left hand side converges to $\phi(0)$
as $t \to +\infty$. This implies that
\begin{equation}\label{pi-inv-0}
\phi(0)=\int_V \phi\  d\pi \qquad \forall \phi \in C_b(V_H).
\end{equation}
We can conclude that $\pi=\delta_0$ if this equality holds for any  $\phi \in SC_b(V_w)$.
In fact, take  
$\phi(u)=e^{i\langle u, h\rangle}$, $h\in \EAdual$, so to get that the integral defines the characteristic function and this is enough to determine the measure.

Now we show by approximation that \eqref{pi-inv-0} holds for any $\phi \in SC_b(V_w)$.
Given $u\in V$ and $\epsilon>0$  define $u_\epsilon=(I+\epsilon A)^{-1}u$ so that
 \begin{equation}\label{pointwise}
\lim_{\epsilon \to 0}\|u_\epsilon-u\|_V=0
\end{equation}
and
\[
\|u_\epsilon\|_V \le C(\epsilon) \|u\|_H
\]
where the constant $C(\epsilon)$ is not bounded as $\epsilon \to 0$.\\
Now take any  $\phi \in SC_b(V_w)\subset C_b(V)$ and define
\[\phi_\epsilon(u)=\phi((I+\epsilon A)^{-1}u).\]
It is clear $\phi_\epsilon(u)\to \phi(u)$ for any $u \in V$.
Moreover  from the previous arguments we have that $\phi_\epsilon\in C_b(V_H)$ and therefore we can write the identity
\[
\int_V \phi_\epsilon\  d\pi =\phi(0).
\]
Now passing in the limit as $\epsilon \to 0$ in the l.h.s., by means of the  dominated convergence theorem
 and \eqref{pointwise} we get
 \begin{equation}
\int_V \phi\  d\pi =\phi(0)\qquad \forall \phi \in SC_b(V_w).
\end{equation}
\end{proof}

\appendix

\section{Laplacian-type operators on manifolds and on bounded domains with Dirichlet/Neumann boundary conditions and Strichartz estimates}
\label{app_Lapl}

In Section \ref{sec-UniquenessSection} we need some results about Sobolev spaces on two-dimensional manifolds and on
bounded domains of $\mathbb{R}^2$ with either Dirichlet or Neumann boundary conditions. We collect them here.
Then we derive the Strichartz estimates employed in Section \ref{sec-UniquenessSection}.

\subsection{Dirichlet and Neumann Laplacians on bounded domains and Sobolev spaces}
\label{LDNappendix}

The present Section is devoted to recall some basic facts about Sobolev spaces on bounded domains of $\mathbb{R}^2$ and their connection with the fractional domains of the realization of the Laplace operator with Dirichlet and Neumann boundary conditions on $L^q$ spaces, $q \in (1, \infty)$. We recall also some Sobolev embedding theorems.

Let $\mathscr{O}$ be a bounded smooth domain of
$\mathbb{R}^2$.
For any $s \in \mathbb{R}$ and $q \in (1, \infty)$, the Sobolev space $H^{s,q}(\mathscr{O})$ is defined as the restriction of
$H^{s,q}(\mathbb{R}^2)$, see \cite[Definition 2.3.1]{Triebel},  to $\mathscr{O}$, see  \cite[Definition 4.2.1(1).]{Triebel}. For
$q=2$ we write $H^s(\mathscr{O}):=H^{s,2}(\mathscr{O})$. When $s$ is a natural number the space $H^{s,q}(\mathscr{O})$
coincides with the Sobolev space $W^{s,q}(\mathscr{O})$, see \cite[Remark 2.3.1($2^*$)]{Triebel}.
We denote by $H^{s,q}_0(\mathscr{O})$ the completition of $C^{\infty}_0(\mathscr{O})$ (set of smooth functions defined
over $\mathscr{O}$ with compact support) in $H^{s,q}(\mathscr{O})$, see \cite[Definition 4.2.1(2)]{Triebel}.

In the following Proposition we list some embedding properties of the Sobolev spaces.
\begin{proposition}
\label{B1}
Let $\mathscr{O}$ be a bounded smooth domain of $\mathbb{R}^2$, then
\begin{enumerate}[label=\roman{*}), ref=(\roman{*})]
\item for $2 \le q < \infty$, the embedding $H^{1}(\mathscr{O})\subset L^q(\mathscr{O}),$ is continuous and compact.
\item for $1<q<\infty$ and $s>\frac 2 q$, $H^{s,q}(\mathscr{O}) \subset L^{\infty}(\mathscr{O})$,
\item for $1 < p \le q <\infty$, $0<t<s<1$ and $s-\frac 2p\ge t-\frac 2q$, $H^{s,p}(\mathscr{O}) \subset H^{t,q}(\mathscr{O})$.
\end{enumerate}
\end{proposition}
\begin{proof}
\begin{enumerate}[label=\roman{*}), ref=(\roman{*})]
\item See \cite[Theorem 11.23 and Exercise 11.26]{Leoni}.
\item See \cite[Theorem 4.6.1.(e)]{Triebel}.
\item See \cite[Remark 2.8.1(2) and Theorem 4.2.2.(1)]{Triebel}.
\end{enumerate}
\end{proof}

\begin{remark}
\label{remreg}
Since we always consider the case $|s|<2$, where $2$ is the dimension of the space, it would be enough to assume
$\mathscr{O}$ bounded and $C^2$: see \cite[Remark 4.2.2 (2)]{Triebel} for the relation between the regularity assumptions
on the domain and the range of the exponent $s$. Smoothness of the boundary is in any case necessary for the Strichartz
estimates we consider, to hold.
\end{remark}

Let us now turn to the characterization of the domains of the Dirichlet and Neumann Laplacian.
Let $-A_D$ and $-A_N$ be, respectively,  the realization of the Laplace operator in $L^2(\mathscr{O})$ with zero Dirichlet
and zero Neumann boundary conditions, with domains
\begin{equation*}
\D(A_D)=\{f \in H^{2}(\mathscr{O}) \ : \gamma_{|\partial \mathscr{O}}f=0\},
\end{equation*}
\begin{equation*}
\D(A_N)=\{f \in H^{2}(\mathscr{O}) \ : \gamma_{|\partial \mathscr{O}}\partial_{\nu}f=0\},
\end{equation*}
where by $\gamma_{|\partial \mathscr{O}}$ we denote the trace operator and by $\nu$ the outward normal unit vector to
$\partial \mathscr{O}$.
It is well known, see e.g. \cite{Temam_IDD}, that both the Dirichlet and the Neumann Laplacian are self-adjoint positive
operators on $L^2(\mathscr{O})$. By means of the functional calculus for self-adjoint operators, see e.g. \cite{Zeidler},
the powers $A^s_D$ and $A^s_N$ of the operators $A_D$ and $A_N$, for every $s \in \mathbb{R}$, are then well defined
and self-adjoint. Thus one can introduce the spaces $\D(A_D^{\frac s2})$ and $\D(A_N^{\frac s2})$, for every
$s \in \mathbb{R}$ in accordance with the spectral theorem.

To derive the needed Strichartz estimates we have to consider the realizations of Dirichlet and Neumann Laplacian on Banach spaces $L^q(\mathscr{O})$, $q \in (1, \infty)$. For this part we mainly refer to \cite{Grubb} and to therein references.
The domains of the realizations of the Dirichlet and Neumann Laplacian in $L^q(\mathscr{O})$, denoted hereafter by $A_{D_q}$ and $A_{N_q}$ respectively, are
\begin{equation*}
\D(A_{D_q})=\{f \in H^{2,q}(\mathscr{O}) \ : \gamma_{|\partial \mathscr{O}}f=0\},
\end{equation*}
\begin{equation*}
\D(A_{N_q})=\{f \in H^{2,q}(\mathscr{O}) \ : \gamma_{|\partial \mathscr{O}}\partial_{\nu}f=0\}.
\end{equation*}

\begin{definition}
\label{def-sobolev spaces with boundary conditions}
Let $s \in (0,2)$ and $q \in (1, \infty).$ Define the spaces
\begin{equation*}
H^{s,q}_D(\mathscr{O}):=\{ f \in H^{s,q}(\mathscr{O}): \gamma_{|\partial \mathscr{O}}f=0 \ \text{if} \ s >\frac 1q \},
\end{equation*}
\begin{equation*}
H^{s,q}_N(\mathscr{O})=\{ f \in H^{s,q}(\mathscr{O}): \gamma_{|\partial \mathscr{O}}\partial_{\nu}f=0 \ \text{if} \ s >1+\frac 1q \}.
\end{equation*}
\end{definition}

\begin{theorem}
\label{dom_D_N}
Let $q \in (1, \infty)$, then
\begin{enumerate}[label=\roman{*}), ref=(\roman{*})]
\item
for $s \in (0,2) \setminus \{\frac 1q\}$, $H_D^{s,q}(\mathscr{O})=\D((A_{D_q})^{\frac s2})$,
\item for $s \in (0,2) \setminus \{1+\frac 1q\}$, $H_N^{s,q}(\mathscr{O})=\D((A_{N_q})^{\frac s2})$.
\end{enumerate}
\end{theorem}
\begin{proof}
See \cite[Theorem 2.2]{Grubb}.
\end{proof}

\begin{remark}
\label{B4}
Theorem \ref{dom_D_N} and \cite[Theorem 4.3.2 (1)]{Triebel}
 yield, in particular,
 \begin{equation}
\label{realization_sD}
\D(A_D^{\frac s2})=
\begin{cases}
H^{s} \ \text{for} \ s \in \left(0,\frac 12\right),
\\
H^{s}_0\ \text{for} \ s \in \left(\frac 12, 1\right].
\end{cases}
\end{equation}
and
\begin{equation}
\label{realization_sN}
\D(A_N^{\frac s2})=H^{s}, \qquad \text{for} \ s \in (0,1].
\end{equation}
\end{remark}

\subsection{Laplace-Beltrami operators on compact Riemannian manifolds and Sobolev spaces}
\label{LBappendix}

In the present Section we recall some results about Sobolev spaces on manifolds and their connection with the fractional domains of the Laplace-Beltrami operator.

We consider $(M,g)$, a compact
 Riemannian manifold without boundary of dimension two.
 By $-A=\Delta_g$ we denote the Laplace-Beltrami operator on $L^2(M)$. Theorem 3.5 in \cite{Strichartz} states that the
 restriction of $\left(e^{-tA}\right)_{t\ge 0}$
to $L^2(M)\cap L^q(M)$ extends to a strongly continuous semigroup on $L^q(M)$, $q \in [1, \infty)$. The infinitesimal
generator of such a semigroup, denoted by $-A_{g,q}=\Delta_{g,q}$, is called the Laplace-Beltrami operator on $L^q(M)$.
 With this extended semigroup one can define the fractional powers of the operator $-A_{g,q}$. For our needs it is  sufficient
 to recall the characterization of the fractional domains of the Laplace-Beltrami operator on $L^q(M)$, in terms of Sobolev
 spaces.

\begin{proposition}
\label{propB6}
Let $(M,g)$ be a compact Riemannian manifold without boundary of dimension two.
Let $s \ge 0$ and $q \in (1, \infty)$. The fractional Sobolev space $H^{s,q}(M)$ defined as
\begin{align*}
		H^{s,q}(M):=\left\{f\in L^q(M): \norm{f}_{H^{s,q}(M)}:=\left(\sum_{i\in I} \norm{(\Psi_i f)\circ \kappa_i^{-1}}_{H^{s,q}(\mathbb{R}^d)}^q\right)^\frac{1}{q}<\infty\right\},
		\end{align*}
where $\mathscr{A}:=\left(U_i,\kappa_i\right)_{i\in I}$ is an atlas of $M$ and $\left(\Psi_i\right)_{i\in I}$ a partition of unity
subordinate to $\mathscr{A}$, has the following properties:
\begin{enumerate}[label=\roman{*}), ref=(\roman{*})]
\item $H^{s,q} = \D((-\Delta_{g,q})^{\frac s2})$.
\item for $s>\frac{2}{q},$ we have $H^{s,q}(M)\hookrightarrow L^{\infty}(M)$,
		\item let $s\ge 0$ and $q\in (1,\infty).$ Suppose $q\in [2,\frac{2}{(1-s)_+})$ or $q=\frac{2}{1-s}$ if $s<1$.
		Then, the embedding $H^{s}(M)\hookrightarrow L^{q}(M)$ is continuous.
		\newline
		If $0<s\le 1$
		as well as $q\in [1,\frac{2}{(1-s)_+})$, the embedding $H^{s}(M)\hookrightarrow L^{q}(M)$ is compact.
		\item For $s,s_0,s_1\ge 0$ and $p,p_0,p_1\in(1,\infty)$ and $\theta\in(0,1)$ with
		\begin{align*}
		s=(1-\theta)s_0+\theta s_1,\qquad \frac{1}{p}=\frac{1-\theta}{p_0}+\frac{\theta}{p_1},
		\end{align*}
		we have
		$\left[H^{s_0,p_0}(M),H^{s_1,p_1}(M)\right]_\theta=H^{s,p}(M)$.
\end{enumerate}
\end{proposition}
\begin{proof}
Statement (i) follows from \cite[Chapter 7]{Triebel_func_spacII} and the results of \cite{Strichartz}. For the other statements we refer to \cite[Proposition B.2]{Brz+H+W-2019}.
\end{proof}

\begin{remark}
\begin{enumerate}[label=\roman{*}), ref=(\roman{*})]
\item It is known, see \cite{Triebel_func_spacII}, that for $k \in \mathbb{N}_0$ and $q\in[1,\infty),$
$H^{k,q}(M)=W^{k,q}(M)$ , where $W^{k,q}(M)$ is the classical Sobolev space defined via covariant derivatives.
\item For $q=2,$ we write $H^s(M):=H^{s,2}(M)$.
\end{enumerate}
\end{remark}

\subsection{Strichartz estimates}
\label{Str_est_sec}

In this Section we derive the Strichartz estimates that we need for the proof of uniqueness in Section \ref{sec-UniquenessSection}.

Throughout this Section the operator $A$ can be either the Laplace-Beltrami operator $-\Delta_g$ on a two-dimensional
compact Riemannian manifold $(M,g)$ without boundary, equipped with a Lipschitz metric $g$, or the  negative Laplace
operator with Dirichlet or Neumann boundary conditions on a smooth relatively compact  domain
 $\mathscr{O} \subset \mathbb{R}^2$.

By $A_q$ we mean the realization of the above mentioned operators on the  $L^q$ space, see Sections \ref{LDNappendix}
and \ref{LBappendix}. As usual, if not specified, by $L^q$ we mean either $L^q(M)$ or  $L^q(\mathscr{O})$ and, for
simplicity we write $A$ instead of $A_2$.

When the operator $A$ is of the type described above, for every $s \ge 0$ and $q \in (1,\infty)$, $(Id+A_q)^{-s}$ defines an
isomorphism from $L^q$ to $\D(A^{s}_{q})$ and it holds that
\begin{equation}
\label{equivalence_norm_A}
\norm{f}_{\D(A^s_q)} \simeq \norm{v}_{L^q}, \quad \text{ for}  	\ f=(I+A_q)^{-s} v.
\end{equation}
In the next Lemma we recall the deterministic homogeneous Strichartz estimate from a recent paper by Blair, Smith and Sogge, see \cite[Theorem 1.1]{Blair+Sogge_2008}, stated here in the form more suitable for our needs.

\begin{lemma}\label{thm-BSS2008}
Let $-A$ be either the Laplace-Beltrami operator on a two-dimensional compact Riemannian manifold $(M,g)$ without
boundary equipped with
a Lipschitz metric $\mathrm{g}$, or the realization of the negative Laplace operator with Dirichlet or Neumann boundary
conditions on a smooth relatively compact  domain $\mathscr{O} \subset \mathbb{R}^2$.
Assume that $(p,q)$ is a Strichartz pair of real numbers, i.e.
$2 \le p,q \le \infty$ and
\begin{align}\label{eqn-Strichartz pair}
\frac{2}{p} + \frac{2}{q} = 1, & & (p,q)\ne(2,\infty).
\end{align}
Then  the following Strichartz estimate holds for every $x \in \D(A^{\frac{2}{3p}})$
\begin{equation}\label{eqn-Strichartz}
\|e^{-i\cdot A}x \|_{L^p([0,T]; L^q)} \lesssim_T
\|x\|_{\D(A^{\frac{2}{3p}})}
\end{equation}
\end{lemma}
Let us notice that when $p=\infty$, then $q=2$ and the inequality  \eqref{eqn-Strichartz} becomes the classical one
\begin{equation}\label{eqn-classical}
\|e^{-i\cdot A}x\|_{L^\infty([0,T]; L^2)} \lesssim_T\|x\|_{L^2}.
\end{equation}

\begin{remark}\label{rem-Strichartz comparison}
In the case where $(M,\mathrm{g})$ is a  boundaryless manifold with $\mathrm{g} \in
C^\infty$, the estimate \eqref{eqn-Strichartz} holds with $s=\frac{1}{2p}$ instead of $s=\frac{2}{3p}$, see  the paper
\cite{Burq+G+T_2004} by Burq, G\'{e}rard and Tzvetkov.  In particular,  the Strichartz estimates for  a  boundaryless manifold  are stronger
than the Strichartz estimates for a bounded domain with smooth boundary.
\end{remark}

From Lemma \ref{thm-BSS2008} we can deduce the following Strichartz estimates for the deterministic and stochastic
convolutions.
\begin{lemma}
\label{Strichartz_lemma}
Assume that $T>0$.
In the situation of Lemma \ref{thm-BSS2008}, we take $\vartheta  \in \left[\frac{4}{3p},1\right]$ and $r \in (1,\infty)$.
\begin{itemize}
\item[i)] We have the homogeneous Strichartz estimate
\begin{equation}\label{eqn-Strichartz-211}
\|e^{-i \cdot A}x\|_{L^p(0,T; \D(A_q^{\frac{\vartheta }2-\frac{2}{3p}}))} \lesssim_T
\|x\|_{\D(A^{\frac{\vartheta }2})}, \qquad \text{for $x\in \D(A^{\frac{\vartheta }2})$},
\end{equation}
 and the inhomogeneous Strichartz estimate
\begin{equation}
\label{eqn-Strichartz-212}
\left \Vert\int_0^{\cdot} e^{-i(\cdot- \tau)A}f(\tau)\, {\rm d}\tau\right\Vert_{L^p(0,T; \D(A_q^{\frac{\vartheta }2-\frac{2}{3p}}))} \lesssim_T \|f\|_{L^1(0,T,\D(A^{\frac{\vartheta }2}))},
\end{equation}
for $f\in L^1(0,T;\D(A^{\frac\vartheta 2}))$.
\item[ii)] Let $(\Omega,\F,\Prob)$ be a probability space, $Y$ be a separable real Hilbert space, $W$ a $Y$-canonical cylindrical Wiener processes adapted to a filtration $\Filtration$ satisfying the usual conditions. We have the stochastic Strichartz estimate
\begin{equation}
\label{eqn-Strichartz-213}
\left \Vert\int_0^{\cdot} e^{-i(\cdot- \tau)A}\xi(\tau)\, {\rm dW }(\tau)\right\Vert_{L^r(\Omega,L^p(0,T; \D(A_q^{\frac{\vartheta }2-\frac{2}{3p}})))}
\lesssim_T \|\xi\|_{L^r(\Omega;L^2(0,T;\gamma(Y,\D(A^{\frac{\vartheta }{2}}))))}
\end{equation}
for all adapted processes $\xi \in L^r(\Omega;L^2(0,T;\gamma(Y,\D(A^{\frac{\vartheta }{2}}))))$.
\end{itemize}
\end{lemma}
\begin{proof}
\begin{itemize}
\item [i)]
Estimate \eqref{eqn-Strichartz-211} follows from \eqref{equivalence_norm_A} and \eqref{eqn-Strichartz} that yield
\begin{align}
\label{Str1}
\|e^{-i\cdot A}x\|_{L^p(0,T; \D(A_q^{\frac{\vartheta }2-\frac{2}{3p}}))}
&\simeq
\|(Id+A_q)^{\frac{\vartheta }2-\frac{2}{3p}}e^{-i\cdot A}x\|_{L^p(0,T;L^q)}
\notag\\
&\simeq
\|e^{-i\cdot A}(Id + A_q)^{\frac{\vartheta }2-\frac{2}{3p}}x\|_{L^p(0,T;L^q)}
\notag\\
&\le
\|(Id+A_q)^{\frac{\vartheta }2-\frac{2}{3p}}x\|_{\D(A^{\frac{2}{3p}})}
\simeq \|x\|_{\D(A^{\frac{\vartheta }{2}})}.
\end{align}
\\
The proof of estimates \eqref{eqn-Strichartz-212} follows the lines of the proof of \cite[Corollary 2.1]{Burq+G+T_2004}, see also the proof of \cite[Lemma 3.2]{BrzezniakStrichartz}.
The l.h.s. in \eqref{eqn-Strichartz-212} reads
\begin{align*}
I&:= \left \Vert \int_0^TF_{\tau}\, {\rm d}\tau\right\Vert_{L^p(0,T; \D(A_q^{\frac{\vartheta }2-\frac{2}{3p}}))} ,
\\
 F_{\tau}(t)&:={\pmb 1}_{[\tau,T]}(t)e^{-i(t-\tau)A}f(\tau), \; t\in [0,T].
\end{align*}
Let us observe that by estimate \eqref{eqn-Strichartz-211} (with $C_T$ being the constant)
\begin{align*}
\|F_\tau\|^p_{L^p(0,T; \D(A_q^{\frac{\vartheta }2-\frac{2}{3p}}))}&= \int_0^T  \|F_\tau(t)\|^p_{ \D(A_q^{\frac{\vartheta }2-\frac{2}{3p}})} \, {\rm d} t
\\
=\int_0^T  \|{\pmb 1}_{[\tau,T]}(t)e^{-i(t-\tau)A}f(\tau) \|^p_{ \D(A_q^{\frac{\vartheta }2-\frac{2}{3p}})} \, {\rm d} t&=
\int_{\tau}^T  \|e^{-i(t-\tau)A}f(\tau) \|^p_{ \D(A_q^{\frac{\vartheta }2-\frac{2}{3p}})} \, {\rm d} t\\
=\int_0^{T-\tau}  \|e^{-is A}f(\tau) \|^p_{ \D(A_q^{\frac{\vartheta }2-\frac{2}{3p}})} \, {\rm d} s &\leq
\int_0^{T}  \|e^{-is A}f(\tau) \|^p_{ \D(A_q^{\frac{\vartheta }2-\frac{2}{3p}})} \, {\rm d} s\\
&\leq C_T^p  \|f(\tau) \|^p_{ \D(A^{\frac{\vartheta }2})}.
\end{align*}
Therefore the  Minkowski inequality
yield
\begin{align*}
I
&\le \int_0^T \|F_\tau\|_{L^p(0,T; \D(A_q^{\frac{\vartheta }2-\frac{2}{3p}}))}\, {\rm d}\tau
\\
&
\leq C_T \int_0^T \|f(\tau)\|_{\D(A^{\frac{\vartheta }{2}})}\, {\rm d}\tau
= C_T\|f\|_{L^1(0,T,\D(A^{\frac{\vartheta }2}))}.
\end{align*}
\item [ii)]
When $r=p$ \cite[Theorem 3.10]{BrzezniakStrichartz} and \eqref{eqn-Strichartz} yield
\begin{equation*}
\left \Vert\int_0^{\cdot} e^{-i(\cdot- \tau)A}\xi(\tau)\, {\rm dW }(\tau)\right\Vert_{L^r(\Omega,L^p(0,T; L^q))} \lesssim_T \|\xi\|_{L^r(\Omega;L^2(0,T;\gamma(Y,\D(A^{\frac{2}{3p}}))))},
\end{equation*}
and reasoning as in \eqref{Str1} one obtains \eqref{eqn-Strichartz-213} with $r=p$.
For the case $r \ne p$ the result follows from \cite[Corollary 2.2]{Hornung-F_PhD}: our parameter $\theta$ is the same parameter $\theta$ that appears in that result, the parameter $\mu$ that appears there is equal to $\frac {4}{3p}$ in our case.
\end{itemize}
\end{proof}

\section{Proof of Proposition \ref{EstimatesGalerkinSolution} (c)}
\label{App_B}

\begin{proof}[Proof of Proposition \ref{EstimatesGalerkinSolution} (c)]
This proof has some similarities with that of Proposition \ref{EstimatesGalerkinSolution} (a).
However, here we look for a uniform estimate on the unbounded time interval $[0,+\infty)$.

We use the auxiliary process $Z(u)$ defined in \eqref{def-z-somma}:
\[
Z(u)= \|u\|^2_H + 2\energy(u)\equiv \|u\|^2_V+2\Fhat(u).
\]
We will prove that
\begin{equation}
\label{Est_y}
\sup_{t\ge 0}\mathbb{E}\left[Z(u_n(t))\right]< \infty,
\end{equation}
from which, estimate \eqref{Ebound} immediately follows.
In order to prove \eqref{Est_y} let us deal separately with the quantities $\mathbb{E}\left[\|u_n\|^2_H\right]$ and $\mathbb{E}\left[\energy(u_n)\right]$.
\newline
Applying the It\^o formula to the squared $H$-norm of $u_n$ (compare with the computations done in the proof of Proposition \ref{MassEstimateGalerkinSolution})	we obtain, almost surely for all $t\ge0$
 \begin{align*}
	\norm{u_n(t)}_{H}^2=&\norm{ P_nu_0}_{H}^2-2 \beta\int_0^{t}  \|u_n(s)\|_H^2
 \df s
 +\int_0^{t}
	\Vert  G\bigl(u_n(s)\bigr) \Vert_{\gamma(Y_2,H)}^2\df  s
+N_n(t),
	\end{align*}
where
$N_n(t)=2 \int_0^{t} \Real \skpH{u_n(s)}{ -\im G( u_n(s))  \df \textbf{W}(s)}$
is a martingale.
Taking the expected values on both sides we obtain \begin{align*}
\mathbb{E}\left[\norm{u_n(t)}_{H}^2 \right]
=\mathbb{E}[\norm{P_n u_0}_{H}^2]-2\beta \int_0^{t} \mathbb{E}\left[ \|u_n(s)\|_H^2\right]
 \df s
 + \int_0^{t}
	\mathbb{E} \left[\Vert  G\bigl(u_n(s)\bigr) \Vert_{\gamma(Y_2,H)}^2\right]\df  s ;
 \end{align*}	
 we write the above equation in the differential form
 \begin{equation*}
 \frac{{\rm d}}{{\rm d} t}\mathbb{E}\left[\norm{u_n(t)}_{H}^2 \right]
 =-2\beta \mathbb{E}\left[\norm{u_n(t)}_{H}^2 \right] + \mathbb{E} \left[\Vert  G\bigl(u_n(t)\bigr) \Vert_{\gamma(Y_2,H)}^2\right].
    \end{equation*}
 From Assumption \ref{assumption-stochastic} (iii) we infer
  \begin{equation}
  \label{Gronwall1}
  \frac{{\rm d}}{{\rm d} t}\mathbb{E}\left[\norm{u_n(t)}_{H}^2 \right]
  \le
  2C_1^2+2\left(\tilde{C}_1^2-\beta \right)\mathbb{E}\left[\norm{u_n(t)}_{H}^2 \right].
  \end{equation}
\newline
We now apply the It\^o formula to the energy functional $\energy$
(compare also with computations done in the proof of in Proposition \ref{EstimatesGalerkinSolution})
and obtain that, almost surely for all $t\ge 0$,
 \[\begin{split}
	\energy\left(u_n(t)\right)=&
		\energy\left(P_n u_0\right)	
	+\int_0^t \Real \duality{A u_n(s)+F(u_n(s))}{ b(u_n(s))-\beta u_n(s)} \df s+ M(t)
	  \\
	&+\frac{1}{2} \int_0^t  \Vert \sqrtA B u_n(s)\Vert_{\gamma(Y_1,H)}^2\df s
	+\frac{1}{2} \int_0^t  \Vert \sqrtA G  (u_n(s))\Vert^2_{\gamma{(Y_2,H)}}\df s
	\\
	&+\frac{1}{2}\int_0^t \sumM \Real \duality{F^{\prime}[u_n(s)] \left(B ( u_n(s))f_m\right)}{ B ( u_n(s))f_m} \df s
	\\
	&+\frac{1}{2}\int_0^t \sumM \Real \duality{F^{\prime}[u_n(s)] \left(G(  u_n(s))e_m\right)}{G( u_n(s))e_m} \df s,
	\end{split}\]	
where
\begin{align*}
M(t)=\int_0^t \Real \duality{A u_n(s)+F(u_n(s))}{& -\im  B \left( u_n(s)\right)\df W(s)}
\\
&	+\int_0^t \Real \duality{A u_n(s)+F(u_n(s))}{ -\im  G \left(u_n(s)\right)\df \newW(s)}
\end{align*}
is the sum of two martingales.
As above, we take the expected value on both sides of the above equality and we write the equation in its differential form as
\[\begin{split}
	\frac{{\rm d}}{{\rm d}t}\mathbb{E}[2\energy\left(u_n(t)\right)]
=&	
	2\mathbb{E}\left[\Real \duality{A u_n(t)+F(u_n(t))}{ b(u_n(t))-\beta u_n(t)}\right]
	  \\
	&+ \mathbb{E}\left[\Vert \sqrtA B u_n(t)\Vert_{\gamma(Y_1,H)}^2\right]	+ \mathbb{E}\left[ \Vert \sqrtA G  u_n(t)\Vert^2_{\gamma{(Y_2,H)}}\right]
	\\
	&+\mathbb{E} \left[\sumM \Real \duality{F^{\prime}[u_n(t)] \left(B ( u_n(t))f_m\right)}{ B ( u_n(t))f_m} \right]
	\\
	&+\mathbb{E} \left[\sumM \Real \duality{F^{\prime}[u_n(t)] \left(G(u_n(t))e_m\right)}{G(u_n(t))e_m} \right].
	\end{split}\]	
	
We now estimate the RHS of the above equality.
Recalling \eqref{Strat_cor} we have
\begin{equation}\begin{split}
\label{a1}
 2\mathbb{E}\left[\Real \duality{A u_n}{ b(u_n)}\right]
&\le
\mathbb{E} \left[\|A^{\frac 12}u_n\|_H \|B\|^2_{\mathscr{L}(V,\gamma(Y_1,V))}\|u_n\|_V\right]
\\
& \le
\|B\|^2_{\mathscr{L}(V,\gamma(Y_1,V))}\mathbb{E}\left[\|u_n\|^2_V\right]
\end{split}\end{equation}
and, thanks to \eqref{eqn_nonlinearityEstimate} and \eqref{eqn_boundantiderivative}, we obtain
\begin{equation}\begin{split}
\label{a2}
 2\mathbb{E}\left[\Real \duality{F(u_n)}{ b(u_n)}\right]
&\le
\mathbb{E} \left[ \|F(u_n)\|_{L^{\frac{\alpha+1}{\alpha}}}\sum_{m=1}^{\infty}\|B^2_m u_n\|_{L^{\alpha+1}}\right]
\\
&\le \|B\|^2_{\mathscr{L}(L^{\alpha+1},\gamma(Y_1,L^{\alpha+1}))}\mathbb{E}\left[\|u_n\|^{\alpha+1}_{L^{\alpha+1}}\right]
\\
&=(\alpha+1) \|B\|^2_{\mathscr{L}(L^{\alpha+1},\gamma(Y_1,L^{\alpha+1}))}\mathbb{E} \left[\hat F(u_n)\right].
\end{split}\end{equation}
We exploit \eqref{ineq-dissipativity} and \eqref{eqn_boundantiderivative} to get
\begin{equation}\begin{split}
\label{a3}
2\mathbb{E}\left[\Real \duality{A u_n+F(u_n)}{-\beta u_n}\right]
&= -2\beta\mathbb{E}\left[ \|A^{\frac 12 } u_n\|_H^2\right] -2\beta  \mathbb{E}\left[\|u_n\|^{\alpha+1}_{L^{\alpha+1}}\right]
\\
&= -2\beta \mathbb{E}\left[\|A^{\frac 12 } u_n\|_H^2\right] -2\beta (\alpha+1) \mathbb{E} [ \hat{F}(u_n) ].
\end{split}\end{equation}
We have
\begin{equation}
\label{a4}
   \mathbb{E}\left[\Vert \sqrtA B u_n\Vert_{\gamma(Y_1,H)}^2\right]	
     \le \|B\|^2_{\mathscr{L}(V,\gamma(Y_1,V))}\mathbb{E}\left[\|u_n\|^2_V\right]
\end{equation}
and, from \eqref{crescitaGEA},
\begin{equation}
\label{a5}
   \mathbb{E}\left[\Vert \sqrtA G (u_n)\Vert_{\gamma(Y_2,H)}^2\right]	
\le2(
C^2_2+\tilde{C}_2^2\mathbb{E}\left[\|u_n\|^2_V\right]).
\end{equation}
From \eqref{eqn_deriveNonlinearBound}, \eqref{eqn_boundantiderivative}  and Remark \ref{rem-HS} we obtain
\begin{multline}
\label{a6}
   \mathbb{E} \left[\sumM \Real \duality{F^{\prime}[u_n] \left((B  u_n)f_m\right)}{ (B  u_n)f_m} \right]
   \le
 \mathbb{E}\left[\|F'[u_n]\|_{L^{\alpha+1}\rightarrow L^{\frac{\alpha+1}{\alpha}}}\|B(u_n)\|^2_{\gamma(Y_1,L^{\alpha+1})}\right]
\\
\le \alpha\|B\|^2_{\mathscr{L}(L^{\alpha+1},\gamma(Y_1,L^{\alpha+1}))}\mathbb{E}\left[ \|u_n\|^{\alpha+1}_{L^{\alpha+1}}\right]
= \alpha(\alpha+1)\|B\|^2_{\mathscr{L}(L^{\alpha+1},\gamma(Y_1,L^{\alpha+1}))}\mathbb{E}\left[ \hat{F}(u_n)\right].
\end{multline}
Finally, from \eqref{eqn_deriveNonlinearBound}, \eqref{eqn_boundantiderivative}, \eqref{ineq-Fp} and \eqref{crescitaGL}, we obtain
\begin{equation}\begin{split}
\label{a7}
 \mathbb{E}   \sumM \Real  &\duality{F^{\prime}[u_n] \left(G(u_n)e_m\right)}{G(u_n)e_m}
    \le  \mathbb{E}\left[\|F'[u_n]\|_{L^{\alpha+1}\rightarrow L^{\frac{\alpha+1}{\alpha}}} \|G(u_n)\|^2_{\gamma(Y_1,L^{\alpha+1})}\right]
\\&
\le\alpha\mathbb{E}\left[\|u_n\|^{\alpha-1}_{L^{\alpha +1}} 2\left(C_3^2+ \tilde{C}_3^2\|u_n\|^2_{L^{\alpha+1}} \right)\right]
\\&
= 2 \alpha C_3^2\left(\alpha+1 \right)^{\frac{\alpha-1}{\alpha+1}}\mathbb{E}\left[\left(\hat{F}(u_n) \right)^{\frac{\alpha-1}{\alpha+1}}\right] +  2 \alpha \tilde{C}^2_3 (\alpha+1) \mathbb{E}\left[\hat{F}(u_n)\right]
\\&
\le
\frac 2{\alpha+1} \left(\frac{\alpha-1}{\varepsilon(\alpha+1)}\right)^{\frac {\alpha-1}2}
(2 \alpha C_3^2)^{\frac{\alpha+1}2}
+ \left(\varepsilon + 2\tilde{C}^2_3 \alpha\right)(\alpha +1) \mathbb{E}\left[\hat{F}(u_n)\right],
\end{split}\end{equation}
where in the last  estimate we exploited the Young inequality
\[
2\alpha C_3^2\left((\alpha+1) \hat{F}(u_n) \right)^{\frac{\alpha-1}{\alpha+1}}
\le
\varepsilon  (\alpha+1) \hat{F}(u_n) + \frac 2{\alpha+1} \left(\frac{\alpha-1}{\varepsilon(\alpha+1)}\right)^{\frac {\alpha-1}2}
(2 \alpha C_3^2)^{\frac{\alpha+1}2},
\]
for any $\varepsilon>0$.

Collecting  estimates \eqref{a1}-\eqref{a7} we get
\begin{multline}
\label{Gronwall2}
     \frac{{\rm d}}{{\rm d}t}\mathbb{E}[2\energy(u_n(t))]
	\le 2C_2^2+ \frac 2{\alpha+1} \left(\frac{\alpha-1}{\varepsilon(\alpha+1)}\right)^{\frac {\alpha-1}2}(2 \alpha C_3^2)^{\frac{\alpha+1}2}
	-2\beta \|A^{\frac 12 } u_n(t)\|_H^2
\\	+2\left(\tilde{C}_2^2+\|B\|^2_{\mathscr{L}(V,\gamma(Y_1,V))}\right)\mathbb{E}\left[ \|u_n(t)\|^2_V\right]
	\\+
(\alpha+1)\left((\alpha+1)\|B\|^2_{\mathscr{L}(L^{\alpha+1},\gamma(Y_1,L^{\alpha+1}))}+  \varepsilon + 2\alpha\tilde{C}^2_3   -2\beta \right)\mathbb{E}\left[ \hat{F}(u_n(t))\right].
\end{multline}
Recalling the definition of $Z(u_n)$, we now take the sum in both sides of inequalities \eqref{Gronwall1} and \eqref{Gronwall2}
\begin{multline}
\label{Gronwall3}
\frac{\rm d}{{\rm d}t}\mathbb{E}[Z(u_n(t))]
 \le
 2 C_1^2+ 2 C_2^2+ \frac 2{\alpha+1} \left(\frac{\alpha-1}{\varepsilon(\alpha+1)}\right)^{\frac {\alpha-1}2}(2 \alpha C_3^2)^{\frac{\alpha+1}2}
\\+	2\left(\tilde{C}_1^2-\beta
	+\tilde{C}_2^2+\|B\|^2_{\mathscr{L}(V,\gamma(Y_1,V))}\right)\mathbb{E}\left[ \|u_n(t)\|^2_V\right]
	\\+
(\alpha+1)\left((\alpha+1)\|B\|^2_{\mathscr{L}(L^{\alpha+1},\gamma(Y_1,L^{\alpha+1}))}+  \varepsilon + 2\alpha \tilde{C}^2_3   -2\beta \right)\mathbb{E}\left[ \hat{F}(u_n(t))\right].\end{multline}
Now we assume \eqref{condizione-beta}; then for a suitable choice of $\varepsilon$
both coefficients in front of the norms are negative.
Therefore there exist two positive constants $C_4$ and $C_5$ independent of $n$ such that
\begin{equation*}
 \frac{\rm d}{{\rm d}t}\mathbb{E}[Z(u_n(t))]\le C_4- C_5\mathbb{E}[Z(u_n(t))] .
\end{equation*}
From Gronwall's inequality we infer
\begin{equation*}
 \mathbb{E}[Z(u_n(t))] \le \mathbb{E}[Z(u_n(0))]e^{-C_5 t}+\frac{C_4}{C_5}\left(1-e^{-C_5 t}\right),
\end{equation*}
so
\begin{equation*}
\sup_{n \in \mathbb{N}}\sup_{t\ge 0}\mathbb{E}\left[Z(u_n(t)) \right] <\infty. \qedhere
\end{equation*}
\end{proof}

\section{Proof of Lemma \ref{convquadvari}}
\label{App_C}

\begin{proof}[Proof of Lemma \ref{convquadvari}]
Let $0\le s\le t< \infty$ and $\psi, \varphi \in \EA$. We define
	\begin{align*}
	g_{n}(t,s)&:=h(v_n|_{[0,s]}) \sumM \int_s^t  \skpHReal{S_n G( S_n v_n(r))e_m}{\psi} \skpHReal{S_n G( S_n v_n(r))e_m}{\varphi}\df r, \\
	g(t,s)&:=h(v|_{[0,s]}) \sumM \int_s^t  \dualityReal{G( v(r))e_m}{\psi} \dualityReal{G( v(r))e_m}{\varphi}\df r .
	\end{align*}
We will prove that the functions $\{g_{n}\}_{n \in \mathbb{N}}$ are uniformly integrable and converge
$\hat{\mathbb{P}}$-a.s. to $g$.
\\
$\bullet$ \textbf{$\hat{\mathbb{P}}$-a.s. convergence.}
  Because of $h(v_n |_{[0,s]}) \to h(v|_{[0,s]})$ $\hat\Prob$-a.s. and the continuity of the inner product
  $L^2([s,t]\times \N),$ the convergence
	\begin{align*}
	\skpHReal{S_n G (S_n v_n)e_m}{\psi} \to \dualityReal{G (v)e_m}{\psi}
	\end{align*}
$\hat\Prob$-a.s. in $L^2([s,t]\times \N)$ already implies
$g_n(t,s)\to g(t,s)$ $\hat\Prob$-a.s. Therefore, it is sufficient to prove
	\begin{equation*}
	\lim_{n\rightarrow \infty}\Vert\skpHReal{S_n G (S_n v_n)e_\cdot}{\psi} - \dualityReal{G (v)e_\cdot}{\psi}\Vert_\LzweiTimeSum=0 \qquad \ \hat{\mathbb{P}}-a.s.
	\end{equation*}
We estimate			
	\begin{align*}
	\Vert&\skpHReal{S_n G (S_n v_n)e_\cdot}{\psi} - \dualityReal{G (v)e_\cdot}{\psi}\Vert_\LzweiTimeSum
	\\
	&\le \norm{\skpHReal{G( S_n v_n)e_\cdot}{\left(S_n-I\right)\psi} }_\LzweiTimeSum
	+
	\norm{\skpHReal{ G(S_nv_n)e_\cdot-G(v_n)e_\cdot}{\psi} }_\LzweiTimeSum \\
	&\hspace{1cm}+\norm{ \dualityReal{G (v_n)e_\cdot-G(v)e_\cdot}{\psi}}_\LzweiTimeSum
	\\
	&=:I_1(n)+I_2(n)+I_3(n).
	\end{align*}	
We work pathwise. By means of \eqref{eqn-G crescita} and \eqref{SnUniformlyBounded} we estimate
	\[\begin{split}
	\norm{I_1(n)}_\LzweiTimeSum
	&=\norm{\skpHReal{G( S_n v_n)e_\cdot}{\left(S_n-I\right)\psi} }_\LzweiTimeSum
	\\
	&\le \left(\int_s^t\|GS_nv_n(r)\|^2_{\gamma(Y_2,H)}\, {\rm d}r \right)^{1/2} \|(S_n-I)\psi\|_{H}
	\\
	&\lesssim \left(1+\sup_{n \in \mathbb{N}}\|v_n\|^2_{L^{\infty}(0,T;H)}\right)^{1/2} \|(S_n-I)\psi\|_{\EA}.
				\end{split}\]
Bearing in mind, from Proposition
\ref{MassEstimateGalerkinSolution}, the boundedness of the sequence $(v_n)_n$ in
$L^{\infty}(0,T;H)$, for any $T>0$, we get  the  convergence to zero as $n \rightarrow \infty$ as a consequence of Proposition
\ref{PaleyLittlewoodLemma}.
\newline
Using \eqref{Lipschitz_G} we estimate

\begin{align*}
\norm{I_2(n)}_\LzweiTimeSum
	&=
\norm{\skpHReal{ G(S_nv_n)e_\cdot-G(v_n)e_\cdot}{\psi} }_\LzweiTimeSum	
\\
& \le \left(\int_s^t \sum_{m=1}^{\infty}\|G(S_nv_n(r))e_m-G(v_n(r))e_m\|^2_H\, {\rm d}r\right)^{\frac 12}\|\psi\|_H
\\
&= \left(\int_s^t \|G(S_nv_n(r))-G(v_n(r))\|^2_{\gamma(Y_2,H)}\, {\rm d}r\right)^{\frac 12}\|\psi\|_H
\\
&\le L_G\left(\int_s^t \|(S_n-I)v_n(r)\|_H^2\,{\rm d}r \right)^{\frac 12}\|\psi\|_H
\\
& \lesssim \|\psi\|_V \|S_n-I\|_{\mathscr{L}(V)} \sup_{n \in \mathbb{N}}\|v_n\|_{L^{\infty}(0,T,V)}.
\end{align*}
Recalling Corollary \ref{cor-aprioriEA}, about the boundedness of the sequence $(v_n)_n$ in $L^{\infty}(0,T;V)$, the
convergence to zero, as $n \rightarrow \infty$, follows again from Proposition \ref{PaleyLittlewoodLemma}.
\newline
The convergence to zero, as $n \rightarrow \infty$, of the last term
\begin{align*}
\norm{I_3(n)}_\LzweiTimeSum
	&=\norm{ \dualityReal{G (v_n)e_\cdot-G(v)e_\cdot}{\psi}}_\LzweiTimeSum
\end{align*}
follows as a consequence of the continuity of the norm $L^2([s,t] \times \mathbb{N})$, Assumption \eqref{cont_V^*} and \eqref{convergencev_n}.

$\bullet$	\textbf{Uniform integrability.} It is sufficient to show that, for some $r>1$,
\begin{equation*}
\sup_{n \ge 1}\hat{\mathbb{E}} \left[|g_{n}(t,s)|^r\right]< \infty, \ \qquad 0 \le s\le t\le T.
\end{equation*}
Let $r>1$, we estimate
	\begin{align}
	\label{g3}
	\hat{\mathbb{E}} \left[\vert g_{n}(t,s)\vert^r \right]
	&\le \hat{\mathbb{E}} \Big[\norm{\dualityReal{S_n G( S_n v_n)e_\cdot}{\psi}}_{L^2([s,t]\times \N)} ^r
	\norm{\dualityReal{S_n G( S_n v_n)e_\cdot}{\varphi}}_{L^2([s,t]\times \N)}^r \vert h(v_n|_{[0,s]})\vert^r\Big]
	\notag\\
	&\le \hat{\mathbb{E}} \left[\Bigg(\int_s^t\|G(S_nv_n(r))\|^2_{\gamma(Y_2,H)}\,{\rm d}r\Bigg)^{\frac r 2}\right] \norm{\psi}_\EA^r  \norm{\varphi}_\EA^r  \norm{h}_\infty^r
	\notag\\
	&\lesssim  \left(1+\sup_{n\ge 1} \hat{\mathbb{E}}\|v_n\|^r_{L^{\infty}(0,T;H)}\right)\norm{\psi}_\EA^r  \norm{\varphi}_\EA^r  \norm{h}_\infty^r,
 		\end{align}
which is finite thanks to Proposition \ref{MassEstimateGalerkinSolution}.
	
Using Vitali's Theorem, we finally obtain
	\begin{align*}
	\lim_{n\to \infty} \hat{\mathbb{E}} \left[g_{n}(t,s)\right]=\hat{\mathbb{E}} \left[g(t,s)\right],\qquad 0\le s\le t\le T,
	\end{align*}
which concludes the proof.	
\end{proof}

\section{Yamada-Watanabe Theorem for Stochastic Evolution Equations}
\label{sec-Yamada-Watanabe Theorem}

The infinite dimensional version of the
Yamada-Watanabe Theorem has a long history. As far as we are aware, the first time Yamada-Watanabe Theorem  was  mentioned in the infinite-dimensional setting
of Stochastic Evolution Equations (SEEs) was a paper by the first named author and G\c{a}tarek in \cite{Brz+Gat_1999} about  stochastic reaction diffusion equations.
In that paper  the classical version
of the  Yamada-Watanabe Theorem from \cite{Ikeda+Watanabe_1989} has been used but no details were provided.
A proper  formulation for mild solutions to SEEs  and a  detailed proof  have  been first given  by Ondrej\'at in \cite{Ondrejat_2004_Uniqueness}. Later on  Kunze \cite{Kunze_2013_Yamada} formulated and proved a similar result in a framework of  weak solutions to SEEs. Let us point out, see also section 4 of \cite{Brz+H+W-2019}, that in case when the pathwise uniqueness holds,
another avenue of proving the existence of strong solutions, not by the Prokhorov-Skorokhod Theorems, is possible. Namely, one can
 use the   Gy\"ongy and Krylov Lemma, see \cite[Lemma 1]{Gyongy+Krylov_1996}, to prove that  the approximations converge in probability and that the limit process is a strong solution.
This approach  has been recently used  by Crisan, Flandoli and  Holm \cite{Crisan+Flandoli+Holm_2019} but  it still  required the use of the Skorokhod
embedding theorem. As it was observed in \cite{Brz+H+W-2022}, it would be of interest to see if  this approach  works for the class of stochastic NLS studied in the present paper.

Returning to the topic of an infinite dimensional version of the Yamada-Watanabe Theorem let us emphasize that the present formulation differs from the formulations
from \cite[Theorem 2]{Ondrejat_2004_Uniqueness} and \cite[Theorem 5.3 and Corollary 5.4]{Kunze_2013_Yamada} since we consider only solutions with a given initial law.

The pathwise uniqueness and the existence of martingale solutions
imply the existence of  strong solutions, see e.g. \cite[Theorem 2]{Ondrejat_2004_Uniqueness} and \cite[Theorem 5.3 and Corollary 5.4]{Kunze_2013_Yamada}.

\begin{theorem}\label{thm-Yamada-Watanabe}
Assume that Assumptions \ref{assumption-A-space}, \ref{assumption-F_def} and \ref{assumption-stochastic} are satisfied.
Assume that ${\newr}\in [1,\infty)$ and that $\mu$ is a Borel probability measure on $\EA$ whose $2{\newr}$-th moment   is  finite. If
\begin{itemize}
\item [i)] there exists a \emph{martingale solutions} to equation \eqref{eqn-ProblemStratonovich} with the initial data $\mu$.\\
\item [ii)]
pathwise-uniqueness of solutions to  equation \eqref{eqn-ProblemStratonovich}
 holds, i.e. if two systems
\begin{equation}\label{eqn-mart sol-YW}
\bigl(\tilde{\Omega},\tilde{\F},\tilde{\Prob},\tilde{W},\tilde{\newW},\tilde{\Filtration},u_1\bigr)
\mbox{ and } \bigl(\tilde{\Omega},\tilde{\F},\tilde{\Prob},\tilde{W},\tilde{\newW},\tilde{\Filtration},u_2\bigr)
\end{equation}
are  \emph{martingale solutions} of the equation \eqref{eqn-ProblemStratonovich} with the initial data $\mu$, i.e. such that
\begin{equation}\label{eqn-initial law-YW}
\law{\mathbb{\tilde{P}}}{u_i(0)}=\mu \  \text{on}\ \mathscr{B}(V), \quad i=1,2,
\end{equation}
then
\[
\tilde{\Prob}\bigl(u_1(t)=u_2(t) \bigr)=1 \;\;\mbox{ for all } t\geq 0,
\]
\end{itemize}
then there exists a strong solution to   equation \eqref{eqn-ProblemStratonovich} with the initial data $\mu$.
\end{theorem}

\newcommand{\stetigBall}{C([0,T],\mathbb{B}_{\EA}^r)}

\newcommand{\strongBall}{C([0,\infty),\mathbb{B}_{\EA}^r)}

\newcommand{\LinftylocEA}{{L^\infty_{\mathrm{loc}}([0,\infty);\EA)}}

\newcommand\LinfEA[1]{{L^\infty(0,#1;\EA)}}
\newcommand\strongball[2]{C([0,#1],\mathbb{B}_{\EA}^{#2})}

\newcommand\strongEAdual[1]{{C([0,#1];\EAdual)}}

\section{A technical Lemma}	\label{Technical_Lemma}

Let us denote by $\mathbb{B}_{\EA}^r$ the ball in $V$ centered in zero with radius $r>0$. The following result provides a criterion for convergence of a sequence in $\strongBall$, where the ball $\mathbb{B}_{\EA}^r$ is equipped with the weak topology. We need this result in the proof of Proposition \ref{CompactnessDeterministic}.
\begin{lemma}\label{convergenceStetigBall}
	Let $(r_N)_{N=1}^\infty$ be a sequence of positive numbers  and $\left(u_n\right)_{n\in\N} \subset \LinftylocEA$ be a sequence with the properties
	\begin{enumerate}
		\item[a)] for every $N \in \mathbb{N}$, $\displaystyle\sup_{n\in\N} \norm{u_n}_{\LinfEA{N}} \le r_N$,
		\item[b)] for every $N \in \mathbb{N}$,  $u_n\to u$ in $\strongEAdual{N}$ for $n\to \infty.$
	\end{enumerate}
	Then, for every $n\in\N$, $u_n,u\in C_w([0,\infty); \EA) $.  Moreover, 
for every $N \in \mathbb{N}$ and  every $n\in\N$, 
 $u_n,u\in \strongball{N}{r_N}$  and 
 \[
 u_n \to u  \mbox{ in }\strongball{N}{r_N} \mbox{ as } n\to \infty.
 \]
\end{lemma}
\begin{proof}This proof is a minor modification of the proof of \cite[Lemma 4.1]{Brz+H+W-2019}.
	The Strauss-Lemma 
(see  \cite[Chapter 3, Lemma 1.4]{Temam_2001})  
and the assumptions guarantee that 	for every  $n\in \N$
	\begin{align*}
	u_n \in C([0,\infty); \EA^\ast) \cap \LinftylocEA \subset C_w([0,\infty); \EA) 
	\end{align*}
 and, for every $N \in \mathbb{N}$ and every  $n\in \N$, 
 \[ \sup_{t\in [0,N]} \norm{u_n(t)}_\EA \le  r_N.
 \]
  Hence, we infer that $u_n\in \strongball{N}{r_N}$ for all $n\in \N$ and $N\in \N$. 
  \\
  Let us now choose and  fix $N\in \N$. Then, for every $h\in \EA$
	\begin{align*}
	\sup_{s\in[0,N]} \left\vert \duality{u_n(s)-u(s)}{h}\right\vert \le \norm{u_n-u}_{C([0,N],\EAdual)} \norm{h}_\EA \to 0,\qquad n\to \infty.
	\end{align*}
Hence by Assumption  a) and the Banach-Alaoglu Theorem we find  a subsequence $\left(u_{n_k}\right)_{k\in\N}$ and $v\in L^\infty(0,N;\EA)$ such that  
\[
u_{n_k} \rightharpoonup^* v \mbox{ in } L^\infty(0,N;\EA).
\]
 Hence,  by the uniqueness of the weak star limit in  $L^\infty(0,N;\EAdual),$ we conclude  $u=v \in L^\infty(0,N;\EA)$ with $\norm{u}_{L^\infty(0,N;\EA)}\le r_N$. \\	
 \\
	Let $\varepsilon>0$ and $h\in \EAdual.$ By the density of $\EA$ in $\EAdual,$ we choose $h_\varepsilon\in \EA$ with $\norm{h-h_\varepsilon}_\EAdual\le \frac{\varepsilon}{4 r}$ and obtain for large $n\in \N$
and all $s\in[0,N]$
	\begin{align*}
	\left\vert \duality{u_n(s)-u(s)}{h}\right\vert&\le
	\left\vert \duality{u_n(s)-u(s)}{h-h_\varepsilon}\right\vert+
	\left\vert \duality{u_n(s)-u(s)}{h_\varepsilon}\right\vert\\
	&\le \norm{u_n(s)-u(s)}_\EA \norm{h-h_\varepsilon}_\EAdual+ \left\vert \duality{u_n(s)-u(s)}{h_\varepsilon}\right\vert\\
	&\le 2 r \frac{\varepsilon}{4 r}+\frac{\varepsilon}{2}=\varepsilon.
	\end{align*}
 This implies that 
	$\sup_{s\in[0,N]} \left\vert \duality{u_n(s)-u(s)}{h}\right\vert \to 0$
	as $n\to \infty$. By the arbitrariness of   $h\in  \EAdual$, we infer that   $u_n \to u$ in $C_w([0,N];\EA).$ Hence by \cite[Lemma A.2]{Brz+H+W-2019} 
we obtain the assertion. The proof of Lemma \ref{convergenceStetigBall} is complete.
\end{proof}

\Addresses

\end{document}